\tikzstyle{NE-lines}=[pattern=north east lines, pattern color=black!45]
\newcommand{\Av}{\mathrm{Av}}
\newcommand{\Sort}{\mathrm{Sort}}
\newcommand{\out}{s_{\sigma}}
\newcommand{\U}{\mathtt{U}} 
\newcommand{\D}{\mathtt{D}} 
\renewcommand{\H}{\mathtt{H}} 
\newcommand{\Do}{\texttt{min}}
\newcommand{\Co}{\texttt{cons}}
\newcommand{\Mesh}{(132, \left\lbrace (0,2),(2,0),(2,1) \right\rbrace)}
\newcommand{\M}{\mathcal{M}}
\newcommand{\meshpatt}{\mu}
\newcommand{\rmost}{\mathrm{rm}}
\newcommand{\lmost}{\mathrm{lm}}
\renewcommand{\top}{\mathrm{top}}
\newcommand{\std}{\mathrm{std}}
\newtheorem{theorem}{Theorem}[section]
\newtheorem{prop}[theorem]{Proposition}
\newtheorem{lemma}[theorem]{Lemma}
\newtheorem{cor}[theorem]{Corollary}
\newtheorem{openpr}[theorem]{Open Problem}
\theoremstyle{definition}
\newtheorem{remark}[theorem]{Remark}
\newcommand\rgf{{\sc rgf}}
\newcommand\rgfs{{\sc rgf}s}
\author{Giulio Cerbai\thanks{Dipartimento di Matematica e Informatica
    ``U.  Dini'', University of Firenze, Firenze, Italy,
    \tt{giulio.cerbai@unifi.it, luca.ferrari@unifi.it}}
  \and Anders Claesson\thanks{Science Institute, University of Iceland, Iceland,
    \tt{akc@hi.is}}
  \and Luca Ferrari$^{\dag}$
  \and Einar Steingr\'{\i}msson$^{\dag ,\!\!}$\thanks{Department of Mathematics and
    Statistics, University of Strathclyde, Glasgow, Scotland,
    {\tt{einar@alum.mit.edu}}.}}
\title{Sorting with pattern-avoiding stacks:\\ the $132$-machine\footnote{G.C. and L.F. are members of the INdAM Research group GNCS;
 they are partially supported by INdAM-GNCS 2020 project
 ``Combinatoria delle permutazioni, delle parole e dei grafi:
 algoritmi e applicazioni''.  E.S. was partially supported by a Leverhulme Research Fellowship.}}
\begin{document}

\maketitle

\begin{abstract}
  This paper continues the analysis of the pattern-avoiding sorting machines
  recently introduced by Cerbai, Claesson and Ferrari~\cite{CCF}.  These
  devices consist of two stacks, through which a permutation is passed in
  order to sort it, where the content of each stack must at all times avoid a
  certain pattern.  Here we characterize and enumerate the set of
  permutations that can be sorted when the first stack is $132$-avoiding,
  solving one of the open problems proposed in~\cite{CCF}. To that end we
  present several connections with other well known combinatorial objects,
  such as lattice paths and restricted growth functions (which encode set
  partitions). We also provide new proofs for the enumeration of some sets of
  pattern-avoiding restricted growth functions and we expect that the tools
  introduced can be fruitfully employed to get further similar results.
\end{abstract}

\section{Introduction}
\thispagestyle{empty}

Pattern-avoiding sorting machines were introduced in a recent paper by
Cerbai, Claesson and Ferrari~\cite{CCF} aiming towards a better understanding
of the problem of sorting permutations with two stacks in series. In the
classical formulation of the Stacksort problem~\cite{Kn}, an input
permutation $\pi=\pi_1\ldots\pi_n$ is scanned from left to right and, when
$\pi_i$ is the current element, either $\pi_i$ is pushed onto the stack or
the top element of the stack is popped and appended to the output. If there 
is a sequence of push and pop operations that produces a sorted output (that
is, the identity permutation), then the input permutation is said to be
\emph{sortable}. There is a well known algorithm, called \emph{Stacksort},
that sorts every sortable permutation. It has two key properties:
\begin{enumerate}
\item the stack is \emph{increasing}, meaning that the elements inside
  the stack are maintained in increasing order (from top to bottom);
\item the algorithm is \emph{right greedy}, meaning that it always
  chooses to perform a push operation as long as the stack remains
  increasing in the above sense; here the expression ``right greedy''
  refers to the usual pictorial representation of this problem, in which
  the input permutation is on the right, the stack is in the middle and
  the output permutation is on the left (see
  Figure~\ref{stacksort_machine}, left).
\end{enumerate}

The notion of pattern avoidance allows us to efficiently characterize the set
of the permutations that can be sorted by \emph{Stacksort}. Let $S_n$ be the
symmetric group over a set of cardinality $n$, consisting of all permutations
of length $n$. Given two permutations $\sigma \in S_k$ and
$\pi =\pi_1 \cdots \pi_n \in S_n$, with $k\leq n$, we say that $\sigma$ is a
\emph{pattern} of $\pi$ when there exist indices
$1\leq i_1 <i_2 <\cdots <i_k \leq n$ such that
$\pi_{i_1}\pi_{i_2}\ldots \pi_{i_k}$ (as a permutation) is isomorphic to
$\sigma$, that is, $\pi_{i_1},\pi_{i_2},\ldots,\pi_{i_k}$ are in the same
relative order of size as the elements of $\sigma$, in which case we write
$\sigma \simeq \pi_{i_1}\pi_{i_2}\ldots \pi_{i_k}$. This notion of patterns
in permutations defines a partial order, and the resulting poset is known as
the \emph{permutation pattern poset}. When $\sigma$ is a pattern of $\pi$, we 
say that $\pi$ \emph{contains} $\sigma$, otherwise $\pi$ \emph{avoids}
$\sigma$. A downset $I$ of the permutation pattern poset, also called a 
\emph{permutation class}, can be described in terms of its minimal excluded
permutations (or, equivalently, the minimal elements of the complementary
upset); these permutations are called the \emph{basis} of $I$. When $B$ is
the basis of $I$ we write $I=\Av(B)$.

Returning to \emph{Stacksort}, it is well known that a permutation is
sortable if and only if it avoids the pattern $231$. As a consequence,
the number of sortable permutations of length $n$ is the $n$-th Catalan
number. Given that describing the set of sortable permutations is rather
manageable in the classical case, one would think that similar results
can be derived by considering a slightly more general version of the
problem, where a second stack is connected in series to the first
one. Despite the many attempts, very few results have been obtained. For
example, Murphy~\cite{M} showed that thus sortable permutations are a class
with infinite basis. To describe the basis and to enumerate the
permutations in question remain open problems.

Due to the toughness of the problem in its full generality, several authors
have considered weaker formulations by introducing some constraints on the
sorting device. In his PhD thesis~\cite{W}, West studied permutations that
can be sorted by two stacks connected in series using a \emph{right greedy
  algorithm}. This is equivalent to making two passes through a
stack. Similarly, Smith~\cite{Sm} considered two stacks in series, where the
first stack is required to be \emph{decreasing}. It is worth noting that, due
to the properties of classical stacksort, the second (final) stack turns out
to be necessarily increasing.

\begin{figure}
  \ \hspace{-2ex}\includegraphics[viewport=0 700 500 800]{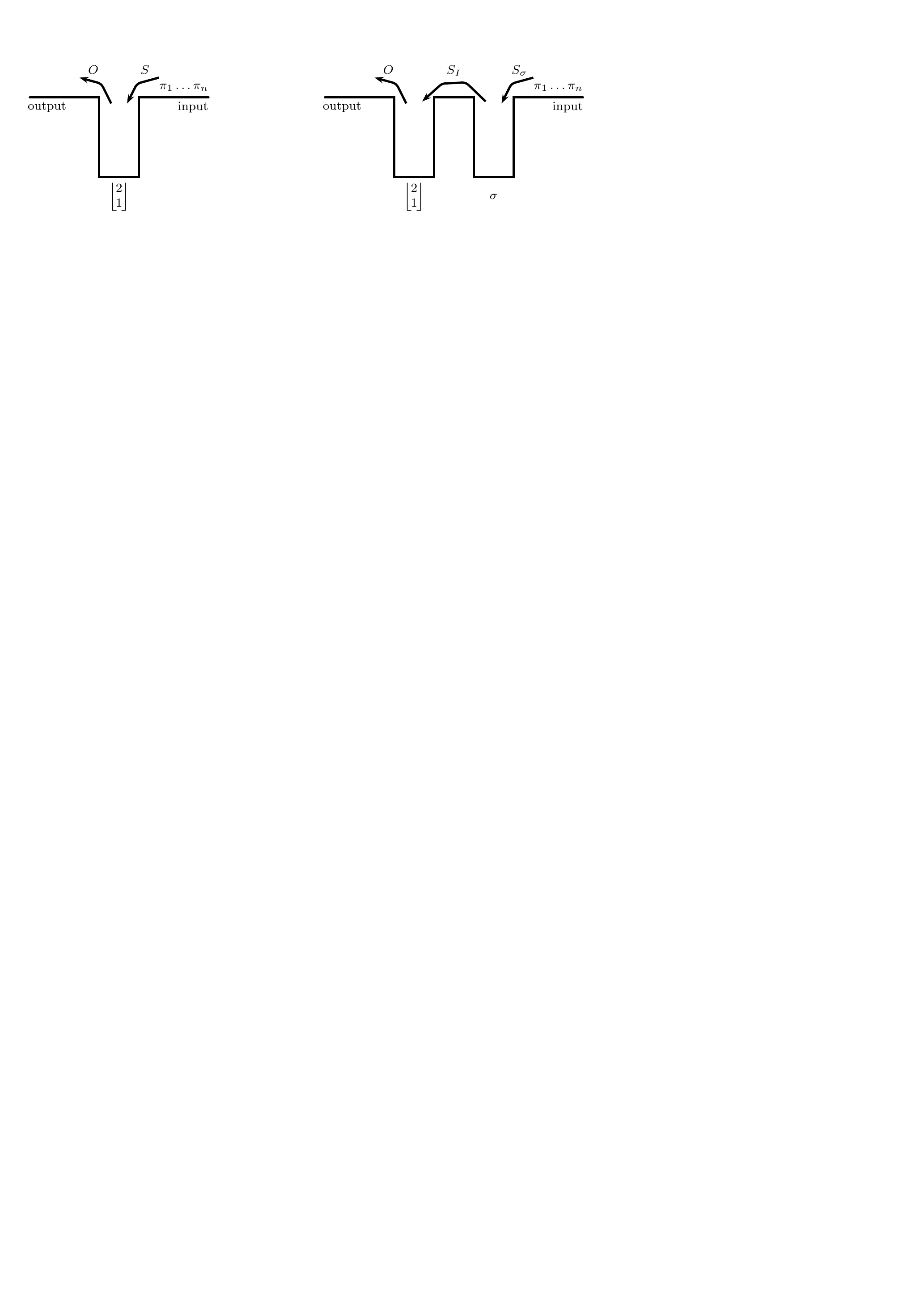}
  \caption{Sorting with one stack (left) and sorting with the
    $\sigma$-machine (right).}\label{stacksort_machine}
\end{figure}

Pattern-avoiding machines constitute a further proposal to approach the
general problem of sorting with two stacks. Let $\sigma$ be a
permutation. The \emph{$\sigma$-machine} consists of two stacks
connected in series (see Figure~\ref{stacksort_machine}, right),
obeying the following constraints:

\begin{enumerate}
\item At each step of the procedure, the elements in each stack must
  avoid certain forbidden configurations, reading from top to
  bottom. The second stack is increasing, that is, the sequence of numbers
  contained in the stack has to avoid the pattern $21$. We express
  this by saying that the stack is $21$-avoiding. In the same spirit,
  the first stack is \emph{$\sigma$-avoiding}.
\item The algorithm performed with the two stacks connected in series is
  \emph{right greedy}. As already observed, this is equivalent to making
  two passes through a stack, performing the right greedy algorithm at
  each pass. However, due to the restriction described above, during the
  first pass the stack is $\sigma$-avoiding, whereas during the second
  pass it is $21$-avoiding.
\end{enumerate}

We refer to the $\sigma$-avoiding stack as the \textit{$\sigma$-stack}.
A permutation $\pi$ is \emph{$\sigma$-sortable} if it is sortable by the
$\sigma$-machine. Denote by $\Sort(\sigma)$ the set of $\sigma$-sortable
permutations and by $\Sort_n(\sigma)$ the set of $\sigma$-sortable
permutations of length $n$. Denote by $s_{\sigma}(\pi )$ the output of
the $\sigma$-stack on input $\pi$. Observe that, since
$s_{\sigma}(\pi)$ is the input to the second (classical) stack, a
permutation $\pi$ is $\sigma$-sortable if and only if $s_{\sigma}(\pi)$
avoids $231$. This fact, which will be frequently used throughout the
paper, allows us to restrict our attention to the behavior of the
$\sigma$-stack when analyzing the sortability of $\pi$.

In~\cite{CCF}, the authors determine the patterns $\sigma$ such that
$\Sort(\sigma)$ is a permutation class, providing explicitly the
corresponding basis.

\begin{theorem}[\cite{CCF}, Theorems 3.2 and 3.4]\label{classes_and_non_classes}
  Let $\sigma=\sigma_1 \sigma_2 \sigma_3 \cdots \sigma_k$ and let
  $\widehat{\sigma}=\sigma_2 \sigma_1 \sigma_3 \cdots \sigma_k$ be the
  permutation obtained by exchanging the first two elements of
  $\sigma$. Then:
  \begin{enumerate}
  \item $\Sort(\sigma)$ is a permutation class if and only if $\widehat\sigma$ contains $231$.
  \item If $\widehat{\sigma}$ contains $231$, then
    $\Sort(\sigma )=\Av(132,\sigma^r)$, where
    $\sigma^r=\sigma_k \cdots \sigma_2 \sigma_1$.
  \end{enumerate}
\end{theorem}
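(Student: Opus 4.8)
The plan is to reduce everything to understanding the output $s_\sigma(\pi)$ of the first stack, since $\pi$ is $\sigma$-sortable exactly when $s_\sigma(\pi)$ avoids $231$. The cornerstone is a simple observation about the $\sigma$-stack: at any moment the elements currently stored, read from top to bottom, form a subsequence of $\pi$ taken in the \emph{reverse} of its input order. Hence the stack content contains $\sigma$ only if $\pi$ contains $\sigma^r$. I would first treat the case where $\pi$ avoids $\sigma^r$: then the $\sigma$-avoidance constraint is never binding, so the right-greedy procedure simply pushes the whole input and afterwards pops it, giving $s_\sigma(\pi)=\pi^r$. Consequently, for $\sigma^r$-avoiding $\pi$, sortability is equivalent to $\pi^r$ avoiding $231$, i.e.\ to $\pi$ avoiding $132$. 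This already proves $\Av(132,\sigma^r)\subseteq\Sort(\sigma)$ and, conversely, that a $\sigma^r$-avoiding permutation containing $132$ is not sortable; note that this step does not use the hypothesis on $\widehat\sigma$.

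Next I would analyse the permutations $\pi$ that do contain $\sigma^r$, and show that under the hypothesis $\widehat\sigma\supseteq 231$ none of them is sortable; together with the previous paragraph this yields $\Sort(\sigma)=\Av(132,\sigma^r)$, and in particular statement (2) and the ``if'' direction of statement (1), since $\Av(132,\sigma^r)$ is visibly a permutation class. Fix an occurrence of $\sigma^r=\sigma_k\cdots\sigma_1$ in $\pi$ and follow the elements $e_1,\dots,e_k$ playing the roles of $\sigma_1,\dots,\sigma_k$. These are pushed in the order $e_k,\dots,e_2,e_1$ and would together read $\sigma$ from top to bottom, so the attempt to push $e_1$ triggers a forced pop. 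Tracking which elements leave the first stack, the key point is that $e_2$ is output before $e_1$, while $e_1$ still sits above $e_3,\dots,e_k$ and is therefore output before them; hence $e_2,e_1,e_3,\dots,e_k$ appear in the output in this order, i.e.\ $s_\sigma(\pi)$ contains the pattern $\widehat\sigma=\sigma_2\sigma_1\sigma_3\cdots\sigma_k$. As $\widehat\sigma$ contains $231$, so does $s_\sigma(\pi)$, and $\pi$ is not sortable.

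The main obstacle is making the bookkeeping of the previous paragraph rigorous: elements of $\pi$ outside the chosen occurrence may be interspersed on the stack, so one must argue that the forced pop really expels $e_2$, and that the relative output order of $e_1,\dots,e_k$ is as claimed. I would handle this by selecting the occurrence of $\sigma^r$ carefully --- for instance one that is leftmost or innermost in a suitable sense --- so that no surviving element sits between the relevant ones at the critical moment. I expect this control of the stack dynamics to be the technical heart of the argument.

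Finally, for the ``only if'' direction of statement (1) I would argue by contraposition. Assuming $\widehat\sigma$ avoids $231$, the same forced-pop analysis now shows that the output produced from an occurrence of $\sigma^r$ contains $\widehat\sigma$ but need not contain $231$; in particular $\sigma^r$ itself is sortable, since $s_\sigma(\sigma^r)=\widehat\sigma$. Exploiting this, I would exhibit an explicit permutation that is $\sigma$-sortable yet contains a pattern that is not, thereby witnessing that $\Sort(\sigma)$ fails to be closed downward and hence is not a class. This direction is the most case-dependent, and I would expect to construct the required witnesses directly from $\sigma$ rather than conceptually.
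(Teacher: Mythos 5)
A preliminary caveat: the paper does not prove this statement at all --- it is imported from~\cite{CCF} (Theorems 3.2 and 3.4) as background --- so there is no internal proof to compare yours against; what follows judges your argument on its own terms.

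Your framework is right, and several pieces are correct and complete: sortability of $\pi$ is equivalent to $s_\sigma(\pi)$ avoiding $231$; if $\pi$ avoids $\sigma^r$ then no push is ever blocked, so $s_\sigma(\pi)=\pi^r$ and sortability reduces to avoiding $132$; and $s_\sigma(\sigma^r)=\widehat{\sigma}$. Even the key claim you aim for --- if $\pi$ contains $\sigma^r$ then $s_\sigma(\pi)$ contains $\widehat{\sigma}$ --- is true. The genuine gap is that your proof of that claim fails: the bookkeeping you describe is false for an occurrence of $\sigma^r$ fixed in advance inside $\pi$, however cleverly chosen. Take $\sigma=2431$, so $\widehat{\sigma}=4231$ contains $231$ and $\sigma^r=1342$, and let $\pi=1\,3\,6\,7\,5\,2\,4$, which contains $\sigma^r$ as the subsequence $3\,6\,7\,4$ (so $e_1=4$, $e_2=7$, $e_3=6$, $e_4=3$). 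Running the machine gives $s_\sigma(\pi)=7\,5\,6\,4\,2\,3\,1$, in which these four elements appear in the order $7,6,4,3\simeq 4321$: forced pops triggered by elements \emph{outside} the occurrence (here $5$ and $2$) expel $e_3=6$ before $e_1=4$ is ever pushed, so your claim that $e_1$ is output before $e_3,\dots,e_k$ fails, and this occurrence contributes neither $\widehat{\sigma}$ nor $231$ to the output. The repair is not a static ``leftmost'' selection; the occurrence must be read off the stack at the critical moment. Concretely: let $e_1$ be the leftmost entry of $\pi$ whose arrival completes an occurrence of $\sigma^r$ (it exists since $\pi$ contains $\sigma^r$, and it triggers the first forced pop, before which nothing has been popped); among all occurrences of $\sigma$ that would be created by pushing $e_1$, choose one whose second element $z$ lies \emph{deepest} in the stack. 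That occurrence is destroyed only when $z$ itself is popped, and any blocking occurrence surviving the removal of $z$ would have a second element deeper than $z$; hence the forced pops stop exactly after $z$ is popped, the elements playing $\sigma_3,\dots,\sigma_k$ are still in the stack below $e_1$ when it is pushed, and the output contains $z$, then $e_1$, then those elements in order --- an occurrence of $\widehat{\sigma}$. This extremal selection and the ``pops stop exactly at $z$'' argument are the technical heart you allude to, and they are absent from the proposal.

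The second gap is the ``only if'' half of (1). The only concrete ingredient you give is that $\sigma^r$ is sortable, but $\sigma^r$ need not contain any non-sortable pattern, so it need not witness the failure of downward closure. For $\sigma=123$ (where $\widehat{\sigma}=213$ avoids $231$) one has $\sigma^r=321$, whose only patterns are $1$, $21$, $321$, all of them $123$-sortable; a valid witness is instead, e.g., $4132$, which is $123$-sortable while containing the non-sortable pattern $132$. Worse, for $\sigma=231$ \emph{every} permutation of length $3$ is sortable (as the paper remarks), including $132$, so a witness must contain a non-sortable pattern of length at least $4$. So the construction of witnesses, which is the entire content of this direction, is missing and is genuinely $\sigma$-dependent, not a routine afterthought.
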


Theorem~\ref{classes_and_non_classes} completely describes the sets of
$\sigma$-sortable permutations that are permutation classes. The
remaining cases are much more challenging. For example, amongst the six
permutations of length three, $\Sort(321)=\Av(123,132)$ as a consequence
of the previous result, but so far the only other solved pattern is
$123$: $123$-sortable permutations are shown to be enumerated by the
partial sums of partial sums of the Catalan numbers (sequence A294790
in~\cite{Sl}) via a bijection with Schr\"oder paths avoiding the pattern 
$\U\H\D$~\cite{CF}. In this paper we deal with one of the remaining patterns of
$S_3$, namely $132$.

In Section~\ref{section_mesh} we characterize $132$-sortable
permutations as those avoiding the classical pattern $2314$ and a
certain mesh pattern.

In Section~\ref{section_grid} we exploit the pattern avoidance
characterization of $\Sort(132)$ to provide a geometrical description of
these permutations. This ultimately allows us to find a recursive
construction for $\Sort(132)$, which is used to provide a bijection between
$\Sort(132)$ and the set of restricted growth functions (\rgfs, to be defined
in next section) avoiding the pattern $12231$. The enumeration of the
12231-avoiding {\rgfs} was obtained by Jel\'{\i}nek and Mansour in~\cite{JM},
where they present a much more general mechanism that determines the entire
Wilf-equivalence class of these avoiders, that is, the class of patterns that
are avoided by the same number of {\rgfs} of each length $n$.  Their counting
sequence is the binomial transform of the Catalan numbers, which is A007317
in the OEIS~\cite{Sl}.

In Section~\ref{section_enum} we exhibit direct combinatorial proofs for
the enumeration of some patterns in the same Wilf-equivalence class as
$12231$. We exhibit links with lattice paths and pattern-avoiding 
permutations. Two of these patterns are enumerated via a bijection with
a family of labeled Motzkin paths, which provides a natural
combinatorial interpretation for a beautiful continued fraction for
A007317. We also conjecture that a slight variation on the same approach
should lead to the enumeration of many other patterns in the same
Wilf-class. Finally, some of the results in this section lead to an
independent proof of the enumeration of $\Sort(132)$.

\section{Preliminaries and notation}\label{prelim}

Given a permutation $\pi=\pi_1\pi_2\ldots\pi_n$, the element $\pi_i$ is
called a \emph{left-to-right maximum} (briefly, \emph{ltr-maximum}) if 
$\pi_i > \max \left\lbrace \pi_1 ,\ldots ,\pi_{i-1} \right\rbrace$.
Analogously, $\pi_i$ is called a \emph{ltr-minimum} if
$\pi_i < \min \left\lbrace \pi_1 ,\ldots ,\pi_{i-1} \right\rbrace$. The
element $\pi_1$ is both an ltr-maximum and ltr-minimum.  A \emph{descent} of 
$\pi$ is a pair of elements $(\pi_i, \pi_{i+1})$ such that
$\pi_i > \pi_{i+1}$.  This is a slight deviation from the classical
definition, in which a descent is an index $i$ such that $\pi_i>\pi_{i+1}$. A
descent is said to be \emph{consecutive} if
$\pi_{i+1}=\pi_{i}-1$. \emph{Ascents} and \emph{consecutive ascents} are
defined similarly. For example, the permutation $\pi=3417625$ has three
ltr-maxima, namely $3,4,7$ and two ltr-minima $3,1$. The descents of $\pi$ are
$(4,1),(7,6),(6,2)$, where only $(7,6)$ is a consecutive descent. The ascents
are $(3,4),(1,7),(2,5)$ and only $(3,4)$ is consecutive.

Given two permutations $\alpha=\alpha_1 \ldots \alpha_n$ and
$\beta=\beta_1 \ldots \beta_m$, the \emph{direct sum} $\alpha \oplus \beta$
is the permutation $\pi=\pi_1 \ldots \pi_n \pi_{n+1} \ldots \pi_{n+m}$ of
length $n+m$ such that $\pi_1 \ldots \pi_n \simeq \alpha$,
$\pi_{n+1} \ldots \pi_{n+m} \simeq \beta$ and $\pi_i < \pi_j$, for each
$i \in \lbrace 1,\dots,n \rbrace$ and $j \in \lbrace n+1, \dots,n+m
\rbrace$.
The \emph{skew sum} $\alpha \ominus \beta$ is defined similarly, but
requiring that $\pi_i > \pi_j$ for each $i \in \lbrace 1,\dots,n \rbrace$ and
$j \in \lbrace n+1, \dots,n+m \rbrace$. For example, $213\oplus21=21354$ and  
$213\ominus21=43521$. A permutation is said to be \emph{layered} if it is
the direct sum of decreasing permutations. It is well known that $\pi$ is
layered if and only if $\pi \in \Av(231,312)$ and there are $2^{n-1}$ layered
permutations of length~$n$.

A \emph{Dyck path} is a path in the discrete plane
$\mathbb{Z}\times \mathbb{Z}$ starting at the origin of a fixed
Cartesian coordinate system, ending on the $x$-axis, never falling below
the $x$-axis and using two kinds of steps, namely
upsteps $\U=(1,1)$ and downsteps $\D=(1,-1)$. The \emph{length} of a Dyck path
is its final abscissa, which coincides with the total number of its
steps. See Figure~\ref{Dyck_and_mesh_figure} for an example of Dyck
path. According to their semilength, Dyck paths are counted by Catalan
numbers (sequence A000108 in~\cite{Sl}). The $n$-th \emph{Catalan number} is
$\mathfrak{c}_n =\frac{1}{n+1}{2n\choose n}$ and the associated ordinary 
generating function is $C(x)=(1-\sqrt{1-4x})/(2x)$. A slightly more general
notion of lattice path is obtained by allowing one more kind of step, the 
horizontal step $\H=(1,0)$. The resulting paths are called \emph{Motzkin
  paths} and their enumeration (with respect to the total number of steps) is
given by the Motzkin numbers (sequence A001006 in~\cite{Sl}).

A \emph{Restricted Growth Function} ({\rgf}) of length $n$ is a sequence of
positive integers $R=r_1 \cdots r_n$ such that $r_1=1$ and
$r_i \le 1+\max \left\lbrace r_1,\dots,r_{i-1} \right\rbrace$ for each
$i \ge 2$. The {\rgfs} of length $n$ bijectively encode set partitions of
$[n]=\{1,2,\ldots,n\}$, where, for example, the partition of $[5]$ written in
standard notation as 13--25--4 has {\rgf} 12132, whose 3 in place 4 indicates
that 4 is in the third block.

Denote by $\mathcal{R}_n$ the set of {\rgfs} of length $n$ and let
$\mathcal{R}=\bigcup_{n \ge 1} \mathcal{R}_n$. The notion of pattern
avoidance can be naturally extended to {\rgfs}. Given a sequence of positive
integers $Q=q_1 q_2 \cdots q_k$, define the \emph{standardization} of $Q$ as
the string $\std(Q)$ obtained by replacing all occurrences of the $i$-th
smallest element with~$i$, for all $i$. Then, given a {\rgf}
$R=r_1 \ldots r_n$ and a sequence of positive integers $Q=q_1 \ldots q_k$,
with $k \le n$, $Q$ is a pattern of $R$ if there is a subsequence
$r_{i_1} \ldots r_{i_k}$ of $R$ such that $\std(r_{i_1} \ldots r_{i_k})=Q$.
In this case we write $Q \le R$ (and say that $R$ \emph{contains} $Q$);
otherwise, we say that $R$ \emph{avoids} $Q$. We use the notation
$\mathcal{R}(Q)$ to denote the set of the {\rgfs} avoiding $Q$ and
$\mathcal{R}_n(Q)=\mathcal{R}_n \cap \mathcal{R}(Q)$. For a more detailed
survey on the notion of pattern avoidance in {\rgfs}, we refer the reader
to~\cite{JM} and~\cite{CDDGGPS}. Observe that if $R$ is a {\rgf} then each
occurrence of the integer $k$ in $R$, for any $k \ge 1$, is preceded by some
occurrence of all the integers $1,\dots,k-1$. A useful consequence is the
following lemma, whose easy proof is omitted.

\begin{lemma}\label{RGF_prop}
Let $R$ be a {\rgf} and let $Q=q_1 q_2 \ldots q_k$ be a sequence of positive
integers. Let $Q'=\std(Q)=q'_1 \ldots q'_k$ and suppose that $q'_1=t$, for some $t\ge 1$.
Then $Q'\le R$ if and only if $12\dots(t-1)Q'\le R$.
\end{lemma}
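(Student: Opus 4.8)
The plan is to prove the two implications separately; the forward one is immediate, and the reverse one is where the {\rgf} structure is essential. For the ``if'' direction, suppose $12\dots(t-1)Q'\le R$, witnessed by a subsequence $w$ of $R$ with $\std(w)=12\dots(t-1)Q'$. Deleting the first $t-1$ entries of $w$ leaves a subsequence $w'$ of $R$, and since standardization commutes with restriction to a fixed set of positions (that is, $\std(u|_P)=\std((\std u)|_P)$ for any word $u$), the value $\std(w')$ equals the standardization of the last $k$ entries of $12\dots(t-1)Q'$, namely $\std(Q')=Q'$. Hence $Q'\le R$, with no appeal to the {\rgf} hypothesis.

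For the ``only if'' direction, assume $Q'\le R$ and fix positions $i_1<\dots<i_k$ with $\std(r_{i_1}\cdots r_{i_k})=Q'$. Since $q'_1=t$, the entry $r_{i_1}$ is the $t$-th smallest distinct value occurring among $r_{i_1},\dots,r_{i_k}$; let $a_1<a_2<\dots<a_{t-1}$ denote the $t-1$ distinct values of this subsequence that are strictly smaller than $r_{i_1}$. The goal is to locate positions $j_1<\dots<j_{t-1}<i_1$ in $R$ with $r_{j_\ell}=a_\ell$, for then the subsequence $r_{j_1}\cdots r_{j_{t-1}}r_{i_1}\cdots r_{i_k}$ introduces no values beyond those already present in $r_{i_1}\cdots r_{i_k}$. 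Consequently its standardization leaves the internal order of the $Q'$-block unchanged while turning the increasing prefix $a_1<\dots<a_{t-1}$ into $12\dots(t-1)$, so that the whole word standardizes to $12\dots(t-1)Q'$, giving $12\dots(t-1)Q'\le R$.

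The crux is producing these increasing positions, and this is exactly where the defining property of an {\rgf} enters. Because $R$ is an {\rgf}, the first occurrences of the values $1,2,3,\dots$ appear in strictly increasing order of position; writing $f(a)$ for the position of the first occurrence of the value $a$ in $R$, one has $f(1)<f(2)<f(3)<\cdots$. Each $a_\ell$ satisfies $a_\ell<r_{i_1}$, and since $r_{i_1}$ occurs at position $i_1$ we have $f(r_{i_1})\le i_1$; monotonicity of $f$ then yields $f(a_1)<\dots<f(a_{t-1})<f(r_{i_1})\le i_1$. Setting $j_\ell=f(a_\ell)$ supplies positions with both the required ordering and the required values, completing the argument. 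I expect the only point needing genuine care to be the verification that prepending these occurrences does not disturb the standardization of the $Q'$-block, which holds precisely because each $a_\ell$ already belongs to the value set of $r_{i_1}\cdots r_{i_k}$.
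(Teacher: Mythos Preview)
The paper omits the proof of this lemma entirely (``whose easy proof is omitted''), so there is no argument to compare yours against. Your proof is correct: the ``if'' direction follows from the general fact that standardization respects restriction to subsequences, and the ``only if'' direction correctly exploits the {\rgf} property that first occurrences of the values $1,2,3,\dots$ appear in increasing order of position, allowing you to prepend occurrences of $a_1<\dots<a_{t-1}$ before $i_1$ without introducing new values (and hence without disturbing the standardization of the tail).
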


\section{Pattern avoidance characterization of
  $\Sort(132)$}\label{section_mesh}

\emph{For the remainder of this paper, we let $\sigma=132$.}\medskip

In this section we characterize $\Sort(\sigma)$ in terms of pattern
avoidance. First we need to introduce a slightly more general notion of
pattern, originally given by Br\"and\'en and Claesson in~\cite{BC}. A
\emph{mesh pattern} of length $k$ is a pair $(\tau,A)$, where
$\tau \in S_k$ and
$A \subseteq \left[ 0,k \right] \times \left[ 0,k \right]$ is a set of
pairs of integers. The elements of $A$ identify the lower left corners
of forbidden squares in the plot of $\tau$ (see
Figure~\ref{Dyck_and_mesh_figure}). An occurrence of the mesh pattern
$(\tau,A)$ in $\pi$ is then an occurrence of the classical pattern
$\tau$ in $\pi$ such that no elements of $\pi$ are placed into a
forbidden square of $A$.

\begin{figure}
  \begin{center}
    \begin{tikzpicture}[scale=0.5, baseline=20.5pt]
      \draw [ultra thin] (0,0) -- (18,0);
      \draw [semithick] (0,0) -- (1,1);
      \draw [semithick] (1,1) -- (2,0);
      \draw [semithick] (2,0) -- (6,4);
      \draw [semithick] (6,4) -- (8,2);
      \draw [semithick] (8,2) -- (9,3);
      \draw [semithick] (8,2) -- (9,3);
      \draw [semithick] (9,3) -- (10,2);
      \draw [semithick] (10,2) -- (11,3);
      \draw [semithick] (11,3) -- (14,0);
      \draw [semithick] (14,0) -- (16,2);
      \draw [semithick] (16,2) -- (18,0);
      \filldraw(0,0) circle (6pt);
      \filldraw(1,1) circle (6pt);
      \filldraw(2,0) circle (6pt);
      \filldraw(3,1) circle (6pt);
      \filldraw(4,2) circle (6pt);
      \filldraw(5,3) circle (6pt);
      \filldraw(6,4) circle (6pt);
      \filldraw(7,3) circle (6pt);
      \filldraw(8,2) circle (6pt);
      \filldraw(9,3) circle (6pt);
      \filldraw(10,2) circle (6pt);
      \filldraw(11,3) circle (6pt);
      \filldraw(12,2) circle (6pt);
      \filldraw(13,1) circle (6pt);
      \filldraw(14,0) circle (6pt);
      \filldraw(15,1) circle (6pt);
      \filldraw(16,2) circle (6pt);
      \filldraw(17,1) circle (6pt);
      \filldraw(18,0) circle (6pt);
      \node[above,left] at (0.75,0.75) {$1$};
      \node[above,left] at (2.75,0.75) {$1$};
      \node[above,left] at (3.75,1.75) {$2$};
      \node[above,left] at (4.75,2.75) {$3$};
      \node[above,left] at (5.75,3.75) {$4$};
      \node[above,left] at (8.75,2.75) {$2$};
      \node[above,left] at (10.75,2.75) {$2$};
      \node[above,left] at (14.75,0.75) {$4$};
      \node[above,left] at (15.75,1.75) {$5$};
    \end{tikzpicture}
    \hspace{1cm}
    \begin{tikzpicture}[scale=0.5, baseline=20.5pt]
      \fill[NE-lines] (0,2) rectangle (1,3);
      \fill[NE-lines] (2,0) rectangle (3,1);
      \fill[NE-lines] (2,1) rectangle (3,2);
      \draw [semithick] (0.001,0.001) grid (3.999,3.999);
      \filldraw (1,1) circle (6pt);
      \filldraw (2,3) circle (6pt);
      \filldraw (3,2) circle (6pt);
    \end{tikzpicture}
  \end{center}
  \caption{A Dyck path (on the left) and the mesh pattern
    $\meshpatt=\Mesh$ (on the right).}\label{Dyck_and_mesh_figure}
\end{figure}

We start by proving a useful decomposition lemma for $\sigma$-sortable
permutations.  Given a permutation $\pi$ we decompose it as
$\pi=m_1 B_1 m_2 B_2 \ldots m_k B_k$, where
$m_1 \ge m_2 \ge \cdots \ge m_k=1$ are the ltr-minima of $\pi$ and each block 
$B_i$ contains all the elements strictly between two consecutive ltr-minima.
We refer to this as the \emph{ltr-minima decomposition} of $\pi$.

\begin{lemma}\label{ltr_min_sortable}
  Let $\pi$ be a permutation and let
  $\pi=m_1 B_1 m_2 B_2 \ldots m_k B_k$ be its ltr-minima
  decomposition. Then:
  \begin{enumerate}
  \item
    $\out(\pi)=\widetilde{B_1} \widetilde{B_2}\cdots \widetilde{B_k} m_k m_{k-1}
    \cdots m_2 m_1$, where each $\widetilde{B_i}$ is a suitable rearrangement of
    the elements of $B_i$.
  \item If $\pi$ is $\sigma$-sortable, then $x>y$ for each $x \in B_i$,
    $y \in B_j$, with $i<j$.
  \end{enumerate}
\end{lemma}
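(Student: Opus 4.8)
The plan is to prove part (1) by running the $132$-stack through one full pass and maintaining an invariant on its content, and then to derive part (2) from part (1) together with the criterion ``$\pi$ is $\sigma$-sortable if and only if $\out(\pi)$ avoids $231$''. First I would record the basic push rule: writing the stack top-to-bottom as $s_1\cdots s_m$, a new element $x$ may be pushed on top while keeping the stack $132$-avoiding exactly when there is no pair of positions $a<b$ with $x<s_b<s_a$; indeed $s_1\cdots s_m$ already avoids $132$ and $x$ becomes the topmost (leftmost) entry, so any new occurrence must use $x$ as the ``$1$''. In the special case where $x$ is smaller than every element currently in the stack, this says that $x$ can be pushed precisely when $s_1\cdots s_m$ is strictly increasing from top to bottom (has no descent). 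I would also note the elementary fact that $x>m_i$ for every $x\in B_i$: such an $x$ occurs after the ltr-minimum $m_i$ and is itself not a ltr-minimum, so it exceeds the current running minimum.

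The heart of the argument is the following invariant, proved by induction on $i$: immediately after $m_i$ is pushed, the stack read top-to-bottom is exactly $m_i m_{i-1}\cdots m_1$, and all of $B_1,\dots,B_{i-1}$ has already been output. The base case $i=1$ is clear. For the inductive step I would observe two things. While $B_i$ is being read, the core $m_i\cdots m_1$ at the bottom is increasing top-to-bottom (the ltr-minima are read in decreasing order of value, so the most recent and smallest sits highest), and each incoming $x\in B_i$ satisfies $x>m_i$; hence once the block elements lying above the core are cleared, $x$ can always be pushed onto $m_i\cdots m_1$, so the right-greedy algorithm is never forced to pop any $m_\ell$. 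When the next ltr-minimum $m_{i+1}$ is read, it is smaller than everything in the stack, so by the push rule we pop until the stack is increasing top-to-bottom; but any surviving element of $B_i$ sits above $m_i$ and exceeds it, creating a descent, so the popping continues until every block element has been expelled, leaving exactly $m_i\cdots m_1$, onto which $m_{i+1}$ is then pushed. This gives the invariant for $i+1$ and shows that the elements of $B_i$ are output (in some order $\widetilde{B_i}$) after those of $B_{i-1}$ and before $m_{i+1}$ enters. Applying the same reasoning to $B_k$ and then emptying the stack at the end of input, the leftover block elements leave first and the minima are popped last, from top to bottom, as $m_k m_{k-1}\cdots m_1$; this is exactly the claimed form of $\out(\pi)$.

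For part (2) I would argue by contradiction using that sortability is equivalent to $\out(\pi)$ avoiding $231$. Suppose $x\in B_i$ and $y\in B_j$ with $i<j$ but $x<y$. By part (1) the blocks appear in $\out(\pi)$ in the order $\widetilde{B_1},\dots,\widetilde{B_k}$ followed by the minima, so $x$ precedes $y$; moreover $m_i<x$ (since $x\in B_i$) and $m_i$ sits among the trailing minima, hence after $y$. Then $x,y,m_i$ occur in this left-to-right order in $\out(\pi)$ with $y>x>m_i$, forming a $231$ pattern and contradicting sortability.

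The hard part will be the inductive claim that reading a new ltr-minimum flushes exactly the current block elements and nothing more: here I must combine the precise top-to-bottom orientation, the ``descent'' reformulation of the push rule, the increasing-core behaviour of the minima, and the greedy (pop-only-what-is-necessary) discipline, while checking carefully that no $m_\ell$ is ever popped prematurely.
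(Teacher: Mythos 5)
Your proof is correct and takes essentially the same approach as the paper's: both arguments show that the $132$-restriction forces every block $B_i$ to be flushed from the stack before the next ltr-minimum enters (via the same forbidden triple $m_{i+1}\,x\,m_i \simeq 132$ read top to bottom), that the ltr-minima themselves are never popped until the input is exhausted, and both derive part (2) from part (1) by exhibiting an occurrence $x\,y\,m \simeq 231$ in $s_{\sigma}(\pi)$. Your write-up merely makes explicit (via the push-rule characterization and the stack invariant) what the paper's terser proof leaves implicit about the right-greedy discipline.
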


\begin{proof}
  \begin{enumerate}
  \item For each $x \in B_1$, $m_1 x m_2 \simeq 231$, thus every element of
    $B_1$ has to be popped from the $\sigma$-stack before $m_2$ enters. After
    that, we have $m_1$ and $m_2$ on the $\sigma$-stack, with $m_1 > m_2$ and
    $m_2$ above $m_1$. Note that they cannot both be part of a $132$,
    therefore $m_2$ remains on the $\sigma$-stack until the end of the
    sorting process. Similarly, each element of $B_2$ has to be popped
    before~$m_3$ enters, since $m_3xm_2\simeq 132$ for each $x \in B_2$. The
    same argument holds for every $m_j$ with $j\ge2$.
  \item Suppose there are two elements $x,y$ such that $x<y$,
    $x \in B_i$ and $y \in B_j$, with $i<j$. Then, as a consequence of
    the previous item, $x y m_k$ is an occurrence of $231$ in
    $\out{(\pi)}$, which is a contradiction since $\pi$ is $\sigma$-sortable.\qedhere
  \end{enumerate}
\end{proof}

\begin{lemma}\label{unimodal_stack}
  Let $\pi \in \Sort_n(\sigma)$ and let
  $\pi=m_1 B_1 m_2 B_2 \cdots m_k B_k$ be its ltr-minima
  decomposition. Then, when the next element of the input is
  $b \in B_i$ the content of the $\sigma$-stack when read from 
  bottom to top is $m_1 m_2 \cdots m_i b_1 b_2 \cdots b_t$, where 
  $\{ b_1 ,\ldots b_t \}$ is a (possibly empty) subset of $B_i$ such
  that $b_1 < b_2 < \cdots < b_t$.
\end{lemma}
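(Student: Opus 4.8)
The plan is to prove the statement by induction on the number of input letters already read, taking as invariant exactly the claimed shape of the $\sigma$-stack: at every moment at which the next input letter is some $b\in B_i$, the stack reads (bottom to top) $m_1\cdots m_i b_1\cdots b_t$ with $b_1<\cdots<b_t$ elements of $B_i$. The statement of the lemma is then just this invariant, and the work is to show that reading one more letter preserves it.

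For the base case and for the passage across a block boundary I would lean on Lemma~\ref{ltr_min_sortable}: its proof already shows that each ltr-minimum $m_j$ stays on the $\sigma$-stack until the end and that every element of $B_{j-1}$ is popped before $m_j$ is pushed (when $m_j$ is the next input, pushing it on top of a leftover element of $B_{j-1}$ would create a $132$ together with $m_{j-1}$, so the right-greedy rule empties the block first). Hence just after $m_i$ has been pushed the stack is exactly $m_1\cdots m_i$, which is the case $t=0$ where the processing of $B_i$ begins; in particular $m_1$ alone handles the base case $i=1$.

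The heart of the argument is the inductive step inside a block. Assume the stack is $m_1\cdots m_i b_1\cdots b_t$ with $b_1<\cdots<b_t$, and let $b\in B_i$ be the next letter. Since every $m_j$ is smaller than every $b_l$ and the $m_j$ sit at the bottom (increasing upward) while the $b_l$ decrease from top to bottom, a short case check shows that appending $b$ on top produces an occurrence of $132$ (read top to bottom) \emph{precisely} when at least two of the $b_l$ exceed $b$; the $m_j$, being the global minima read so far, can never participate in such an occurrence. Thus the right-greedy rule pops the stacked elements larger than $b$ (largest first) and stops as soon as at most one stacked $b_l$ still exceeds $b$. The key point is to forbid the configuration in which exactly one element $b^\ast>b$ survives directly below $b$. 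I claim this cannot occur for a $\sigma$-sortable $\pi$: if it did, then $b$ would sit above $b^\ast$, so $b$ would be popped before $b^\ast$, and both would be output before $m_k=1$; since $1<b<b^\ast$, the triple $(b,b^\ast,1)$ would be an occurrence of $231$ in $\out(\pi)$, contradicting sortability. Hence no stacked element exceeds $b$, i.e. $b>b_t$, and $b$ is simply pushed, leaving the stack as $m_1\cdots m_i b_1\cdots b_t b$ with its top block still increasing. This closes the induction.

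The main obstacle is exactly this last point: rather than tracking the right-greedy popping in detail, the clean move is to observe that the ``one surviving larger element'' configuration, once reached, freezes the relative output order of $b$ and $b^\ast$ and forces a $231$ in $\out(\pi)$ through the final minimum $m_k=1$. The only facts needing care are the reading convention for $132$ in the stack (top to bottom) and the comparisons $m_j<b_l$ and $1<b<b^\ast$, all of which follow from the definition of ltr-minima, the block decomposition, and Lemma~\ref{ltr_min_sortable}.
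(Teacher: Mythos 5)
Your induction scheme and your ``key point'' are fine, but the case analysis driving the inductive step is wrong, and it is wrong at exactly the place you identify as the heart of the argument. You claim that every $m_j$ is smaller than every $b_l$, hence that the minima can never take part in an occurrence of $132$ inside the stack, hence that pushing $b$ creates such an occurrence \emph{precisely} when at least two stacked $b_l$ exceed $b$. But only $m_i$ is guaranteed to be smaller than all elements of $B_i$; an earlier minimum $m_j$ with $j<i$ can perfectly well satisfy $b<m_j<b_l$, and then $b\,b_l\,m_j$, read from top to bottom, is an occurrence of $132$. Take $\pi=3142\in\Sort(132)$: here $m_1=3$, $m_2=1$, $B_2=\{4,2\}$. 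When the next input is $2$ the stack reads $3,1,4$ from bottom to top, and only one stacked block element ($4$) exceeds $2$; nevertheless pushing $2$ would create the occurrence $2\,4\,3$ of $132$, so the machine pops $4$ and only then pushes $2$ (indeed $\out(3142)=4213$, which avoids $231$, so $3142$ is sortable). Under your description the machine would instead push $2$ on top of $4$, after which your key point would ``show'' that $3142$ is not sortable. So both the biconditional and your account of when the popping stops are false, and as written the inductive step does not go through.

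The damage is repairable, because your conclusion needs only the true half of the claim: if at least two stacked elements exceed $b$, a $132$ is certainly created, so at the moment $b$ is actually pushed at most one larger stacked element can remain below it, and that lone survivor is then excluded by your (correct) $231$-in-the-output argument. After this repair your proof works, but it then essentially collapses onto the paper's proof, which skips the dynamics entirely: by Lemma~\ref{ltr_min_sortable} the minima $m_1,\dots,m_i$ lie at the bottom of the stack, and if the stacked elements of $B_i$ contained an inversion $b_h>b_\ell$ with $b_h$ below $b_\ell$, then $\out(\pi)$ would contain $b_\ell\, b_h\, m_i\simeq 231$, contradicting sortability. No tracking of individual pops is needed, and attempting to track them is precisely where your argument broke.
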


\begin{proof}
  The first $i$ ltr-minima $m_1,\dots,m_i$ of $\pi$ lie at the bottom of the
  $\sigma$-stack, by Lemma~\ref{ltr_min_sortable}. Then the remaining
  elements $b_1,\dots,b_t$ of $B_i$ in the $\sigma$-stack must be in increasing order
  from bottom to top, for otherwise, if $b_h > b_\ell$ for some $h<\ell$,
  then $\out{(\pi)}$ would contain $b_\ell b_h m_i \simeq 231$, contradicting
  the $\sigma$-sortability of $\pi$.
\end{proof}

We next show that $\sigma$-sortable permutations are characterized by
the avoidance of a classical pattern and a mesh pattern.  This leads to
a more precise geometrical description of these permutations, as we will
show in the next section. For the rest of the paper, let
$\meshpatt=\Mesh$ be the mesh pattern depicted in
Figure~\ref{Dyck_and_mesh_figure}. An occurrence of the mesh pattern
$\meshpatt$ is thus an occurrence $acb$ of the classical pattern $132$
such that:
\begin{itemize}
\item every element that precedes $a$ in $\pi$ is either smaller than
  $b$ or greater than~$c$;
\item every element between $c$ and $b$ in $\pi$ is greater than $b$.
\end{itemize}

\begin{theorem}\label{mesh_patterns_necessary}
  If $\pi$ is $\sigma$-sortable, then $\pi \in \Av(2314,\meshpatt)$.
\end{theorem}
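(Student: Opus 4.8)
The plan is to prove the contrapositive for each forbidden pattern: I will show that if $\pi$ contains $2314$, or if $\pi$ contains the mesh pattern $\meshpatt$, then the output $\out(\pi)$ of the $\sigma$-stack contains $231$, so that $\pi$ is not $\sigma$-sortable by the characterization recalled in the introduction ($\pi$ is $\sigma$-sortable if and only if $\out(\pi)$ avoids $231$). The engine is the explicit form of the right-greedy rule for the $132$-avoiding stack: when the current input element is $e$ and the stack reads $s_1\cdots s_m$ from bottom to top, $e$ is pushed unless doing so would create a $132$, which happens exactly when there are positions $\alpha<\beta$ (counted from the bottom) with $e<s_\alpha<s_\beta$; in that case the current top is popped and appended to the output, and the test is repeated. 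Thus $e$ comes to rest just above the highest element below which no ascent of values exceeding $e$ survives, and every entry is emitted at the latest when the input is exhausted.

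For the classical pattern $2314$, I fix an occurrence $wxyz$ with $y<w<x<z$. Reading these entries in order, $w$ is pushed and then $x$ is pushed above it; when the much smaller $y$ arrives, the pair $w<x$ sitting below it forces the pop of $x$, since the triple $y<w<x$ would otherwise create the forbidden $132$. Hence $x$ is emitted before $y$ is even consumed, and \emph{a fortiori} before $z$. Finally $z$, being the largest of the four, is pushed above $w$, which lies further down, and so $z$ is emitted before $w$. Consequently $\out(\pi)$ contains the subsequence $x,z,w$, and since $w<x<z$ this is an occurrence of $231$.

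For the mesh pattern $\meshpatt$, I fix an occurrence $acb$ of $132$ (so $a<b<c$, occurring in this left-to-right order) satisfying the two shading conditions stated before the theorem. The target is to prove that when $b$ is read the entries $a$ and $c$ both sit on the stack with $a$ below $c$, and that $b$ is then pushed \emph{above} $c$; this yields on the stack, from bottom to top, the subsequence $a\cdots c\cdots b$ with $a<c$ and $c>b$, whence $c$ is emitted before $a$ and $b$ before $c$, so that $\out(\pi)$ contains $b,c,a$, an occurrence of $231$. Here the two shading conditions do the essential work: the condition that every entry between $c$ and $b$ exceeds $b$ guarantees that no entry read strictly between $c$ and $b$ is small enough to dislodge $c$ or $a$, so $c$ is still present, above $a$, when $b$ arrives; and the condition that no entry preceding $a$ lies strictly between $b$ and $c$ is calibrated to forbid the kind of ascent of values in $(b,c)$ that would otherwise cause $b$ to pop $c$ rather than land above it.

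The main obstacle in both cases is identical: the emission order of the handful of entries in the fixed occurrence must be certified \emph{despite} all the other entries of $\pi$, which are pushed and popped in between. I expect the cleanest route to be to argue by contradiction, adding the standing hypothesis that $\pi$ is $\sigma$-sortable and invoking Lemma~\ref{unimodal_stack}, which pins the stack to the valley shape $m_1\cdots m_i\,b_1\cdots b_t$ at every step. A valley, read from bottom to top, contains no subsequence that first rises and then falls, so the appearance of the ``mountain'' subsequence $a\cdots c\cdots b$ (with $a<c>b$) contradicts the lemma outright, and likewise the $2314$ trace is forced into a configuration incompatible with either the valley shape or the $231$-avoidance of $\out(\pi)$. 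Turning the two shading conditions of $\meshpatt$ into precise statements about which entries may occupy the stack, and at which moments, using Lemma~\ref{ltr_min_sortable} to locate $a$, $c$, $b$ within the ltr-minima decomposition, is the technical heart of the argument.
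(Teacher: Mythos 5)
Your high-level plan (argue by contradiction, assuming sortability, for each of the two forbidden patterns) is the same as the paper's, but the concrete mechanism you propose does not survive interference from the other entries of $\pi$, and the failures are not edge cases. For the pattern $2314$, all of your trajectory claims can fail at once: take $\pi=241536$ with the occurrence $w\,x\,y\,z=4\,5\,3\,6$. The arrival of $1$ pops $w=4$ (the triple $1<2<4$ blocks the push), so $4$ is gone before $x=5$ is even pushed and $x$ never sits above $w$; when $y=3$ arrives it is pushed \emph{on top of} $5$, so $x$ is emitted after $y$; and the run of the $\sigma$-stack gives $\out(\pi)=463512$, in which $w=4$ precedes $z=6$, so your claimed witness $x,z,w=5,6,4$ does not occur as a subsequence at all. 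The paper's proof is robust precisely because it claims much less: only that \emph{some} element with value between $w$ and $x$ must be popped before $y$ is pushed (otherwise the stack would read $y\cdots x\cdots w\simeq 132$ from top to bottom), and it then builds the $231$ in the output from that popped element, $z$, and the global minimum $m_k$, which by Lemma~\ref{ltr_min_sortable} exits after every non-minimum. In the example this witness is $4,6,1$, which indeed occurs in $463512$.

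The mesh-pattern half has a deeper problem: the configuration you aim for ($a$ and $c$ still on the stack when $b$ arrives, with $b$ landing on top, a ``mountain'' contradicting Lemma~\ref{unimodal_stack}) provably need not arise, because the shading of $\meshpatt$ places no constraint on the region between $a$ and $c$. Take $\pi=156432$ with the mesh occurrence $a\,c\,b=1\,6\,3$ (nothing precedes $a$, and the only entry between $c$ and $b$ is $4>b$, so the shading conditions hold). The entry $5$ lies between $a$ and $c$ and sits under $6$ in the stack; when $4$ arrives, the triple $4<5<6$ forces $6$ to be popped, so $c$ is gone long before $b$ is read and no mountain ever forms. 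Indeed, under the sortability hypothesis the truth is the \emph{opposite} of what you want: the paper first proves that $c$ must be popped before $b$ enters (else $b\,c\,m(a)\simeq 231$ would appear in $\out(\pi)$), and then uses that fact, together with a case analysis on the blocks containing $b$ and $c$ and on the entry immediately preceding $b$, to exhibit an element lying inside a shaded box, contradicting the assumption that $a\,c\,b$ is an occurrence of $\meshpatt$. That case analysis is exactly what you defer to as ``the technical heart,'' and without it --- or with your pop-resistance claim, which is false --- the argument does not go through.
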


\begin{proof}
  Let $\pi=m_1 B_1 m_2 B_2 \cdots m_k B_k$ be the ltr-minima decomposition of
  $\pi$. Suppose, for a contradiction, that $\pi$ contains an occurrence
  $bcad$ of $2314$. When $a$ enters the $\sigma$-stack, at least one element
  between $b$ and $c$, call it $x$, has already been popped from the
  $\sigma$-stack, otherwise we would get the forbidden pattern
  $acb \simeq 132$ inside the
  $\sigma$-stack. 
  Hence, by Lemma~\ref{ltr_min_sortable}, $\out{(\pi)}$ contains
  $x d m_k \simeq 231$, violating the hypothesis that $\pi$ is
  $\sigma$-sortable.

  Next suppose that $acb$ is an occurrence of $132$ in $\pi$. We wish to show
  that $acb$ is part of an occurrence of either $\mathbf{3}142$,
  $24\mathbf{1}3$ or $14\mathbf{2}3$, thus proving that $\pi$ avoids the mesh
  pattern $\meshpatt$. Let $m(a)$ be the ltr-minimum of the block that
  contains $a$ (in particular, $m(a)=a$ if $a$ is a ltr-minimum itself). Then
  $m(a) \le a$ and $m(a)$ exits the $\sigma$-stack after $b$ and $c$ (by
  Lemma~\ref{ltr_min_sortable}), so $c$ has to be popped before $b$ enters,
  otherwise $b c m(a)$ would be an occurrence of $231$ inside
  $\out{(\pi)}$. We consider the following two cases. Note that $a<b<c$, so
  $b,c$ are not ltr-minima in $\pi$.
  \begin{itemize}
  \item $c \in B_i$ and $b \in B_j$, with $i < j$. In this case,
    $m_j < m(a) \le a$, hence $a c m_j b \simeq 2413$, which is one of
    the desired patterns.
  \item $c$ and $b$ are in the same block $B_i$. First suppose there is a
    ltr-minimum $m=m_\ell$, with $\ell<i$, such that $b<m<c$; then $m>m(a)$,
    so $m$ precedes $m(a)$ in $\pi$ and $macb \simeq 3142$, again one of the
    listed patterns. Otherwise, suppose that, for every ltr-minimum $m$,
    either $m<b$ or $m>c$ and consider the element $w$ that immediately
    precedes $b$ in $\pi$. We wish to show that $w<b$, which will conclude
    the proof. Suppose, for a contradiction, that $w>b$ and let
    $x_1,x_2,\dots,x_s=w$ be the elements on the $\sigma$-stack, after $w$
    has been pushed, that are not ltr-minima when we read from bottom to
    top. By Lemma~\ref{unimodal_stack}, we have $x_1<x_2<\cdots<x_s$;
    moreover $x_s=w > b$, so there is a minimum index $t$ such that $x_t
    >b$.
    Now observe that, for $\ell>t$, all the elements $x_\ell$ are popped from
    the $\sigma$-stack before $b$ enters, because $b x_\ell x_t \simeq 132$.
    We also observe that necessarily $x_t \le c$, otherwise $c$ would already
    have been popped and $\out{(\pi)}$ would contain the pattern
    $c x_t m(a) \simeq 231$. We can now assert that $b$ is pushed onto the
    $\sigma$-stack immediately above $x_t$. In fact, $x_\ell < b$ for every
    $\ell < t$; moreover, our hypothesis implies that either $m<b$ or $m>c$
    for every ltr-minimum $m$ inside the $\sigma$-stack, therefore $b$ cannot
    be the first element of an occurrence of $231$ (read from top to bottom)
    that involves elements inside the $\sigma$-stack. However this results in
    an occurrence $b x_t m(a)$ of $231$ in $\out{(\pi)}$, which again
    contradicts the hypothesis that $\pi$ is $\sigma$-sortable.\qedhere
  \end{itemize}
\end{proof}

The condition of Theorem~\ref{mesh_patterns_necessary} is also
sufficient for a permutation to be $\sigma$-sortable.

\begin{theorem}\label{mesh_patterns_sufficient}
  If $\pi \in \Av\left( 2314,\meshpatt \right)$, then $\pi$ is
  $\sigma$-sortable.
\end{theorem}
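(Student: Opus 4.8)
The plan is to establish the contrapositive. Recall that $\pi$ is $\sigma$-sortable precisely when $\out(\pi)$ avoids $231$, so it is enough to prove that if $\out(\pi)$ contains $231$ then $\pi$ contains $2314$ or the mesh pattern $\meshpatt$. I will work with the ltr-minima decomposition $\pi=m_1 B_1 \cdots m_k B_k$ and with the shape of the output coming from Lemma~\ref{ltr_min_sortable}(1), namely $\out(\pi)=\widetilde{B_1}\cdots\widetilde{B_k}\,m_k m_{k-1}\cdots m_1$; note that part~(1) of that lemma holds for \emph{every} permutation, so it is available even though we are not assuming sortability (in particular I will not use Lemma~\ref{unimodal_stack}, which does assume it).

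First I would record a reduction. Since the suffix $m_k m_{k-1}\cdots m_1$ is increasing and $m_k=1$ is the global minimum sitting at the very end of $\out(\pi)$, a short argument shows that $\out(\pi)$ contains $231$ if and only if the block part $\widetilde{B_1}\cdots\widetilde{B_k}$ fails to be decreasing: any ascent $v_1<v_2$ inside the block part gives the occurrence $v_1 v_2 m_k\simeq 231$, and conversely a decreasing block part followed by an increasing suffix is $231$-avoiding. Hence I may fix two non-ltr-minima $v_1<v_2$ that are popped from the $\sigma$-stack in this order, and my task is to exhibit a $2314$ or a $\meshpatt$ in~$\pi$.

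The core is a local analysis of the right-greedy $132$-avoiding stack at the instant $v_1$ is popped. Because the larger element $v_2$ is output later, $v_1$ is not popped on its own: it is forced out by the next input letter $z$, whose push would create a forbidden $132$ in the stack, so there is an element $d$ lying below $v_1$ with $z<d<v_1$. At this instant $v_2$ is either still in the input or already in the stack below $v_1$ (it cannot be above $v_1$, lest it be popped first). In the first case $d$ precedes $v_1$ in the input (it lies below it on the stack), $z$ follows $v_1$, and $v_2$ follows $z$, so that $d\,v_1\,z\,v_2$ is an occurrence of $2314$ since $z<d<v_1<v_2$. In the second case $v_2$ precedes $v_1$ in the input, and reading the triple $d\,v_2\,v_1$ (or the ltr-minimum beneath $v_2$ in place of $d$) off the input gives an occurrence $acb\simeq 132$; I would then show that the greedy stack discipline is exactly what forces the two shading conditions of $\meshpatt$.

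The step I expect to be the main obstacle is this second case: producing the mesh occurrence and certifying its shading. Concretely, one must rule out spoilers --- no element preceding $a$ may fall strictly between $b$ and $c$, and no element lying between $c$ and $b$ in $\pi$ may be smaller than $b$ --- and one must treat the awkward configuration in which the only available witness $d$ sits \emph{above} $v_2$ on the stack rather than below it, where the four elements $v_2,d,v_1,z$ form neither $2314$ nor $132$ and a different third element (typically an ltr-minimum) must be brought in. The argument here is that any spoiler would either have been popped before $v_1$, contradicting its claimed position, or would combine with $v_1,v_2,z$ to complete a $2314$, so that in every subcase we land on one of the two forbidden patterns. Carrying out this bookkeeping carefully, while keeping track of the blocks to which $v_1,v_2,d,z$ belong so that Lemma~\ref{ltr_min_sortable} stays applicable, is where essentially all the work lies; the remaining pattern identifications are routine.
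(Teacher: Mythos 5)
Your overall skeleton (contrapositive, ltr-minima decomposition, split according to whether $v_2$ is still in the input or already in the stack) matches the paper's, and your first case is essentially sound: if $v_2$ is still in the input, the input is nonempty, so the pop of $v_1$ really is forced by some next letter $z$, and the witness $d$ below $v_1$ with $z<d<v_1$ does exist (if the blocking pair did not involve the top element $v_1$, the stack would already contain a $132$). The fatal problem is the unconditional claim that ``$v_1$ is not popped on its own: it is forced out by the next input letter $z$.'' In the right-greedy machine an element can also be popped during the final emptying of the stack, after the input is exhausted; and since the stack is only required to avoid $132$ read from top to bottom, it can perfectly well hold a $231$, which upon emptying becomes the $231$ in $\out(\pi)$ without any input letter ever forcing a pop. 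Take $\pi=1423$: every letter gets pushed (no push creates a $132$ inside the stack), the final stack reads $3,2,4,1$ from top to bottom, and $\out(\pi)=3241$ contains $231$. For both admissible pairs, $(v_1,v_2)=(2,4)$ and $(3,4)$, the pop of $v_1$ occurs at the final emptying, so $z$ and $d$ simply do not exist. This is exactly your second case --- the one that must produce the mesh occurrence --- so the case you yourself identify as the crux is the case where your setup collapses.

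There is a second, independent gap: you let $(v_1,v_2)$ be an \emph{arbitrary} noninversion of the output, but the shading certification genuinely depends on which pair is chosen, and your spoiler dichotomy (``popped before $v_1$, contradicting its claimed position, or completes a $2314$'') is false for an arbitrary pair. In $\pi=1423$ with the pair $(3,4)$, the candidate $132$ occurrence is $1\,4\,3$, and the letter $2$ lies between $4$ and $3$ in $\pi$ with $2<3$: a spoiler. It is popped \emph{after} $v_1=3$, so no positional contradiction arises, and $\pi$ contains no $2314$ at all, so neither horn of your dichotomy applies; the theorem survives only because the \emph{other} pair, $(2,4)$, yields the mesh occurrence $1\,4\,2$. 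The paper's proof is engineered around precisely this: it first disposes of cross-block noninversions (they give $m_i b m_j c\simeq 2314$ outright), then fixes the \emph{leftmost adjacent ascent} $(x,y)$ inside a single output block $\widetilde{B_i}$, and both adjacency and leftmost-ness are used essentially in the final step --- any spoiler would have to be popped into $\widetilde{B_i}$ before $(x,y)$, creating an earlier ascent, a contradiction. To complete your proof you would need both repairs: a canonical choice of the pair, and an argument covering pops that happen at the final emptying, where the $132$ occurrence must be assembled from an ltr-minimum and the stack contents alone rather than from $z$ and $d$.
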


\begin{proof}
  Suppose, for a contradiction, that $\pi$ is not $\sigma$-sortable, that is,
  $\out{(\pi)}$ contains an occurrence of $231$. Let
  $\pi=m_1 B_1 m_2 B_2 \cdots m_k B_k$ be the ltr-minima decomposition of
  $\pi$. By Lemma~\ref{ltr_min_sortable}, we have
  $\out{(\pi)}=\widetilde{B_1} \widetilde{B_2}\cdots \widetilde{B_k} m_k
  m_{k-1} \cdots m_2 m_1$.
  Since the ltr-minima are popped from the $\sigma$-stack in increasing
  order, neither $b$ nor $c$ can be a ltr-minimum.  Suppose that $b \in B_i$
  and $c \in B_j$, for some $i \le j$. If $i<j$, then
  $m_i b m_j c \simeq 2314$, which is forbidden. Suppose instead that $i=j$
  and consider the leftmost ascent $x<y$ in $\widetilde{B_i}$ (indeed there
  is at least one ascent in $\widetilde{B_i}$, since the elements $b,c$
  constitute a noninversion in $\widetilde{B_i}$). There are two
  possibilities.

  \begin{enumerate}
  \item If $y$ comes after $x$ in $\pi$ then $x$ has to be popped before $y$
    is pushed onto the $\sigma$-stack. Therefore, when $x$ is popped, there are two
    elements $u,v$ in the $\sigma$-stack, with $v$ above $u$, such that
    $u v w \simeq 231$, where $w$ is the next element of the input. If
    $v \neq x$, then also $v$ is popped after $x$ (for the same reason), but
    this is a contradiction with the fact that $x$ and $y$ constitute an
    ascent in $\widetilde{B_i}$. Thus we have $v=x$ and $u x w \simeq 231$,
    which implies that $w \neq y$ and $u x w y \simeq 2314$ in $\pi$,
    contradicting the assumption that $\pi$ avoids $2314$.
  \item Suppose instead that $y$ precedes $x$ in $\pi$. Observe that $y$
    has to be on the $\sigma$-stack when $x$ enters, because
    $\out{(\pi)}$ contains the ascent $(x,y)$ (this fact will be
    frequently used in the sequel). In this situation, $m_i y x$ is an
    occurrence of $132$ in $\pi$. We now show that either $m_i y x$ is
    an occurrence of $\meshpatt$ or $\pi$ contains $2314$. If there is
    an element $z$ that precedes $m_i$ in $\pi$ such that $x<z<y$ (so
    that $z m_i y x \simeq 3142$), then $z$ cannot be a ltr-minimum. In
    such a case, in fact, by Lemma~\ref{ltr_min_sortable}, $z$ would be
    in the $\sigma$-stack below $y$ when $x$ is pushed, but
    $zyx \simeq 231$, which is impossible due to the restriction of the
    $\sigma$-stack. Instead, if $z \in B_\ell$ for some $\ell<i$, then
    $m_\ell z m_i y \simeq 2314$. Therefore we can assume that every
    element that precedes $m_i$ in $\pi$ is either smaller than $x$ or
    greater than $y$. Finally, suppose that there is an element $z$
    between $y$ and~$x$ in $\pi$ such that $z<x$, which gives an
    occurrence $m_i y z x$ of either $2413$ or $1423$. Then, since $y$
    is still in the $\sigma$-stack when $x$ is pushed and $z$ precedes~$x$ in
    $\pi$, $z$ enters the $\sigma$-stack above $y$, and so $\widetilde{B_I}$ contains
    either $x \ldots z \ldots y$ or $z \ldots x \ldots y$, with
    $z<x$. However, both cases give a contradiction, because $(x,y)$ is
    the first ascent in $\out{(\pi)}$.\qedhere
  \end{enumerate}
\end{proof}

\begin{cor}\label{bar_pattern_char}
  $\Sort(132)=\Av \left( 2314,\meshpatt \right)$.
\end{cor}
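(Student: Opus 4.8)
The plan is to read off the equality directly from the two theorems that immediately precede the statement, since together they supply exactly the two set inclusions whose conjunction is the claimed identity. Recall that throughout we have fixed $\sigma=132$, so that $\Sort(\sigma)=\Sort(132)$; this is the only translation of notation required.

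First I would invoke Theorem~\ref{mesh_patterns_necessary}, which asserts that every $\sigma$-sortable permutation lies in $\Av(2314,\meshpatt)$. This gives the inclusion
\[
\Sort(132)\subseteq\Av(2314,\meshpatt).
\]
Then I would invoke Theorem~\ref{mesh_patterns_sufficient}, which establishes the converse implication: any permutation avoiding both the classical pattern $2314$ and the mesh pattern $\meshpatt$ is $\sigma$-sortable. This yields the reverse inclusion
\[
\Av(2314,\meshpatt)\subseteq\Sort(132).
\]
Combining the two inclusions gives the desired set equality, and the proof is complete.

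There is no genuine obstacle at the level of the corollary itself: all of the combinatorial effort has already been expended in the necessity and sufficiency theorems, where one must track the detailed behaviour of the $\sigma$-stack via the ltr-minima decomposition (Lemma~\ref{ltr_min_sortable}) and the unimodality of the stack content (Lemma~\ref{unimodal_stack}). The corollary merely records the combined content of those two results as a single, clean pattern-avoidance characterization of $\Sort(132)$, which is precisely the form that the geometric and bijective analysis of the subsequent sections will exploit.
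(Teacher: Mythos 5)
Your proof is correct and matches the paper exactly: the corollary is stated without a separate proof precisely because it is the immediate conjunction of Theorem~\ref{mesh_patterns_necessary} (giving $\Sort(132)\subseteq\Av(2314,\meshpatt)$) and Theorem~\ref{mesh_patterns_sufficient} (giving the reverse inclusion). Nothing further is needed.
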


In accordance with Theorem~\ref{classes_and_non_classes}, the set
$\Av \left( 2314,\meshpatt \right)$ is not a permutation class; this is due
to the presence of the non-classical mesh pattern $\meshpatt$. For example,
the $\sigma$-sortable permutation $2413$ contains the pattern $132$, which is not $\sigma$-sortable.

\section{Grid decomposition of $132$-sortable
  permutations}\label{section_grid}

In this section we exploit the characterization in terms of pattern
avoidance in order to provide a geometric description of
$\Sort(\sigma)$. We start by refining the ltr-minima decomposition
$\pi=m_1 B_1 m_2 B_2 \ldots m_k B_k$ of $\pi$ as follows:
\begin{itemize}
\item for $j \ge 1$, the $j$-th \emph{vertical strip} of $\pi$ is
  $B_j$;
\item for $i \ge 1$, the $i$-th \emph{horizontal strip} of $\pi$ is
  $H_i=\left\lbrace x \in \pi :m_i < x < m_{i-1} \right\rbrace$, where
  $m_0=+ \infty$.
\item for any two indices $i,j$, the \emph{cell} of indices $i,j$ of
  $\pi$ is $C_{i,j}=H_i \cap B_j$ (note that $C_{i,j}$ is empty when
  $i>j$).
\item the \emph{core} of $\pi$ is
  $\mathcal{C}(\pi)=B_1 B_2 \ldots B_k$, obtained from $\pi$ by removing
  the ltr-minima.
\end{itemize}

In what follows, the content of each $B_j ,H_i , C_{i,j}$ will be
regarded as a permutation.  For example, let
$\pi=13 \,14 \,15 \,10 \,12 \,6 \,7 \,8 \,11 \,9 \,3 \,1 \,4 \,5 \
2$. Then (see Figure~\ref{Figure_grid_dec}):

\begin{itemize}
\item the ltr-minima of $\pi$ are $13,10,6,3,1$;
\item the vertical strips are $B_1=14 \,15 \simeq 1 \,2$,
  $B_2=12 \simeq 1$, $B_3=7 \,8 \,11 \,9 \simeq 1 \,2 \,4 \,3$,
  $B_4= \emptyset$ and $B_5=4 \,5 \,2 \simeq 2 \,3 \,1$;
\item the horizontal strips are $H_1=14 \,15 \simeq 1 \,2$,
  $H_2= 12 \,11 \simeq 2 \,1$, $H_3= 7 \,8 \,9 \simeq 1 \,2 \,3$,
  $H_4 = 4 \,5 \simeq 1 \,2$ and $H_5 = 2 \simeq 1$;
\item the nonempty cells are $C_{1,1}= 14 \,15 \simeq 1 \,2$,
  $C_{2,2}=12 \simeq 1$, $C_{2,3}=11 \simeq 1$,
  $C_{3,3}=7 \,8 \,9 \simeq 1 \,2 \,3$, $C_{4,5}=4 \,5 \simeq 1 \,2$ and
  $C_{5,5}= 2 \simeq 1$;
\item the core of $\pi$ is
  $\mathcal{C}(\pi)=14 \,15 \,12 \,7 \,8 \,11 \,9 \,4 \,5 \,2 \simeq 9 \
  10 \,8 \,4 \,5 \,7 \,6 \,2 \,3 \,1$.
\end{itemize}

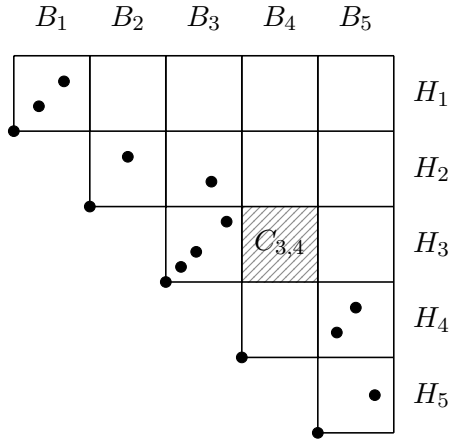
\begin{figure}
  \begin{center}
    \begin{tikzpicture}[scale=1, baseline=20.5pt]
      \fill[NE-lines] (3,2) rectangle (4,3);
      \node[scale=1] at (3.5,2.5) {$C_{3,4}$};
      \draw [semithick] (4,0) grid (5,5);
      \draw [semithick] (3,1) grid (4,5);
      \draw [semithick] (2,2) grid (3,5);
      \draw [semithick] (1,3) grid (2,5);
      \draw [semithick] (0,4) grid (1,5);
      \filldraw (0,4) circle (2pt);
      \filldraw (0.33,4.33) circle (2pt);
      \filldraw (0.66,4.66) circle (2pt);
      \filldraw (1,3) circle (2pt);
      \filldraw (1.5,3.66) circle (2pt);
      \filldraw (2,2) circle (2pt);
      \filldraw (2.2,2.2) circle (2pt);
      \filldraw (2.4,2.4) circle (2pt);
      \filldraw (2.6,3.33) circle (2pt);
      \filldraw (2.8,2.8) circle (2pt);
      \filldraw (3,1) circle (2pt);
      \filldraw (4,0) circle (2pt);
      \filldraw (4.25,1.33) circle (2pt);
      \filldraw (4.5,1.66) circle (2pt);
      \filldraw (4.75,0.5) circle (2pt);
      \node[scale=1] at (0.5,5.5) {$B_1$};
      \node[scale=1] at (1.5,5.5) {$B_2$};
      \node[scale=1] at (2.5,5.5) {$B_3$};
      \node[scale=1] at (3.5,5.5) {$B_4$};
      \node[scale=1] at (4.5,5.5) {$B_5$};
      \node[scale=1] at (5.5,4.5) {$H_1$};
      \node[scale=1] at (5.5,3.5) {$H_2$};
      \node[scale=1] at (5.5,2.5) {$H_3$};
      \node[scale=1] at (5.5,1.5) {$H_4$};
      \node[scale=1] at (5.5,0.5) {$H_5$};
\end{tikzpicture}
\caption{The grid decomposition of the permutation
  $\pi=13 \,14 \,15 \,10 \,12 \,6 \,7 \,8 \,11 \,9 \,3 \,1 \,4 \,5 \,2$. The
  image of $\pi$ under the bijection of Theorem~\ref{bij_12231} is the
  restricted growth function
  $\phi(\pi)=111223332345445$.}\label{Figure_grid_dec}
\end{center}
\end{figure}

The above terminology refers to the graphical representation of $\pi$,
see Figure~\ref{Figure_grid_dec}.  We now collect several properties of
$\sigma$-sortable permutations, in order to find a geometric description of them,
as well as their enumeration.

The next lemma provides a useful property of $\sigma$-sortable permutations.  In
spite of its simplicity, it gives a rather strong constraint on the
shape of a $\sigma$-sortable permutation.

\begin{lemma}\label{No_switch_component}
  Let $\pi$ be a $\sigma$-sortable permutation and suppose that the cell
  $C_{i,j}$ is nonempty, for some $i,j$. Then the cell $C_{u,v}$ is
  empty for each pair of indices $(u,v)$ such that $u<i$ and $v>j$.
\end{lemma}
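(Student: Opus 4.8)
The plan is to prove the statement by contradiction, exploiting the fact that, for a $\sigma$-sortable permutation, the grid carries \emph{two} opposite orderings: the vertical strips are ordered by value through Lemma~\ref{ltr_min_sortable}(2), while the horizontal strips are ordered by value directly through their definition. The key point is that the hypotheses $u<i$ and $v>j$ push these two orderings into conflict. I therefore expect to make no use of the mesh-pattern characterization (Corollary~\ref{bar_pattern_char}) at all; the lemma should fall out as an immediate consequence of Lemma~\ref{ltr_min_sortable}.

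Concretely, I would fix indices $i,j$ with $C_{i,j}\neq\emptyset$, fix a pair $(u,v)$ with $u<i$ and $v>j$, and suppose for contradiction that $C_{u,v}\neq\emptyset$ as well. Choosing witnesses $x\in C_{i,j}=H_i\cap B_j$ and $z\in C_{u,v}=H_u\cap B_v$, I first read off the values from the horizontal strips: $m_i<x<m_{i-1}$ and $m_u<z<m_{u-1}$. Since $u\le i-1$ and the ltr-minima are (weakly) decreasing, we have $m_u\ge m_{i-1}$, and hence
\[
z>m_u\ge m_{i-1}>x,
\]
so $z>x$. On the other hand, $x$ lies in the vertical strip $B_j$ and $z$ in $B_v$ with $j<v$, so Lemma~\ref{ltr_min_sortable}(2) gives $x>z$. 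These two inequalities are incompatible, forcing $C_{u,v}$ to be empty.

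I do not anticipate a genuine obstacle here: the whole argument reduces to setting the single inequality chain $z>m_u\ge m_{i-1}>x$ against $x>z$. The only points that need care are bookkeeping ones, namely that the horizontal strips $H_i$ are defined by \emph{strict} inequalities (so that $m_{i-1}>x$ is strict) and that $u<i$ translates into $m_u\ge m_{i-1}$ via the monotonicity of the ltr-minima; the case $i=1$ is vacuous since then no $u<i$ exists. Conceptually, the content is simply that in the grid of a $\sigma$-sortable permutation moving up one row strictly increases values while moving right one column strictly decreases them, so a nonempty cell forbids any nonempty cell lying strictly to its upper right.
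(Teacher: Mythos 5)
Your proof is correct, but it takes a genuinely different route from the paper's. The paper proves this lemma in one line by exhibiting a forbidden classical pattern: with $x\in C_{i,j}$ and $y\in C_{u,v}$, the subsequence $m_i\, x\, m_v\, y$ is an occurrence of $2314$, contradicting Theorem~\ref{mesh_patterns_necessary}. You instead bypass the pattern-avoidance characterization entirely and derive the contradiction from two orderings: the definitional one on horizontal strips (giving $z>m_u\ge m_{i-1}>x$) against the block ordering of Lemma~\ref{ltr_min_sortable}(2) (giving $x>z$). Your inequality chain is sound, including the bookkeeping points you flag (strictness of $x<m_{i-1}$, monotonicity of the ltr-minima, vacuity when $i=1$). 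What your argument buys is logical economy: it depends only on the elementary decomposition Lemma~\ref{ltr_min_sortable}, proved directly from the machine's behavior, and not on the substantially longer proof of Theorem~\ref{mesh_patterns_necessary}; it also makes transparent the geometric slogan that values increase moving up and decrease moving right. What the paper's proof buys is uniformity: the whole of Section~\ref{section_grid} is organized around extracting geometry from the characterization $\Sort(132)\subseteq\Av(2314,\meshpatt)$, and the $2314$-occurrence argument is the same template used in Lemma~\ref{inversion_in_a_cell} and Propositions~\ref{Layered_cells}--\ref{Complement_no_213}, so it keeps the section's proofs stylistically parallel.
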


\begin{proof}
  Suppose there are two elements $x \in C_{i,j}$ and $y \in C_{u,v}$
  such that $u<i$ and $v>j$. Then $m_i x m_v y \simeq 2314$, which is
  impossible by Theorem~\ref{mesh_patterns_necessary}.
\end{proof}

Our next results are some pattern avoidance characterizations for
$C_{i,j}, H_i$ and $\mathcal{C}(\pi )$.

\begin{lemma}\label{inversion_in_a_cell}
  Let $\pi$ be a $\sigma$-sortable permutation and suppose that the cell
  $C_{i,j}$ contains an inversion $x>y$, where $x$ precedes $y$ in
  $C_{i,j}$. Then there is an element~$z$ between $x$ and $y$ in $\pi$
  such that $z<m_i$.
\end{lemma}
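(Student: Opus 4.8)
The plan is to argue by contradiction using the characterization of Theorem~\ref{mesh_patterns_necessary}: since $\pi$ is $\sigma$-sortable it avoids the mesh pattern $\meshpatt$, so it suffices to manufacture an occurrence of $\meshpatt$ from the assumption that \emph{no} element $z$ with $z<m_i$ lies between $x$ and $y$. Recall that $x,y\in C_{i,j}=H_i\cap B_j$ means $m_i<y<x<m_{i-1}$, that $x$ and $y$ both belong to the block $B_j$, and that $i\le j$ (as $C_{i,j}$ is nonempty). Since $i\le j$, the ltr-minimum $m_i$ occurs before the block $B_j$, hence before both $x$ and $y$. The natural candidate for an occurrence of $\meshpatt$ is therefore the triple $(m_i,x,y)$: its values satisfy $m_i<y<x$ and its positions occur in the order $m_i,x,y$, so it is an occurrence of the classical pattern $132$ of the shape $acb$ with $a=m_i$, $c=x$, $b=y$.

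Next I would verify the first shading condition of $\meshpatt$, namely that every element preceding $a=m_i$ is smaller than $b$ or greater than $c$; in fact I expect to show the stronger statement that all of them exceed $c=x$. Indeed, the ltr-minima preceding $m_i$ are $m_1>\cdots>m_{i-1}$, each at least $m_{i-1}$, which is larger than $x$ because $x\in H_i$ forces $x<m_{i-1}$; and every element of a block $B_\ell$ with $\ell\le i-1<j$ exceeds every element of $B_j$ by Lemma~\ref{ltr_min_sortable}(2), hence also exceeds $x$. Thus the first condition holds comfortably (the ``smaller than $b$'' alternative is never even needed).

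The delicate point — and the place where the naive choice $(m_i,x,y)$ can break down — is the second shading condition, which requires that every element between $c=x$ and $b=y$ be greater than $b$; this can fail if some $v$ sits between $x$ and $y$ with $m_i<v<y$. The fix I would use is to replace $y$ by the element $y'$ of \emph{minimum value} among all elements occupying positions strictly after $x$ and up to and including $y$. Under the contradiction hypothesis every element strictly between $x$ and $y$ exceeds $m_i$, and $y>m_i$ as well, so $y'>m_i$; also $y'\le y<x$, so $(m_i,x,y')$ is still an occurrence of $132$ and the first shading condition is unaffected. The second condition now holds automatically, since any element between $x$ and $y'$ lies in the range over which $y'$ is minimal and is therefore strictly larger than $y'$. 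Hence $(m_i,x,y')$ is a genuine occurrence of $\meshpatt$, contradicting Theorem~\ref{mesh_patterns_necessary} and forcing the existence of some $z<m_i$ between $x$ and $y$. I expect the main obstacle to be exactly this second condition; the minimality trick is what makes the argument go through uniformly, and it also neatly covers the degenerate situation in which $x$ and $y$ are adjacent, where $y'=y$ and the second condition is vacuous.
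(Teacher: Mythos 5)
Your proposal is correct and takes essentially the same approach as the paper: both apply Theorem~\ref{mesh_patterns_necessary} to a $132$-occurrence of the form $m_i\, x\, \cdot$, using the key observation that every element preceding $m_i$ exceeds $x$ (so only the second shading condition of $\meshpatt$ can fail). The only difference is cosmetic: where the paper iterates, replacing $y$ by successively closer elements of smaller value until one drops below $m_i$, you short-circuit that descent with a single extremal choice (the minimum-value element in the window after $x$) inside a proof by contradiction.
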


\begin{proof} We refer to Figure~\ref{figure_cells} for a description of
  the statement of the lemma. For~$x$ and $y$ as above, we have
  $m_i xy \simeq 132$. In particular, $x$ and $y$ are in the same cell
  $C_{i,j}$ and $m_i$ is the corresponding ltr-minimum, hence every
  element $w$ preceding $m_i$ in $\pi$ is greater than $x$ (because
  $w > m_{i-1}$ and $x<m_{i-1}$). Therefore, as a consequence of
  Theorem~\ref{mesh_patterns_necessary}, there exists an element $z$
  between $x$ and $y$ in $\pi$ such that $z<y$. If $z<m_i$, then we are
  done. Otherwise, if $z>m_i$, we can repeat the same argument using the
  occurrence $m_i xz$ of $132$, in which we have replaced $y$ with the
  element $z$ that comes strictly before $y$ in $\pi$; continuing in
  this way we eventually find an element of $\pi$ with the desired
  property.
\end{proof}

\begin{figure}
  \begin{center}
    \begin{tikzpicture}
      \draw [semithick] (1,1) rectangle (2,2);
      \draw [dashed] (1,-1) -- (1,1);
      \draw [dashed] (0,1) -- (1,1);
      \draw [dashed] (-1,2) -- (1,2);
      \draw [dashed] (1.5,0) -- (1.5,1.5);
      \draw [thin] (1.33,1.66) -- (1.66,1.33);
      \node [label=below: $m_j$] at (1,-1) {$\bullet$};
      \node [label=left: $m_i$] at (0,1) {$\bullet$};
      \node [label=left: $m_{i-1}$] at (-1,2) {$\bullet$};
      \node [] at (2.5,1.5) {$C_{i,j}$};
      \node [] at (1.33,1.66) {$\bullet$};
      \node [] at (1.66,1.33) {$\bullet$};
      \node [] at (1.15,1.8) {$x$};
      \node [] at (1.85,1.55) {$y$};
      \node [label=right: $z$] at (1.5,0) {$\bullet$};
    \end{tikzpicture}
    \hspace{1cm}
    \begin{tikzpicture}
      \draw [semithick] (0,0) rectangle (1,1);
      \draw [semithick] (2,2) rectangle (3,3);
      \draw [dashed] (-1,0) -- (0,0);
      \draw [dashed] (2,-1) -- (2,2);
      \draw [dashed] (1,1) -- (2,2);
      \node [label=left: $m_i$] at (-1,0) {$\bullet$};
      \node [label=below: $m_v$] at (2,-1) {$\bullet$};
      \node [] at (0.5,1.5) {$C_{i,j}$};
      \node [] at (2.5,3.5) {$C_{u,v}$};
      \node [] at (0.33,0.66) {$x$};
      \node [] at (2.33,2.66) {$y$};
      \node [] at (0.5,0.5) {$\bullet$};
      \node [] at (2.5,2.5) {$\bullet$};
    \end{tikzpicture}
  \end{center}
  \caption{The constructions of Lemma~\ref{inversion_in_a_cell}, left, and of
    Lemma~\ref{No_switch_component}, right.}\label{figure_cells}
\end{figure}
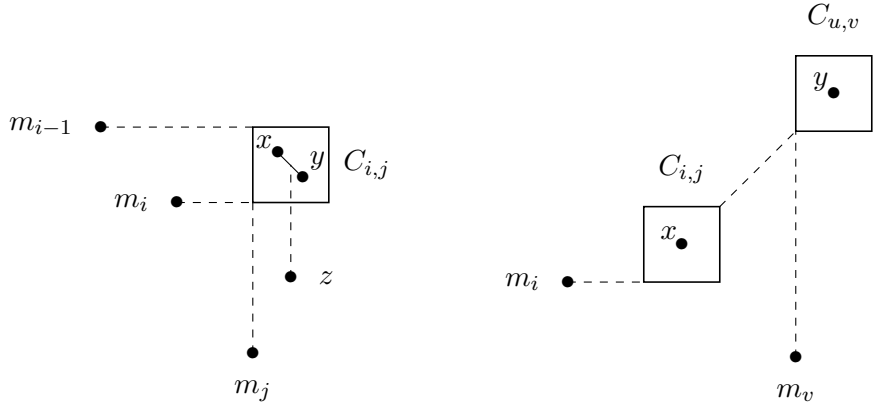

\begin{prop}\label{Layered_cells}
  If $\pi$ is a $\sigma$-sortable permutation, then $C_{i,j} \in \Av(132,213)$,
  for every $i,j$.
\end{prop}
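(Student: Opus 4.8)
The plan is to argue by contradiction, using the observation that \emph{every} occurrence of $132$ or of $213$ inside a single cell contains an inversion, and then feeding that inversion into Lemma~\ref{inversion_in_a_cell}. So I would suppose that some cell $C_{i,j}$ contains an occurrence of one of these two patterns. Two easy structural facts will do the work. First, a nonempty cell forces $i\le j$, so the position of the ltr-minimum $m_i$ is at most that of $m_j$, and hence $m_i$ precedes every element of $B_j$; in particular $m_i$ precedes the whole cell. Second, every element of $C_{i,j}\subseteq H_i$ is larger than $m_i$. These are exactly the facts that let me assemble a forbidden $2314$ around $m_i$ and the low witness produced by the lemma.

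I would then split into the two patterns. If $b\,a\,c\simeq 213$ occurs in $C_{i,j}$ (with $a<b<c$ appearing in this order), then $(b,a)$ is an inversion of the cell, so Lemma~\ref{inversion_in_a_cell} supplies an element $z<m_i$ lying between $b$ and $a$ in $\pi$. Checking positions, $m_i$, $b$, $z$, $c$ occur in this left-to-right order, and checking values, $z<m_i<b<c$; hence $m_i\,b\,z\,c\simeq 2314$. If instead $a\,c\,b\simeq 132$ occurs in $C_{i,j}$ (again $a<b<c$), then $(c,b)$ is an inversion, Lemma~\ref{inversion_in_a_cell} gives $z<m_i$ between $c$ and $b$, and now $m_i$, $a$, $z$, $b$ occur in this order with $z<m_i<a<b$, so $m_i\,a\,z\,b\simeq 2314$. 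In both cases Theorem~\ref{mesh_patterns_necessary} is contradicted, since a $\sigma$-sortable permutation avoids $2314$; this proves $C_{i,j}\in\Av(132,213)$.

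The step I expect to be the real crux is realizing that Lemma~\ref{inversion_in_a_cell} does \emph{not} by itself forbid inversions in a cell. The element $z$ it produces satisfies only $z<m_i$, and since $i\le j$ gives $m_j\le m_i$ it is entirely possible that $m_j<z<m_i$, so $z$ need not clash with anything in $B_j$. The point is instead to use $z$ and $m_i$ as the ``$1$'' and ``$2$'' of a $2314$ and to close the pattern with the large element of the inversion together with one further cell element on the appropriate side; the only delicate part is choosing those two cell elements correctly in each case and verifying the orders $m_i<\dots<z<\dots$ of positions and $z<m_i<\dots$ of values. As a sanity check, on the diagonal $i=j$ any such $z$ would have to satisfy both $z<m_i$ and (being squeezed between two elements of $B_i$) $z>m_i$, so diagonal cells are in fact strictly increasing, consistently with and strictly stronger than the claimed $\Av(132,213)$.
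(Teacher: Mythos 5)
Your proposal is correct and follows essentially the same route as the paper: in both cases (132 and 213) you extract the inversion, invoke Lemma~\ref{inversion_in_a_cell} to get the low element $z<m_i$, and assemble exactly the same occurrences $m_i\,a\,z\,b$ and $m_i\,b\,z\,c$ of $2314$ to contradict Theorem~\ref{mesh_patterns_necessary}. The paper's proof is the same argument, stated slightly more tersely.
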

\begin{proof}
  Suppose that $C_{i,j}$ contains an occurrence $acb$
  of $132$. By Lemma~\ref{inversion_in_a_cell}, there exists an element
  $z$ between $c$ and $b$ in $\pi$ such that $z<m_i$. In particular,
  $m_i a z b \simeq 2314$, which is a contradiction since $\pi$ is
  $\sigma$-sortable (by Theorem~\ref{mesh_patterns_necessary}). On
  the other hand, if $C_{i,j}$ contains an occurrence $bac$ of $213$,
  then $(b,a)$ is an inversion in the cell $C_{i,j}$ and therefore, again
  by Lemma~\ref{inversion_in_a_cell}, there is an element~$z$ between
  $b$ and $a$ in $\pi$ with $z<m_i$ and $m_i bzc \simeq 2314$, a
  contradiction.
\end{proof}

\begin{prop}\label{Layered_H_strips}
  If $\pi$ is a $\sigma$-sortable permutation, then $H_i \in \Av(132,213)$, for
  every $i$.
\end{prop}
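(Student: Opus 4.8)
The plan is to exhibit each horizontal strip $H_i$ as a skew sum of its cells, and then to verify that the class $\Av(132,213)$ is closed under skew sums. First I would observe that the nonempty cells contained in the strip $H_i$ are precisely $C_{i,i},C_{i,i+1},\ldots,C_{i,k}$ (recall that $C_{i,j}=\emptyset$ for $j<i$), and that reading $\pi$ from left to right these cells occur in this order, since $C_{i,j}\subseteq B_j$ and the blocks appear in increasing order of their index. Moreover, by Lemma~\ref{ltr_min_sortable}(2), if $j<j'$ then every element of $B_j$ is larger than every element of $B_{j'}$, so in particular every element of $C_{i,j}$ exceeds every element of $C_{i,j'}$. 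Combining the positional order of the cells with this domination of values yields
\[
  H_i = C_{i,i}\ominus C_{i,i+1}\ominus\cdots\ominus C_{i,k},
\]
where the empty cells are simply omitted.

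By Proposition~\ref{Layered_cells}, each cell $C_{i,j}$ lies in $\Av(132,213)$, so it remains to prove the following closure property: if $\alpha,\beta\in\Av(132,213)$ then $\alpha\ominus\beta\in\Av(132,213)$. (Equivalently, $\Av(132,213)$ is the class of skew sums of increasing permutations, i.e.\ the reverses of the layered permutations, which makes the closure transparent; but I would give a direct argument.) The key point is that no occurrence of $132$ or $213$ in $\alpha\ominus\beta$ can use elements from both summands. Indeed, in the pattern $132$ the first entry is the smallest; but in $\alpha\ominus\beta$ every entry of the positionally earlier summand $\alpha$ is larger than every entry of $\beta$, so the least entry of a putative occurrence, being leftmost, cannot lie in $\alpha$ while a later entry lies in $\beta$. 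Hence such an occurrence lies entirely in $\alpha$ or entirely in $\beta$, contradicting $\alpha,\beta\in\Av(132)$. Dually, in the pattern $213$ the last entry is the largest, and the same value/position incompatibility forces the occurrence into a single summand, contradicting $\alpha,\beta\in\Av(213)$. Thus $\alpha\ominus\beta$ avoids both patterns, and by a straightforward induction the skew sum of any finite number of permutations in $\Av(132,213)$ again lies in $\Av(132,213)$.

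Applying this to the decomposition of $H_i$ displayed above, together with Proposition~\ref{Layered_cells}, gives $H_i\in\Av(132,213)$, as desired. The only delicate point is the closure property; everything else is bookkeeping built on Lemma~\ref{ltr_min_sortable}(2). I expect the main obstacle to be arguing cleanly that an occurrence of $132$ or $213$ cannot straddle the two parts of a skew sum — the crux being that $132$ begins with its minimum while $213$ ends with its maximum, each of which is incompatible with the value ordering imposed by $\ominus$.
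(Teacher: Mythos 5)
Your proof is correct and follows essentially the same route as the paper: the paper's own proof is a one-line citation of Lemma~\ref{ltr_min_sortable} and Proposition~\ref{Layered_cells}, and what you have written is precisely that argument with the implicit details made explicit, namely that the value domination between blocks exhibits $H_i$ as a skew sum of its cells and that $\Av(132,213)$ is closed under skew sums.
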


\begin{proof}
  This is a consequence of Lemma~\ref{ltr_min_sortable} and
  Proposition~\ref{Layered_cells}.
\end{proof}

\begin{prop}\label{Complement_no_213}
  If $\pi$ is a $\sigma$-sortable permutation, then $\mathcal{C}(\pi) \in \Av(213)$.
\end{prop}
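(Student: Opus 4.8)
The plan is to localize any forbidden $213$ to a single vertical strip and then defeat it with a minimality argument against the mesh pattern $\meshpatt$.

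First I would use Lemma~\ref{ltr_min_sortable}(2): the blocks of the ltr-minima decomposition are value-decreasing, in the sense that every element of $B_i$ exceeds every element of $B_j$ whenever $i<j$. Consequently the core $\mathcal{C}(\pi)=B_1 B_2\cdots B_k$ is, as a permutation, the skew sum $B_1\ominus B_2\ominus\cdots\ominus B_k$ of the patterns of its blocks. Since $213$ is skew-indecomposable (it cannot be written as a nontrivial skew sum, because no proper prefix of its values dominates the rest), any occurrence of $213$ in a skew sum must fall inside a single summand. Thus it suffices to show that each block $B_j$ avoids $213$, and I may pick, among all occurrences of $213$ in $\mathcal{C}(\pi)$, one $b\,a\,c$ (with $a<b<c$, appearing in this order) whose middle value $a$ is as small as possible; by the above reduction this occurrence lies inside a single block $B_j$.

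Next I would test this minimal occurrence against $\meshpatt$. Writing $m_j$ for the ltr-minimum heading $B_j$, the element $m_j$ precedes and is smaller than every element of $B_j$, so $m_j\,b\,a$ is an occurrence of the classical pattern $132$. By Theorem~\ref{mesh_patterns_necessary} it cannot be an occurrence of $\meshpatt$, hence one of the two shading conditions must fail. Reading $\meshpatt$ with $m_j$, $b$, $a$ as the smallest, largest, and middle entries of the underlying $132$, these conditions read: (1) every element preceding $m_j$ has value below $a$ or above $b$; and (2) every element lying between $b$ and $a$ in $\pi$ has value above $a$.

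Finally I would rule out both failures. If condition (2) fails there is an element $w$ strictly between $b$ and $a$ with $w<a$; since $B_j$ is a contiguous factor of $\pi$ we get $w\in B_j$, and then $b\,w\,c$ is an occurrence of $213$ inside $B_j$ (positions $b<w<c$ and values $w<b<c$) whose middle value $w<a$ contradicts the minimality of $a$. If condition (1) fails there is an element $v$ preceding $m_j$ with $a<v<b$, whence $v\,b\,a\,c$ is an occurrence of $2314$, contradicting $\pi\in\Av(2314)$ from Theorem~\ref{mesh_patterns_necessary}. With both failures excluded, $m_j\,b\,a$ is forced to be an occurrence of $\meshpatt$, giving the final contradiction. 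The step I expect to need the most care is precisely this last one: translating the shading of $\meshpatt$ correctly into conditions (1) and (2), and checking that their failures produce, respectively, a strictly smaller $213$ and a forbidden $2314$.
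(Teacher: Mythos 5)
Your proof is correct, but it reaches the contradiction through a different mechanism than the paper's. Both proofs begin the same way: the paper also confines the occurrence $bac$ of $213$ to a single vertical strip, deducing this from $b<c$ and Lemma~\ref{ltr_min_sortable}, which is the same content as your skew-indecomposability observation. From there, however, the paper descends to the cell level: writing $b \in C_{i,j}$, it splits according to whether $a$ lies in a strictly lower horizontal strip (giving $m_i b a c \simeq 2314$ at once) or in the same cell as $b$, in which case it invokes the previously established Lemma~\ref{inversion_in_a_cell} to produce an element $z < m_i$ between $b$ and $a$, whence $m_i b z c \simeq 2314$. You never descend to cells and never use Lemma~\ref{inversion_in_a_cell}: you anchor the $132$ at the block minimum $m_j$ rather than at the horizontal-strip minimum $m_i$, choose a $213$ occurrence whose smallest entry is minimal, and let the two shading conditions of $\meshpatt$ drive a dichotomy --- failure of the shading to the left of $m_j$ yields a $2314$, while failure of the shading between $b$ and $a$ yields a $213$ occurrence with strictly smaller minimum, contradicting extremality. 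In effect, your minimal-counterexample choice replaces the iteration (``continuing in this way we eventually find\ldots'') hidden inside the paper's proof of Lemma~\ref{inversion_in_a_cell}. What the paper's route buys is brevity, by reusing grid machinery it develops anyway for the geometric description of $\Sort(\sigma)$; what your route buys is self-containment --- it needs only Lemma~\ref{ltr_min_sortable}(2) and Theorem~\ref{mesh_patterns_necessary} --- and it makes the extremal reasoning explicit rather than delegating it to an auxiliary lemma.
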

\begin{proof}
  Suppose $\pi$ contains an occurrence $bac$ of $213$ that does not involve
  any ltr-minimum and suppose that $b \in C_{i,j}$ for some $i,j$. Note that
  $b<c$, so, by Lemma~\ref{ltr_min_sortable}, $b$ and $c$ must belong to the
  same vertical strip $B_j$. Now, if $a \in C_{\ell,j}$, with $\ell>i$, then
  $m_i b a c \simeq 2314$, which is a contradiction, since $\pi$ is
  $\sigma$-sortable. Therefore we must have $a \in C_{i,j}$. This results in
  an occurrence $m_i b a$ of $132$, with $b$ and $a$ both in the cell
  $C_{i,j}$; thus, by Lemma~\ref{inversion_in_a_cell}, there is an element $z$
  between $b$ and $a$ in $\pi$ such that $z<m_i$ and $m_i b z c \simeq 2314$,
  which is again a contradiction.
\end{proof}

What we have established so far in this section are necessary conditions
satisfied by $\sigma$-sortable permutations. Since each prefix of a
$\sigma$-sortable permutation is still $\sigma$-sortable, removing the last
element from a $\sigma$-sortable permutation $\pi' \in \Sort_{n+1}(\sigma)$
returns a permutation $\pi \in \Sort_n(\sigma)$. In other words, every
permutation in $\Sort_{n+1}(\sigma)$ is obtained from a permutation
$\pi \in \Sort_n(\sigma)$ by inserting a new rightmost element and suitably
rescaling the remaining ones. However, not just any integers are allowed for
such an insertion. Inserting a new minimum, which corresponds to creating a
new vertical strip, is always allowed, because it cannot create any new
occurrence of $2314$ or $\meshpatt$. On the other hand, if $\pi$ has $k$
ltr-minima and we try to insert a new element in one of the cells $C_{i,k}$
of the last vertical strip, we have to obey the conditions stated in
Lemma~\ref{No_switch_component} and Propositions~\ref{Layered_H_strips}
and~\ref{Complement_no_213}. In particular,
Proposition~\ref{Layered_H_strips} implies that any permutation in~$H_i$ is
\emph{co-layered}, that is, it is the skew sum of increasing
permutations. Thus, in order to get a new co-layered permutation from a given
one, and also in order to avoid the forbidden pattern 2314, we find that
there are at most two possible ways to insert a new rightmost element in
$C_{i,k}$:

\begin{enumerate}
\item $\Do$: insert a new minimum in $C_{i,k}$ (which is also a new
  minimum of the horizontal strip $H_i$);
\item $\Co$: create a consecutive ascent in the two final positions of
  $C_{i,k}$,
\end{enumerate}
recalling that an ascent $(a,b)$ is consecutive if $b=a+1$.

This approach is formalized as follows. Let $\pi$ be a $\sigma$-sortable
permutation with $k$ ltr-minima. For $i \ge 1$, the cell $C_{i,k}$
(belonging to the last vertical strip) is said to be \emph{active} if
both of the following conditions hold:
\begin{itemize}
\item[(i)] $C_{u,v}$ is empty for each $u,v$ such that $u>i$ and $v<k$;
\item[(ii)] inserting a new rightmost element according to $\Do$ does
  not create an occurrence of $213$ in $\mathcal{C}(\pi)$. 
\end{itemize}

Note that, thanks to condition (i), condition (ii) can be equivalently
stated by saying that the permutation $\bigcup_{j\geq i+1} C_{j,k}$ is
increasing. Moreover, if a cell $C_{i,k}$ is not active, then every insertion
of a new rightmost element in $C_{i,k}$ results in a non $\sigma$-sortable permutation
due to Lemma~\ref{No_switch_component} and
Proposition~\ref{Complement_no_213}. We shall prove that if instead $C_{i,k}$
is active, then exactly one of the operations $\Do$ and $\Co$ can be
performed in order to obtain a $\sigma$-sortable permutation. To this end we
distinguish two cases, depending on whether $C_{i,k}$ is empty or not.

\begin{prop}\label{cell_legal_insertion}
  Let $\pi=\pi_1 \ldots \pi_n$ be a $\sigma$-sortable permutation with $k$ ltr-minima
  and let $C_{i,k}=\gamma_1 \ldots \gamma_t$ be a nonempty active
  cell of $\pi$. Let $x=\pi_n$ and suppose $x \in C_{\ell,k}$. Then:
\begin{enumerate}
\item performing $\Do$ on $C_{i,k}$ returns a $\sigma$-sortable permutation 
  $\pi'$ if and only if $\ell>i$;
\item performing $\Co$ on $C_{i,k}$ returns a $\sigma$-sortable permutation
  $\pi'$ if and only if $\ell \le i$.
\end{enumerate}
\end{prop}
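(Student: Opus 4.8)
The plan is to use the characterization $\Sort(\sigma)=\Av(2314,\meshpatt)$ from Corollary~\ref{bar_pattern_char}: the permutation $\pi'$ obtained from $\pi$ by a legal insertion is $\sigma$-sortable if and only if it avoids both $2314$ and $\meshpatt$. Since $\pi$ itself avoids these, and since any valid occurrence of $\meshpatt$ in $\pi'$ that does not use the inserted entry restricts to a valid occurrence in $\pi$ (the shaded boxes can only gain points when we add a new entry), I only need to track occurrences using the newly inserted rightmost entry. As this entry is rightmost, it can play only the role of the ``$4$'' in a $2314$ or of the middle point ``$b$'' in an occurrence $acb$ of $\meshpatt=(132,\dots)$. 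Throughout I use the value layout $H_1>m_1>H_2>m_2>\cdots>H_k>m_k=1$, the facts that $\Do$ inserts an entry $m'$ with $m_i<m'<\min H_i$ and $\Co$ inserts $x'$ equal to the successor of $\gamma_t$, and the two active conditions: by (i) every $H_u$ with $u>i$ lies in the last strip, so $R:=\bigcup_{u>i}C_{u,k}$ collects all entries of value below $m_i$ occurring after $m_k$, and by (ii) this run $R$ is increasing. In particular the entries of value $<m_i$ form a decreasing sequence of ltr-minima followed by the increasing run $R$, so they contain no $231$.

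I would first dispatch the two ``failure'' implications, which are short. If $\ell>i$, then $x$ has value below $m_i<\gamma_t<x'$ and, being the last entry of $\pi$, it follows $\gamma_t$; hence $\gamma_t\,x\,x'\simeq213$ lies in the core $\mathcal{C}(\pi')$, so by Proposition~\ref{Complement_no_213} performing $\Co$ yields a non-$\sigma$-sortable permutation. If $\ell\le i$, I claim $m_i\,\gamma_t\,m'$ is a genuine occurrence of $\meshpatt$ in the permutation produced by $\Do$. Indeed it is an occurrence of $132$ with $m_i<m'<\gamma_t$; the first shading condition (nothing before $m_i$ of value between $m'$ and $\gamma_t$) holds automatically, since every entry preceding the ltr-minimum $m_i$ exceeds $m_{i-1}>\gamma_t$; and the second (nothing between $\gamma_t$ and $m'$ of value below $m'$) holds because the only entries of value $<m'$ are ltr-minima (which precede $B_k$, hence $\gamma_t$) and the run $R$, and a run entry $r$ occurring after $\gamma_t$ would, together with $x$ (which has value $>\gamma_t$ when $\ell<i$, and which simply does not occur after $\gamma_t$ when $\ell=i$), give $m_i\,\gamma_t\,r\,x\simeq2314$ in $\pi$, contradicting Theorem~\ref{mesh_patterns_necessary}.

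For the two ``success'' implications I verify avoidance of both patterns. When $\ell>i$ and we perform $\Do$: there is no new $2314$, since its ``$4$'' would have to be $m'$, forcing a $231$ among the entries of value $<m'$, which contain none; and there is no new $\meshpatt$, since in any candidate occurrence $a\,c\,m'$ the entry $x$ (of value $<m'$, sitting immediately to the left of $m'$) lies strictly between $c$ and $m'$ and below $m'$, violating the second shading condition. When $\ell\le i$ and we perform $\Co$: there is no new $2314$, because a $231=uvw$ of values $\le\gamma_t$ preceding $x'$ would extend to $u\,v\,w\,x\simeq2314$ in $\pi$ (the last entry $x$ has value $>\gamma_t\ge v$ if $\ell<i$, and $x=\gamma_t>v$ if $\ell=i$, and $u,v,w$ precede $x$), which is forbidden; and there is no new $\meshpatt$, which I check by a short case analysis on an occurrence $a\,c\,x'$ according to whether $c=x$ and whether $\gamma_t$ follows $c$, using as a box-violator the entry $x$ (when $\ell=i$, where $x=\gamma_t<x'$), the entry $\gamma_t$ (when it follows $c$), or the ltr-minimum $m_\ell$ (whose value lies strictly between $x'$ and $x$ and which necessarily precedes $a$, as no entry of value $\le\gamma_t$ can precede $m_\ell$).

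I expect the last point — avoidance of $\meshpatt$ after $\Co$ — to be the main obstacle: unlike the classical patterns $2314$ and the core condition $213$, the mesh pattern demands certifying that the shaded boxes stay empty for \emph{every} candidate occurrence, and the argument must juggle the position of $\gamma_t$ relative to the ``$3$'' of the pattern and the two possible values ($<x'$ or $>x'$) of the old last entry $x$; it is here that the active conditions and the $2314$-avoidance of $\pi$ are used most delicately. The remaining three directions, by contrast, each reduce to exhibiting or excluding a single short pattern.
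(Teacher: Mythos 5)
Your overall strategy (verify avoidance of $2314$ and $\meshpatt$ in $\pi'$, noting the new entry can only be the ``$4$'' of a $2314$ or the ``$b$'' of an occurrence $acb$ of $\meshpatt$) is the same as the paper's, and three of the four directions are correct as you sketch them: the failure of $\Co$ when $\ell>i$ (the $213$ in the core), the failure of $\Do$ when $\ell\le i$ (your direct exhibition of the mesh occurrence $m_i\,\gamma_t\,m'$ is a nice alternative to the paper's use of Lemma~\ref{inversion_in_a_cell}), and the success of $\Do$ when $\ell>i$. The gap is exactly where you predicted it: the mesh check after $\Co$, in the branch $\ell<i$ with $\gamma_t$ preceding $c$. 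For $m_\ell$ to violate the box $(0,2)$ of the occurrence $a\,c\,x'$, its value must lie strictly between $x'$ and $c$; you only certify $x'<m_\ell<x$, which suffices when $c=x$ but not otherwise. Indeed, any $c\neq x$ following $\gamma_t$ lies in $B_k$ (everything after $\gamma_t$ does), hence in some cell $C_{u,k}$ with $u<i$ (it cannot lie in $C_{i,k}$, whose last entry is $\gamma_t$), and $u$ may satisfy $\ell<u<i$; then $c<m_{u-1}\le m_\ell$, so $m_\ell$ sits \emph{above} the box and violates nothing, while $x$ and $\gamma_t$ are also not in any shaded region.

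Concretely, take $\pi=531246$: here $m_1=5$, $m_2=3$, $m_3=1$, the cell $C_{3,3}=\{2\}$ is nonempty and active, and $x=6\in C_{1,3}$, so $i=3$, $\ell=1$. Performing $\Co$ gives $\pi'=6412573$, and the occurrence $1\,5\,3$ of $132$ (with $c=5$ the rescaled $4\in C_{2,3}$, so $c\neq x$ and $\gamma_t=2$ precedes $c$) is not excluded by any of your three violators: $m_\ell=m_1$ rescales to $6>c$. This occurrence is in fact killed by $m_{i-1}=m_2$ (rescaled to $4$), which lies before $a=1$ with value in $(x',c)=(3,5)$. That is the repair, and it is what the paper does: when $\gamma_t$ precedes $c$, first deduce $c>m_{i-1}$ (since $c\in C_{u,k}$ with $u\le i-1$ gives $c>m_u\ge m_{i-1}$), then use $m_{i-1}$ as the box-violator; it precedes $a$ because $a\le\gamma_t<m_{i-1}$ and $m_{i-1}$ is a ltr-minimum, so $m_{i-1}\,a\,c\,x'\simeq 3142$. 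With $m_\ell$ replaced by $m_{i-1}$ (and your positional dichotomy on $\gamma_t$ versus $c$ retained), your proof goes through.
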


\begin{proof}
  \begin{enumerate}
  \item Suppose $\ell<i$ and we want to insert a new rightmost element
    $\gamma_{t+1}$ into $C_{i,k}$ according to $\Do$. Assume, for a
    contradiction, that the resulting permutation $\pi'$ is $\sigma$-sortable. The
    elements $\gamma_t$ and $\gamma_{t+1}$ form an inversion in $C_{i,k}$, so
    by Lemma~\ref{inversion_in_a_cell} there exists an element $z$ between
    $\gamma_t$ and $\gamma_{t+1}$ in $\pi$ such that $z<m_i$. Hence
    $m_i \gamma_t z x \simeq 2314$, which contradicts the assumption that
    $\pi$ is $\sigma$-sortable. Instead, if $\ell=i$, that is, $\gamma_t=x=\pi_n$,
    then $\gamma_t \gamma_{t+1}$ is an inversion inside $C_{i,k}$ such that
    $\gamma_t$ and $\gamma_{t+1}$ are adjacent in $\pi$. This implies that
    $\pi$ is not $\sigma$-sortable (again as a consequence of
    Lemma~\ref{inversion_in_a_cell}).

    Conversely, suppose that $\ell>i$ and $\gamma_{t+1}$ is inserted into
    $C_{i,k}$ according to $\Do$. By
    Theorem~\ref{mesh_patterns_necessary},
    $\pi \in \Av(2314,\meshpatt)$, so we just have to show that the
    permutation $\pi'$ obtained after the insertion still avoids the two
    forbidden patterns. If $\gamma_{t+1}$ plays the role of the $2$
    in an occurrence of $132$, say $ac \gamma_{t+1}$, then we have
    either $acx \gamma_{t+1}\simeq 1423$ or
    $acx \gamma_{t+1}\simeq 2413$, which means that the selected
    occurrence of 132 is not an occurrence of the mesh pattern
    $\meshpatt$. Otherwise, suppose there is an occurrence
    $bca \gamma_{t+1}$ of $2314$ in $\pi'$. If $m_k =1$ precedes $c$ in
    $\pi$, then $c a \gamma_t \simeq 213$ in $\mathcal{C}(\pi)$,
    contradicting Proposition~\ref{Complement_no_213}. On the other hand, if
    $m_k$ follows $c$ in $\pi$, then $c \in B_j$, for some $j<k$, and
    $\gamma_t \in B_k$, with $c<\gamma_t$, contradicting
    Lemma~\ref{ltr_min_sortable}.

  \item Suppose we insert $\gamma_{t+1}$ into $C_{i,k}$ according to
    $\Co$ and $\ell>i$. Then $\gamma_t x \gamma_{t+1}$ is an occurrence of
    $213$ in $\mathcal{C}(\pi')$, hence $\pi'$ is not $\sigma$-sortable, due to
    Proposition~\ref{Complement_no_213}, as desired.

    Conversely, suppose that $\ell < i$ and we insert $\gamma_{t+1}$ into
    $C_{i,k}$ according to $\Co$; this means that
    $\gamma_{t+1}=\gamma_t+1$. The resulting permutation $\pi'$ does not
    contain an occurrence $bcad$ of $2314$ with $\gamma_{t+1}=d$, for
    otherwise $bcax$ would be an occurrence of $2314$ in $\pi$, contradicting
    the hypothesis that $\pi$ is $\sigma$-sortable. On the other hand, suppose there
    are two elements $a,c$ in $\pi$ such that $a c \gamma_{t+1}$ is an
    occurrence of $132$. We now prove that $a c \gamma_{t+1}$ is not an
    occurrence of the mesh pattern $\meshpatt$ by distinguishing two
    cases. 

If $c>m_{i-1}$ (note that $i>\ell$, so $m_{i-1}$ exists), then
    $a<\gamma_{t+1} <m_{i-1}$, so $m_{i-1}$ precedes $a$ in $\pi$ (because
    $a<m_{i-1}$ and $m_{i-1}$ is a ltr-minimum) and $m_{i-1}ac \gamma_{t+1}$
    would be an occurrence of $3142$. Instead, if $c<m_{i-1}$, then $c$ is
    not a ltr-minimum, because $a<c$ precedes $c$; moreover, $c$ is in
    $C_{i,k}$, since $c<m_{i-1}$ and $c>\gamma_{t+1}$, hence $c \gamma_t x$
    is an occurrence of $213$ in $\mathcal{C}(\pi)$, which is impossible due 
    to Proposition~\ref{Complement_no_213}. Finally, if $\ell=i$, then
    $x=\gamma_t$, $\gamma_{t+1}=\gamma_t+1$ and they are adjacent in $\pi'$,
    so $\gamma_{t+1}$ is neither part of an occurrence of $2314$ nor of
    $\meshpatt$, since otherwise $\gamma_t$ would be as well, contradicting
    the hypothesis that $\pi$ is $\sigma$-sortable.\qedhere
  \end{enumerate}
\end{proof}

If $C_{i,k}$ is empty, then the operation $\Co$ does not make sense, so the
only possibility is to try to perform $\Do$. The next proposition asserts
that this can always be done.

\begin{prop}\label{empty_cell}
  Let $\pi=\pi_1 \ldots \pi_n$ be a $\sigma$-sortable permutation with $k$ ltr-minima
  and let $C_{i,k}$ be an empty active cell of $\pi$. Let $\pi'$
  be the permutation obtained from $\pi$ by inserting a new rightmost
  element $y$ in $C_{i,k}$ according to $\Do$. Then $\pi'$ is $\sigma$-sortable.
\end{prop}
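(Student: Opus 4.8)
The plan is to show directly that $\pi'$ avoids both forbidden patterns, so that $\sigma$-sortability follows from Theorem~\ref{mesh_patterns_sufficient}. Since $\pi$ is $\sigma$-sortable, it lies in $\Av(2314,\meshpatt)$ by Theorem~\ref{mesh_patterns_necessary}, hence any occurrence of a forbidden pattern in $\pi'$ must involve the new element $y$. Because $y$ is the rightmost element of $\pi'$, it always occupies the \emph{last} position of such an occurrence. Moreover, performing $\Do$ makes $y$ a new minimum of the horizontal strip $H_i$, so $y$ sits immediately above $m_i$ in value and $y \in C_{i,k} \subseteq B_k$. I first record two consequences of activeness to be used repeatedly: by condition~(i), every element of value $<y$ that is not a ltr-minimum lies in the last vertical strip, precisely in $S=\bigcup_{j\geq i+1}C_{j,k}$; and by condition~(ii) this set $S$ is increasing.

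Next I would rule out a new occurrence of $2314$. As $y$ is rightmost and $2314$ has its maximum in the last position, $y$ must play the role of the $4$, so the other three elements form an occurrence of $231$ among elements of value $<y$. These elements are exactly the ltr-minima $m_i>m_{i+1}>\cdots>m_k$, which occur in this decreasing order, followed in position by the increasing set $S\subseteq B_k$ (note no element of $H_i$ has value below $y$). A short case check on how many of the three chosen elements lie in $\{m_i,\dots,m_k\}$ and how many lie in $S$ shows that no $231$ can arise: the ltr-minima alone are decreasing, $S$ alone is increasing, and every mixed triple fails because $S$ occurs entirely after the ltr-minima and is itself increasing. Hence $\pi'$ avoids $2314$.

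Then I would rule out a new occurrence $acy\simeq 132$ of the mesh pattern $\meshpatt$, in which $y$ plays the role of the $2$, so that $a<y<c$ with $a$ preceding $c$ preceding $y$. I would split according to the position of the global minimum $m_k$. If $c$ precedes $m_k$, then $m_k$ lies strictly between $c$ and $y$ (recall $y\in B_k$ comes after $m_k$) and $m_k<y$, violating the mesh requirement that every element between $c$ and $y$ be larger than $y$. If instead $c$ follows $m_k$, then $c\in B_k$; since $C_{i,k}$ is empty in $\pi$ and $c>y$, the element $c$ can lie neither in $H_i$ nor in a lower strip (whose values are below $m_i<y$), so $c\in C_{u,k}$ with $u<i$, whence $c>m_u\ge m_{i-1}$. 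On the other hand $a$ has value at most $m_i$, so $a$ occurs at or after the position of $m_i$, hence after $m_{i-1}$; thus $m_{i-1}$ precedes $a$, yet $m_{i-1}$ is neither smaller than $y$ nor larger than $c$, violating the mesh requirement on the elements preceding $a$. (When $i=1$ this second subcase cannot occur, as there is no strip above $H_1$.) Either way $acy$ is not an occurrence of $\meshpatt$, so $\pi'$ avoids $\meshpatt$ and is therefore $\sigma$-sortable.

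The main obstacle is the mesh-pattern step: one must correctly translate the two shaded regions of $\meshpatt$ into the conditions on the elements preceding $a$ and between $c$ and $y$, and then combine the emptiness of $C_{i,k}$ with activeness to locate $c$ exactly; the inequality $c>m_{i-1}$ in the second subcase is the crux. Once the positions of the elements of value $<y$, and of $c$, are pinned down by conditions~(i) and~(ii), both verifications reduce to the routine comparisons sketched above.
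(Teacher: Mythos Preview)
Your proof is correct and follows essentially the same approach as the paper's: both show $\pi'\in\Av(2314,\meshpatt)$ by checking that no new occurrence involving $y$ can arise, splitting the mesh case according to whether $c$ lies before or after $m_k$, and using activeness conditions (i) and (ii) in the same places. The only cosmetic differences are that for $2314$ you first classify all elements of value $<y$ (ltr-minima $m_i,\dots,m_k$ together with the increasing set $S$) and then exclude a $231$ among them, whereas the paper argues directly from a hypothetical occurrence $bcay$; and in the second mesh subcase you use $m_{i-1}$ as the witness in the forbidden region $(0,2)$, while the paper uses $m_\ell$ with $c\in C_{\ell,k}$---both choices work.
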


\begin{proof}
  By Theorem~\ref{mesh_patterns_necessary} we have that
  $\pi \in \Av(2314,\meshpatt)$ and we want to prove that
  $\pi' \in \Av(2314,\meshpatt)$. Suppose there are three elements
  $b,c,a$ in $\pi$ such that $bcay \simeq 2314$. Since $c>b$, the
  element $c$ is not a ltr-minimum of $\pi$. Suppose that
  $c \in C_{u,v}$, for some $u,v$. If $a$ is a ltr-minimum, then of
  course $v<k$, and we have also $u>i$, because $y$ is the minimum of
  its horizontal strip and $y>c$. This would imply that $C_{u,v}$ is a
  nonempty cell, with $u>i$ and $v<k$, which is impossible since
  $C_{i,k}$ is active. Otherwise, if $a$ is not a ltr-minimum, then
  $cay \simeq 213$ in $\mathcal{C}(\pi')$, which again
  contradicts the assumption that $C_{i,k}$ is active.

  Next, in order to prove that $\pi'$ does not contain the mesh pattern
  $\meshpatt$, suppose there are two elements $a,c$ in $\pi$ such
  that $acy \simeq 132$ and suppose $c \in B_j$, for some $j \le k$. If
  $j<k$, then $ac m_k y$ is an occurrence of $2413$, as
  desired. Otherwise, if $j=k$, we have that $c \in C_{\ell,k}$, for some
  $\ell<k$, because $C_{i,k}$ is empty before we insert $y$; moreover,
  $m_\ell$ precedes $a$ in $\pi$, because $m_\ell>y$ and $a<y$. Thus
  $m_\ell a c y \simeq 3142$, as desired.
\end{proof}

\begin{cor}\label{recursive_construction}
  Let $\pi$ be a $\sigma$-sortable permutation. Then, for every active cell of
  $\pi$, exactly one of $\Do$ and $\Co$ generates a $\sigma$-sortable
  permutation.
\end{cor}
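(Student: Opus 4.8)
The statement is an immediate consequence of Propositions~\ref{cell_legal_insertion} and~\ref{empty_cell}, so the plan is to reduce it to a short case analysis according to whether the active cell $C_{i,k}$ is empty or nonempty. For a fixed active cell $C_{i,k}$ in the last vertical strip, I want to show that precisely one of the two operations $\Do$ and $\Co$ leads out of $\Sort(\sigma)$ into itself.

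First I would dispose of the case where $C_{i,k}$ is empty. Here the operation $\Co$, which by definition requires creating a consecutive ascent between the two final positions of $C_{i,k}$, simply cannot be performed, since $C_{i,k}$ contains no element to begin with. On the other hand, Proposition~\ref{empty_cell} guarantees that performing $\Do$ on an empty active cell always yields a $\sigma$-sortable permutation. Hence exactly one of the two operations --- namely $\Do$ --- produces a $\sigma$-sortable permutation, as claimed.

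It remains to treat the case where $C_{i,k}$ is nonempty. Since $C_{i,k} \subseteq B_k$, the nonemptiness of $C_{i,k}$ forces the last vertical strip $B_k$ to be nonempty, so the last element $x=\pi_n$ of $\pi$ lies in $B_k$, say $x \in C_{\ell,k}$ for a well-defined index $\ell$. I would then invoke Proposition~\ref{cell_legal_insertion} directly: it asserts that $\Do$ returns a $\sigma$-sortable permutation precisely when $\ell > i$, and that $\Co$ returns a $\sigma$-sortable permutation precisely when $\ell \le i$. Because the conditions $\ell > i$ and $\ell \le i$ are mutually exclusive and jointly exhaustive, exactly one of $\Do$ and $\Co$ succeeds, which finishes the nonempty case and hence the corollary.

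I do not expect any genuine obstacle here, since the substantive work has already been carried out in the two preceding propositions; the corollary is essentially a packaging of their conclusions. The only point that warrants a moment's care is the verification that $x=\pi_n$ really belongs to the last vertical strip $B_k$, so that the index $\ell$ --- and therefore the dichotomy $\ell>i$ versus $\ell\le i$ driving the nonempty case --- is well defined. As noted above, this follows immediately from the nonemptiness of $C_{i,k}$, so the argument goes through cleanly.
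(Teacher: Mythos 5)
Your proposal is correct and is essentially the paper's own argument: the corollary is stated there without a separate proof precisely because it is the packaging of Propositions~\ref{cell_legal_insertion} and~\ref{empty_cell}, with the same case split (empty cell: $\Co$ is undefined and $\Do$ always succeeds; nonempty cell: the dichotomy $\ell>i$ versus $\ell\le i$ makes exactly one operation succeed). Your added check that nonemptiness of $C_{i,k}$ forces $\pi_n\in B_k$, so that the index $\ell$ is well defined, is a sound and worthwhile detail that the paper leaves implicit.
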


Propositions~\ref{cell_legal_insertion} and~\ref{empty_cell} can be
interpreted as a constructive procedure to generate inductively every
$\sigma$-sortable permutation. Starting from $\pi \in \Sort_n(\sigma)$,
one can either insert a new rightmost minimum or choose an active cell
of $\pi$ and insert a new rightmost element by performing either $\Do$
or $\Co$, according to the rules of
Propositions~\ref{cell_legal_insertion} and~\ref{empty_cell}. Moreover,
if the number of active cells of $\pi$ is $t$, then $\pi$ produces $t+1$
$\sigma$-sortable permutations of length $n+1$: one for each active cell and one
when a new minimum is inserted. In principle, this gives rise to a
generating tree for $\sigma$-sortable permutations, which is often a useful tool
for enumeration. Unfortunately, we have not been able to fully
understand the succession rule of such a tree (namely, we do not know
how to compute the number of active sites of the permutations generated
by a permutation with a given number of active sites). However, by
exploiting the grid structure of $\sigma$-sortable permutations, our
generating procedure leads to a bijection with a class of pattern
avoiding {\rgfs}.

Let $\pi=\pi_1 \ldots \pi_n$ be a permutation with $k$ ltr-minima
$m_1,\dots,m_k$ and set $m_0=+\infty$. Define the map $\phi$ by setting
$\phi(\pi)=r_1 \ldots r_n$, where $r_i=j$ if
$m_{j} \le \pi_i < m_{j-1}$. In other words, the map $\phi$ scans the
permutation $\pi$ from left to right and records the index of the
horizontal strip that contains the current element of $\pi$, including
the ltr-minima in the corresponding strips. For example, if
$\pi=13 \,14 \,15 \,10 \,12 \,6 \,7 \,8 \,11 \,9 \,3 \,1 \,4 \,5 \,2$,
then $\phi(\pi)=111223332345445$ (see Figure~\ref{Figure_grid_dec}).
Note that $\phi$ is defined for any permutations.  We will now show
that, when restricted to $\sigma$-sortable permutations, the map $\phi$
is a bijection between $\Sort_n(\sigma)$ and $\mathcal{R}_n(12231)$.

\begin{theorem}\label{bij_12231}
  Let $\phi: \Sort_n(\sigma) \rightarrow \mathcal{R}_n(12231)$ be
  defined as above. Then $\phi$ is a bijection.
\end{theorem}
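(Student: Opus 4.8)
The plan is to prove the statement by induction on $n$, matching the recursive generation of $\Sort(\sigma)$ provided by Corollary~\ref{recursive_construction} with the generation of {\rgfs} by appending a single letter. Write $k=\max\phi(\pi)$, which by construction is the number of ltr-minima of $\pi$. Two preliminary remarks organise the induction. First, $\phi(\pi)$ is an {\rgf} for \emph{every} permutation $\pi$: a new largest letter is recorded exactly at each ltr-minimum, and the ltr-minima occur from left to right, so $r_1=1$ and $r_p\le 1+\max\{r_1,\dots,r_{p-1}\}$. Second, both families are closed under deleting the last entry (a prefix of a $\sigma$-sortable permutation is $\sigma$-sortable, and a prefix of a $12231$-avoiding {\rgf} is again one), and $\phi$ commutes with this deletion: the horizontal strip containing $\pi_p$ is unchanged when later entries are removed, so $\phi(\pi_1\cdots\pi_{n-1})$ is obtained from $\phi(\pi)$ by erasing its last letter. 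Hence it suffices to analyse, for a fixed $\pi\in\Sort_n(\sigma)$ with $R:=\phi(\pi)$, how the \emph{children} of $\pi$ (the permutations of $\Sort_{n+1}(\sigma)$ obtained by a rightmost insertion) are sent by $\phi$ to the one-letter extensions of $R$.

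The heart of the argument is the equivalence, for every $1\le v\le k+1$,
\begin{equation*}
R\,v \in \mathcal{R}(12231) \iff v = k+1 \ \text{ or }\ C_{v,k}\text{ is an active cell of } \pi, \tag{$\ast$}
\end{equation*}
where $R\,v$ denotes $R$ with $v$ appended. Granting $(\ast)$, the inductive step closes as follows. By Corollary~\ref{recursive_construction} the children of $\pi$ are: the one obtained by inserting a new minimum, whose image appends $k+1$; and, for each active cell $C_{i,k}$, the unique child produced by whichever of $\Do,\Co$ is legal (Propositions~\ref{cell_legal_insertion} and~\ref{empty_cell}), whose image appends $i$, since the inserted element lies in $H_i$. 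Distinct active cells carry distinct indices and $k+1$ exceeds them all, so these children receive pairwise distinct images; together with the inductive hypothesis and the fact that $\phi$ commutes with last-letter deletion, this gives injectivity at length $n+1$. The direction ``$\Leftarrow$'' of $(\ast)$ shows every child is mapped into $\mathcal{R}_{n+1}(12231)$, so $\phi$ is well defined there; the direction ``$\Rightarrow$'' shows that every $R\,v\in\mathcal{R}_{n+1}(12231)$ is realised, since $R=\phi(\pi)$ for a unique $\pi\in\Sort_n(\sigma)$ by the inductive hypothesis and $(\ast)$ then furnishes a child of $\pi$ with image $R\,v$. This yields surjectivity; the base case $n=1$ is trivial.

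To prove $(\ast)$ I would first record that the whole grid of $\pi$ is legible in $R$: position $p$ lies in horizontal strip $r_p$ and in vertical strip $\max\{r_1,\dots,r_p\}$, so the set of nonempty cells, and hence condition (i) in the definition of an active cell, depends only on $R$. The case $v=k+1$ is immediate, since the new maximum cannot be the least element of a $12231$. For $v=i\le k$ the point is that, because $R$ already avoids $12231$ by the inductive hypothesis, appending $i$ creates a $12231$ precisely when $R$ contains a subword $i\,c\,c\,d$ with $i<c<d$, that is, a subword standardising to $1223$ whose first letter equals $i$ (Lemma~\ref{RGF_prop} is convenient for normalising such subwords). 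Thus $(\ast)$ reduces to the combinatorial statement that $C_{i,k}$ is active if and only if $\phi(\pi)$ has no subword $i\,c\,c\,d$ with $i<c<d$. For ``$\Rightarrow$'' (the contrapositive, towards surjectivity) I would start from a failure of activeness: if condition (ii) fails, the lower part $\bigcup_{j>i}C_{j,k}$ of the last vertical strip is not increasing, and a value-descent there reads in $R$ as two equal letters $c$ followed by a larger letter $d$; if instead condition (i) fails, a nonempty cell $C_{u,w}$ with $u>i$ and $w<k$, together with the intervening ltr-minima, supplies the same pattern. Prefixing an earlier occurrence of the letter $i$ (for instance the one recorded at $m_i$) then exhibits the forbidden subword. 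For ``$\Leftarrow$'' one argues conversely that an active cell leaves no room for such a subword, using Lemma~\ref{No_switch_component} and Proposition~\ref{Complement_no_213} to locate the offending entries in the last strip.

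The hard part will be this dictionary between the {\rgf} pattern and the grid, and specifically the control of the within-cell orders. These orders are \emph{not} recorded by $R$, since all entries of a cell share one letter, yet they enter decisively: a direct attempt to read a $12231$ of $\phi(\pi)$ as a forbidden $2314$ or mesh pattern $\meshpatt$ of $\pi$ succeeds when the two middle letters form an ascent but breaks down when they form a descent, precisely the configuration that the active-cell conditions and the structural results (Propositions~\ref{Layered_H_strips} and~\ref{Complement_no_213} and Lemma~\ref{No_switch_component}) are designed to regulate. Carrying out this case analysis cleanly, so that both implications hold with the correct index $i$, is the main obstacle; everything else is bookkeeping around the recursive construction already established.
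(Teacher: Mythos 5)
Your overall architecture is sound and is essentially a reorganization of the paper's own argument: injectivity from Corollary~\ref{recursive_construction}, and both well-definedness and surjectivity funneled through your equivalence $(\ast)$, whose two directions correspond to the paper's two computations (an occurrence of $2231$ in $\phi(\pi)$ yields a $2314$ in $\pi$; an insertion into a non-active cell yields a $2231$ in the image). The equivalence $(\ast)$ is indeed true. The genuine gap is that you never prove it, and the sketch you offer for the direction ``$C_{i,k}$ not active $\Rightarrow$ appending $i$ creates the pattern'' is false as stated. You claim that when condition (ii) fails, ``a value-descent there reads in $R$ as two equal letters $c$ followed by a larger letter $d$''. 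It does not. Take $\pi=6\,3\,1\,7\,8\,5\,2\,4$, which is $\sigma$-sortable, with $R=\phi(\pi)=12311232$ and $k=3$: the cell $C_{1,3}$ fails condition (ii) because of the descents $5>4$ (inside $C_{2,3}$) and $5>2$ (across $C_{2,3}$ and $C_{3,3}$). The first descent reads in $R$ as two equal letters $2$, in positions $6$ and $8$, with \emph{nothing} after them; the second reads as the two \emph{distinct} letters $2,3$. The occurrence of $2231$ in $R\,1$ that $(\ast)$ promises does exist, but it must be assembled differently: its first letter is that of the ltr-minimum $m_2$ (the leftmost occurrence of the letter $2$), its second is that of the element $5$, and its third is that of the element $2\in C_{3,3}$, which Lemma~\ref{inversion_in_a_cell} guarantees to lie \emph{between} $5$ and $4$ and \emph{below} $m_2$. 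In general, a descent inside a cell contributes two equal letters with the larger letter wedged between them rather than after them, and a descent across cells contributes unequal letters; in either case the equal pair must be completed by the ltr-minimum of the relevant strip, and the larger letter supplied by the trapped element of Lemma~\ref{inversion_in_a_cell} --- a lemma your proposal never invokes.

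This is exactly the difficulty you flag in your closing paragraph (the within-strip order is invisible in $R$), but identifying the obstacle is not the same as overcoming it: the case analysis behind $(\ast)$ is the entire content of the theorem, and it is missing. The two devices the paper uses to carry it out are, first, systematically replacing the first letter of any pattern occurrence by the ltr-minimum of its strip, which is simultaneously the positionally leftmost and the value-smallest element carrying that letter, so that descents inside a strip cannot interfere; and second, Lemma~\ref{inversion_in_a_cell}. With these two ingredients both directions of $(\ast)$ go through (for ``$\Leftarrow$'', split according to whether the witnesses of the subword $c\,c\,d$ lie in the last vertical strip, giving a failure of condition (ii), or in an earlier one, giving a failure of condition (i), in the spirit of Lemma~\ref{No_switch_component} and Proposition~\ref{Complement_no_213}). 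Until this is supplied, what you have is a plausible outline, not a proof.
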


\begin{proof}
  By Lemma~\ref{RGF_prop}, avoiding $12231$ is equivalent to avoiding
  $2231$. We start by proving that, for each $\sigma$-sortable permutation
  $\pi$, $\phi(\pi)$ avoids $2231$, that is, $\phi$ is well-defined. Suppose,
  on the contrary, that $\phi(\pi)$ contains an occurrence
  $r_{i_1} r_{i_2} r_{i_3} r_{i_4}$ of $2231$. Consider the leftmost
  occurrence $r_j$ of the integer $r_{i_1}$ in~$\pi$ (note that $j \le
  i_1$).
  Then $r_j$ corresponds through $\phi$ to the ltr-minimum of the horizontal
  strip of index $r_{i_1}$ in $\pi$. Hence the elements
  $\pi_j \pi_{i_2} \pi_{i_3} \pi_{i_4}$ form an occurrence of $2314$ in
  $\pi$, which contradicts Theorem~\ref{mesh_patterns_necessary}.

  That $\phi$ is injective is a consequence of
  Corollary~\ref{recursive_construction}. Moreover, using the construction of
  Proposition~\ref{cell_legal_insertion}, we will show that $\phi$ is
  surjective. Given a {\rgf} $R=r_1 r_2 \ldots r_n$, construct the
  permutation $\pi_R$ by scanning $R$ from left to right and, when the
  current element is $r_\ell$, insert a new rightmost element $\pi_\ell$ in
  the following way (suitably rescaling the previous elements when
  necessary):

  \begin{itemize}
  \item when $r_\ell$ is the first occurrence of an integer in $R$ then
    $\pi_\ell =1$;
  \item otherwise, $\pi_\ell$ is inserted in the horizontal strip
    $H_{r_\ell}$, according to the rules of
    Proposition~\ref{cell_legal_insertion}.
  \end{itemize}

  We now wish to prove that, if the  {\rgf} $R$
  avoids $2231$, then $\pi_R$ is a $\sigma$-sortable permutation such that
  $\phi (\pi_R )=R$. It is easy to see that $\phi (\pi_R )=R$, as a
  direct consequence of the definition of $\phi$.  Since insertions
  inside active cells are always allowed, what remains to be shown is
  that each element is in fact inserted into an active cell. We now
  argue by contradiction, and suppose that $y$ is the first element that
  is inserted into a nonactive cell $C_{i,j}$. According to the
  definition of an active cell, there are two cases to consider.

  \begin{enumerate}
  \item If there exists a nonempty cell $C_{u,v}$, with $u>i$ and
    $v<j$, then, given any $x\in C_{u,v}$, the elements of $R$
    corresponding to $m_u x m_j y$ form an occurrence of $2231$, which
    is forbidden.
  \item Suppose that inserting a new rightmost element according to $\Do$
    creates an occurrence $bay$ of $213$ that does not involve any
    ltr-minima. Let $H_u$ be the horizontal strip that contains $b$ and let
    $H_v$ be the horizontal strip that contains $a$. Note that $v\ge u>
    i$.
    If $v>u$, then the elements corresponding to $m_u bay$ in $R$ form an
    occurrence of $2231$, which is again a contradiction. On the other hand,
    if $v=u$, then $a$ belongs to the same horizontal strip of~$b$, so,
    since $a<b$, $a$ was inserted according to $\Do$. Therefore, by
    Proposition~\ref{cell_legal_insertion} and our choice of $y$, the element
    $a'$ that precedes $a$ in $\mathcal{C}(\pi)$ belongs to $H_w$, for some
    $w>u$. As a consequence, the elements $m_u b a' c$ correspond to an
    occurrence of $2231$ in $R$, which is impossible. \qedhere
  \end{enumerate}
\end{proof}

\begin{cor}\label{enumeration_bij}
  For every natural number $n$, $|\Sort_n(\sigma)| = | \mathcal{R}_n(12231)|$.
\end{cor}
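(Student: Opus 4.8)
The plan is to obtain this counting equality as an immediate consequence of Theorem~\ref{bij_12231}. That theorem asserts that the map $\phi\colon \Sort_n(\sigma) \to \mathcal{R}_n(12231)$ is a bijection, and the defining property of a bijection between finite sets is precisely that the domain and codomain have the same cardinality. So the entire content of the corollary is a restatement of the theorem at the level of counting: I would simply note that $\phi$ restricts to a bijection on each length $n$, and hence $|\Sort_n(\sigma)| = |\mathcal{R}_n(12231)|$ for every $n$.

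In a bit more detail, the proof needs only to unwind what ``bijection'' means. Theorem~\ref{bij_12231} was established in two parts: well-definedness (every $\sigma$-sortable permutation $\pi$ maps to an {\rgf} avoiding $2231$, equivalently $12231$ by Lemma~\ref{RGF_prop}), injectivity (a consequence of the deterministic insertion rules in Corollary~\ref{recursive_construction}), and surjectivity (via the explicit reconstruction $R \mapsto \pi_R$ using Proposition~\ref{cell_legal_insertion}). Since $\phi$ is a map from $\Sort_n(\sigma)$ to $\mathcal{R}_n(12231)$ that is both injective and surjective, the two finite sets are in one-to-one correspondence, which is exactly the claimed equality of cardinalities.

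There is essentially no obstacle remaining at this stage: all the genuine difficulty has already been absorbed into the proof of Theorem~\ref{bij_12231}, where one must verify that the grid-based insertion operations $\Do$ and $\Co$ faithfully mirror the increment structure of restricted growth functions and that the pattern $2314$ (together with the mesh pattern $\meshpatt$) corresponds exactly to the {\rgf} pattern $2231$. Once that correspondence is in hand, the corollary is a formality. Accordingly, I expect the proof to consist of a single sentence citing Theorem~\ref{bij_12231} and observing that equinumerosity follows from the existence of the bijection.

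\begin{proof}
  This is an immediate consequence of Theorem~\ref{bij_12231}: since
  $\phi$ is a bijection between $\Sort_n(\sigma)$ and
  $\mathcal{R}_n(12231)$, the two sets have the same cardinality.
\end{proof}
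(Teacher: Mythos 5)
Your proposal is correct and matches the paper exactly: the corollary is stated immediately after Theorem~\ref{bij_12231} with no separate proof, precisely because equinumerosity is an immediate consequence of the bijection $\phi$.
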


The enumeration of these {\rgfs} follows from the results in~\cite{JM}, where
it is shown that $12231$ is Wilf-equivalent to $12332$ (see
Table~\ref{table_wilf_class} here). Moreover, they also show that $1221$-avoiding
{\rgfs} are enumerated by the Catalan numbers. Hence, as a consequence of
Theorem 31 in~\cite{JM}, we immediately obtain the following formula for
$\sigma$-sortable permutations:
$$|\Sort_n(\sigma )| = \sum_{k=0}^{n-1} \binom{n-1}{k} \mathfrak{c}_k.
$$
The above sequence is A007317 in~\cite{Sl}.

\begin{table}
  \begin{center}
    \renewcommand{\arraystretch}{0.9}
    \begin{tabular}{c|c|c}
      Pattern $p$ & Formula for $|\mathcal{R}_n(p)|$ & OEIS \\
      \hline
      & & \\
      12123, 12132, 12134, 12213, & & \\
      12231, 12234, 12312, 12321,
      & $\displaystyle{\sum_{k=0}^{n-1} \binom{n-1}{k} \mathfrak{c}_k}$ & $A007317$ \\
      12323, 12331, 12332 & & \\
    \end{tabular}
  \end{center}
  \caption{The eleven patterns of the Wilf-class enumerated by
    $A007317$, see~\cite[Table~3]{JM}.}\label{table_wilf_class}
\end{table}

\section{Combinatorial proofs for pattern-avoiding restricted growth
  functions}\label{section_enum}

In the previous section we have completely solved the problem of
counting $\sigma$-sortable permutations, by explicitly finding a
bijection with the class of $12231$-avoiding
{\rgfs}, whose enumeration is known~\cite{JM}. However, this does not
provide a clear understanding of why the resulting counting sequence is
the binomial transform of Catalan numbers. What we would like to have is
a transparent bijective link between $\sigma$-sortable permutations and 
some combinatorial objects whose structure immediately reveals the
connection with this counting sequence.

The current section is devoted to illustrating some bijections involving sets
of {\rgfs} avoiding a certain pattern. Although the enumerations of these
sets are known, essentially as corollaries of the general mechanism presented
by Jel\'{\i}nek and Mansour~\cite{JM}, we provide new direct combinatorial
proofs, exhibiting links with other well studied combinatorial
structures. More precisely, we start by describing a presumably new bijection
between $\mathcal{R}_n(1221)$ and the set of Dyck paths of semilength
$n$. Moreover, for some of the patterns $p$ listed in
Table~\ref{table_wilf_class}, we describe bijections between $\mathcal{R}(p)$
and other combinatorial objects, such as labeled Motzkin paths and
pattern-avoiding permutations. Finally, we define a bijection between
$\mathcal{R}(12321)$ and $\mathcal{R}(12231)$ that, together with some of the
previous results, gives a transparent bijective argument that fully explains
the enumeration of $\sigma$-sortable permutations.

\subsection{The pattern $1221$}

The following lemma is contained in \cite{CDDGGPS} and provides a nice
characterization of $1221$-avoiding  {\rgfs}.

\begin{lemma}[\cite{CDDGGPS}, Lemma 6.2]\label{char}
  Let $R$ be a {\rgf}. Then $R \in \mathcal{R}(1221)$ if and only if the
  subword $w(R)$ obtained by removing the first occurrence of each letter in
  $R$ is weakly increasing.
\end{lemma}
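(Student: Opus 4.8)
The plan is to prove both implications by contraposition, in each case setting up a direct correspondence between an occurrence of $1221$ in $R$ and a descent in the subword $w(R)$. The one structural fact I will lean on throughout is the defining property of an {\rgf}: for values $a<b$, the first occurrence of $a$ in $R$ precedes the first occurrence of $b$ (since the value $b$ can only appear after all of $1,\dots,b-1$ have appeared). Recall also that an occurrence of $1221$ in $R$ is a subsequence $r_{i_1}r_{i_2}r_{i_3}r_{i_4}$ with $i_1<i_2<i_3<i_4$ whose standardisation is $1221$, i.e.\ two values $a<b$ with $r_{i_1}=r_{i_4}=a$ and $r_{i_2}=r_{i_3}=b$.

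First I would show that if $R$ contains $1221$ then $w(R)$ is not weakly increasing. Given an occurrence with values $a,b,b,a$ as above, observe that $r_{i_3}=b$ is \emph{not} a first occurrence, since it is preceded by $r_{i_2}=b$; likewise $r_{i_4}=a$ is not a first occurrence, since it is preceded by $r_{i_1}=a$. Hence both $r_{i_3}$ and $r_{i_4}$ survive the deletion of first occurrences and appear, in this relative order, in $w(R)$, with $r_{i_3}=b>a=r_{i_4}$. This is a descent in $w(R)$, so $w(R)$ fails to be weakly increasing.

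Conversely, I would show that any descent in $w(R)$ forces an occurrence of $1221$ in $R$. Suppose $w(R)$ has a descent, so there are positions $s<t$ of $R$, both non-first occurrences, with $r_s=b>a=r_t$. Let $\alpha$ and $\beta$ be the positions of the first occurrences of the values $a$ and $b$, respectively. Since $a<b$, the {\rgf} property gives $\alpha<\beta$; and since $r_s=b$ is a repeated (non-first) occurrence, we have $\beta<s$. Therefore $\alpha<\beta<s<t$ is a subsequence of $R$ carrying values $a,b,b,a$, that is, an occurrence of $1221$.

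Combining the two contrapositives yields the equivalence. The argument is short and largely bookkeeping; the only step requiring genuine care — and the one place where the {\rgf} hypothesis is actually used — is the reverse direction, where the leading $a$ of the pattern $a\,b\,b\,a$ is not supplied by the descent itself but is furnished precisely by the ordering of first occurrences ($\alpha<\beta$). I do not anticipate any real obstacle beyond keeping the distinction between first and repeated occurrences straight.
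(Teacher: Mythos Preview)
Your argument is correct. Note, however, that the paper does not supply its own proof of this lemma: it is quoted from~\cite{CDDGGPS} and stated without proof, so there is no in-paper argument to compare against. Your proof is the natural one and matches the standard reasoning; the only cosmetic point is that in the forward direction you exhibit an \emph{inversion} in $w(R)$ rather than a consecutive descent, but of course any inversion already witnesses that $w(R)$ is not weakly increasing, so the conclusion stands.
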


As an immediate consequence, we have the following corollary.

\begin{cor}\label{active_sites_1221}
  Let $R=r_1 \ldots r_n \in \mathcal{R}(1221)$ and $M=\max(R)$. If $R$ has no
  repeated elements let $t=1$, otherwise let $t$ be the maximum among
  repeated elements of $R$.  Then $r_1 \ldots r_n j \in \mathcal{R}(1221)$ if
  and only if $t\le j\le M+1$.
\end{cor}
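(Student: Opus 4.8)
The plan is to prove Corollary~\ref{active_sites_1221} directly from the characterization in Lemma~\ref{char}, by understanding exactly how appending a letter $j$ to $R$ affects the subword $w(R)$ of non-first occurrences. Let me think about what that subword looks like and how it changes.

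First I would recall that $w(R)$ is obtained by deleting the first occurrence of each distinct letter of $R$. Since $R$ is an {\rgf} with $M=\max(R)$, the distinct letters are exactly $1,2,\dots,M$, and $R\in\mathcal{R}(1221)$ means $w(R)$ is weakly increasing. Now append $j$ to get $R'=r_1\dots r_n j$. Clearly $R'$ is still an {\rgf} iff $1\le j\le M+1$, so the real content is the $1221$-avoidance. There are two cases for how $w$ changes:

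Case $j=M+1$: here $j$ is a first occurrence of a brand-new letter, so $w(R')=w(R)$, which is weakly increasing, and $R'\in\mathcal{R}(1221)$. This explains the upper endpoint $j\le M+1$ of the claimed range.

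Case $1\le j\le M$: here $j$ has already appeared in $R$, so $j$ is a non-first occurrence and $w(R')=w(R)\,j$, i.e. we append $j$ to the end of $w(R)$. Since $w(R)$ is already weakly increasing, $w(R')$ is weakly increasing iff $j$ is at least the last letter of $w(R)$. So I need to identify the last letter of $w(R)$ and show it equals $t$, the maximum repeated letter of $R$ (or that the constraint is vacuous, $t=1$, when there are no repeats).

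The key observation I would prove is: \emph{the largest value occurring in $w(R)$ equals the maximum among the repeated letters of $R$, and moreover $w(R)$ being weakly increasing forces its last letter to be this maximum.} Indeed, a letter $a$ appears in $w(R)$ iff $a$ occurs at least twice in $R$ (first occurrence deleted, at least one copy survives), so the set of letters of $w(R)$ is precisely the set of repeated letters, with maximum $t$; since $w(R)$ is weakly increasing its final letter is exactly $\max=t$. Therefore $w(R')=w(R)\,j$ is weakly increasing iff $j\ge t$. Combined with the new-letter case, $R'\in\mathcal{R}(1221)$ iff $t\le j\le M+1$, which is the assertion. When $R$ has no repeated letters, $w(R)$ is empty, there is no lower constraint from the increasing condition, and setting $t=1$ recovers the correct range $1\le j\le M+1$ since every letter of an {\rgf} is $\ge 1$.

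The main (and only real) obstacle is the bookkeeping identifying the last letter of $w(R)$ with $t$; once that is pinned down, everything reduces to the trivial fact that appending a value to a weakly increasing word preserves monotonicity iff the value dominates the current last letter. I expect the whole argument to be short, with the bulk of the care going into the empty-$w(R)$ edge case and into confirming that $j=M+1$ genuinely produces no new non-first occurrence.
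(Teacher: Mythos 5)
Your proof is correct and follows exactly the route the paper intends: the paper states Corollary~\ref{active_sites_1221} as an immediate consequence of Lemma~\ref{char}, and your argument is precisely that consequence spelled out, analyzing how $w(R)$ changes when $j$ is appended and identifying its last letter with the maximum repeated letter $t$. The edge cases you handle (empty $w(R)$ and $j=M+1$ creating no new non-first occurrence) are the right ones, so nothing is missing.
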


The previous corollary can be rephrased using the language of
\emph{generating trees} (see for instance~\cite{BDLPP}).  In particular,
we say that an integer $x$ is an \emph{active site} of the  {\rgf} $R\in \mathcal{R}(1221)$ whenever adding $x$ at the end
of $R$ returns another  {\rgf} belonging to
$\mathcal{R}(1221)$ (whose length is of course increased by one). Due to
Corollary~\ref{active_sites_1221}, the set of active sites of $R$ is the
interval $\lbrace t,t+1,\dots,M,M+1 \rbrace$ and thus there are
$M+1-t+1$ active sites, where $M$ and $t$ are as in the corollary. In
the language of generating trees, any  {\rgf}
obtained from $R$ this way is called a \emph{child} of $R$.

For the next theorem, we recall the definition of a \emph{double rise}
in a Dyck path, which is an occurrence of the consecutive pattern
$\U \U$.

\begin{theorem}\label{enum_1221}
  There is a bijection $\psi$ from $\mathcal{R}_n(1221)$ to the set of Dyck
  paths of semilength $n$, such that the maximum of $R\in\mathcal{R}_n(1221)$
  equals one plus the number of double rises in the path $\psi(R)$.  As a
  consequence, denoting by $f_{n,k}$ the number of elements in
  $\mathcal{R}_n(1221)$ whose maximum is $k$, we get that
  $f_{n,k}=\mathfrak{n}_{n,k}$, where $\mathfrak{n}_{n,k}$ is the $(n,k)$-th
  Narayana number.
\end{theorem}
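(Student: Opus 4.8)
The plan is to construct $\psi$ recursively using the generating-tree structure of $1221$-avoiding {\rgfs} provided by Corollary~\ref{active_sites_1221}, matching it to a natural generating tree for Dyck paths indexed by the number of double rises. Recall that a {\rgf} $R$ with maximum $M$ and largest repeated element $t$ (or $t=1$ if none) has exactly $M+2-t$ active sites, namely the interval $\{t,\dots,M+1\}$; appending any of these yields a child whose statistics I must control. I would first isolate what happens to the pair $(M,t)$ under each allowed insertion: appending $j=M+1$ increases the maximum and leaves the object with no new repetition among the top value, whereas appending $j\le M$ creates (or confirms) a repeated element and can raise $t$. The key bookkeeping is that the quantity $k=M$ (the maximum) is exactly the statistic I want to read off as one plus the number of double rises, so the recursion must send ``append $M+1$'' to an operation on Dyck paths that adds one double rise, and ``append $j\le M$'' to operations that do not.

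Concretely, I would set up the matching parameter to be the number of active sites, which for a Dyck path should correspond to something like the height of the last point or the number of available ``landing spots'' for an appended peak/valley. First I would define $\psi$ on the base case $R=1$ (the path $\U\D$ of semilength $1$, with zero double rises, matching $M=1$). Then, inductively, given $\psi(R)=P$ for a {\rgf} $R$ of length $n$, I would describe how each of the $M+2-t$ children of $R$ maps to a distinct Dyck path of semilength $n+1$, and conversely how every Dyck path of semilength $n+1$ arises exactly once this way. The cleanest route is to encode a Dyck path by the sequence of heights reached, and to read insertions into $R$ as insertions of a $\U\D$ pair at a prescribed height: inserting so as to create a double rise (raising the launch height) corresponds to $j=M+1$, while inserting a non-double-rise peak corresponds to the smaller choices of $j$. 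I would verify that the count of insertion positions on the path side equals $M+2-t$, so that the two generating trees are isomorphic and $\psi$ is forced to be a bijection.

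The main obstacle I anticipate is pinning down the path-side generating rule so that its number of children equals $M+2-t$ \emph{and} tracks the double-rise statistic correctly at the same time; the subtlety is that $t$ (the largest repeated value) is a somewhat global feature of $R$, and I must find the matching global feature of the associated Dyck path (most likely the height from which the final descent begins, or equivalently the position of the last double rise). Establishing that these two parameters evolve in lockstep under all allowed moves is where the real work lies; once the two trees are shown to obey the same succession rule with the same labels, the bijection and the statistic-preservation property follow simultaneously. For the final sentence, the statement $f_{n,k}=\mathfrak{n}_{n,k}$ then reduces to the classical fact that Dyck paths of semilength $n$ with a prescribed number of double rises (equivalently, $k-1$ double rises, i.e. $k$ peaks) are counted by the Narayana numbers, which I would invoke directly once $\psi$ is in hand.
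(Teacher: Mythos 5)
Your plan is essentially the paper's own proof: it matches the generating tree of $\mathcal{R}(1221)$ given by Corollary~\ref{active_sites_1221} with the classical peak-insertion generating tree of Dyck paths (where the path-side label is, as you guess, determined by the length of the final descending run), observes that the maximum of the {\rgf} increments exactly when the corresponding insertion creates a double rise, and concludes via the Narayana count of Dyck paths by double rises. The verification you defer---that appending $j\in\{t,\dots,M+1\}$ and inserting a peak at the matching slot of the final descent evolve the two labels in lockstep (giving $2$ children's labels of $2,\dots,M+2-t,$ and $k+1$ respectively)---is precisely the short case analysis the paper carries out, so your approach goes through as stated.
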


\begin{proof}
  Recall from~\cite{BDLPP} that every Dyck path $\widetilde{P}$ of semilength
  $n+1$ is obtained (in a unique way) from a Dyck path $P$ of semilength $n$
  by inserting a peak $\U \D$ either before a $\D$-step in the last
  descending run of $P$ or after the last $\D$-step. This construction gives
  rise to a well known generating tree for Dyck paths, such that the number
  of active sites of a path $P$ is $k+1$, where $k$ is the length of the
  maximal suffix of $P$ entirely made of $\D$-steps. The path $\widetilde{P}$
  is therefore a child of $P$ in the associated generating tree. Our goal is
  to define (in a recursive fashion) a bijection $\alpha$ between the
  generating tree of $\mathcal{R}(1221)$ and the generating tree of Dyck
  paths. In other words, we wish to show that $\alpha$ is a bijection
  preserving both the size (that is, a {\rgf} $R\in \mathcal{R}_n(1221)$ is
  mapped to a Dyck path of semilength $n$) and the number of active sites.

  We start by setting $\alpha(1)=\U \D$. Note that $1$ has two active
  sites, since the children of $1$ are $11$ and $12$. The path
  $\U \D$ has two active sites as well, since its children are
  $\U \U \D \D$ and $\U \D \U \D$. Now let $R=r_1 \ldots r_n$ and
  $\alpha(R)=p_1 \ldots p_{2n}$, for some $n \ge 1$. Suppose that the
  number of active sites of both $R$ and $\alpha(R)$ is $k$. Let
  $M=\max(R)$ and let $t$ be the maximum element of $R$ that is not a 
  ltr-maximum of $R$. By Corollary~\ref{active_sites_1221}, the active
  sites of $R$ form the interval $\lbrace t,t+1,\dots,M,M+1 \rbrace$,
  with $M+1-t+1=k$ by hypothesis. Moreover, the length of the maximal
  suffix of $\D$-steps of $\alpha(R)$ is $k-1$. We shall
  describe $\alpha$ on the children of both $R$ and $\alpha(R)$, and
  show that the number of active sites is still preserved.
  \begin{itemize}
  \item The child of $R$ corresponding to the active site $M$ is mapped to
    the path obtained from $\alpha(R)$ by inserting a new peak $\U\D$
    immediately after the last $\D$-step of $\alpha(R)$. Here the active
    sites of the resulting sequence are $M+1-M+1=2$. The same happens for the
    resulting Dyck path, since the length of the maximal suffix of $\D$-steps
    is $1$.

  \item For $i=t,\dots,M-1$, the child of $R$ corresponding to the active
    site $i$ is mapped to the path obtained from $\alpha(R)$ by inserting a
    new peak $\U \D$ immediately before the $(M+1-i)$-th $\D$ step of the
    last descending run. The number of active sites of the resulting {\rgf}
    is then $M+1-i+1=M+2-i$, which is also the length of the maximal suffix
    of $\D$-steps of the resulting path.

  \item Finally, the child of $R$ corresponding to the active site $M+1$ is
    mapped to the path obtained from $\alpha(R)$ by inserting a new peak
    $\U\D$ immediately before the first $\D$-step of the last descending run
    of $\alpha(R)$. In this case the number of active sites of the resulting
    {\rgf} is $M+2-(t+1)=k+1$. Moreover, the number of active sites of the
    resulting path is also $k+1$, since the length of its maximal suffix of
    $\D$-steps is increased by one with respect to $\alpha(R)$.
  \end{itemize}

  Therefore $\alpha$ is a bijection between the two generating trees, as
  desired. To conclude, observe that the number of double rises in
  $\alpha(R)$ is equal to $\max(R)-1$. Indeed, by definition of $\alpha$,
  each double rise in $\alpha(R)$ corresponds to the first occurrence of an
  integer in $R$, except for the first occurrence of $1$ (which does not
  create a double rise). As is well known (see for example~\cite{De}), the
  number of Dyck paths of semilength $n$ with $k-1$ double rises is given by
  $\mathfrak{n}_{n,k}$, which gives the desired equality
  $f_{n,k}=\mathfrak{n}_{n,k}$.
\end{proof}

\pagebreak 
\begin{cor}\label{enumeration}
  Let $n \ge 0$ and $g_n= |\mathcal{R}_{n}(12332)|$. Denote by
  $g(n,k)$ the number of elements in $\mathcal{R}_n(12332)$ whose
  maximum is $k$, for $1 \le k \le n$. Then
  $$g(n+1,k+1)=
    \sum_{j=k}^{n} \binom{n}{j} \mathfrak{n}_{j,k}.
  $$
\end{cor}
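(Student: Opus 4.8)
The plan is to establish the formula for $g(n+1,k+1)$ by connecting the refined enumeration of $\mathcal{R}_n(12332)$ to the Narayana refinement of $\mathcal{R}_n(1221)$ already obtained in Theorem~\ref{enum_1221}. The starting point is the Wilf-equivalence recorded in Table~\ref{table_wilf_class}: the patterns $12231$ and $12332$ (together with $1221$-type refinements) lie in the same Wilf-class, enumerated by A007317. Since the binomial transform of the Catalan numbers is what governs the total count $g_n$, I expect the binomial coefficient $\binom{n}{j}$ in the claimed formula to arise from a decomposition that splits a $12332$-avoider into a ``free'' part of size $n-j$ and a $1221$-avoiding core of size $j$, with the Narayana number $\mathfrak{n}_{j,k}$ tracking the maximum $k$ of that core via Theorem~\ref{enum_1221}.

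First I would make precise how avoidance of $12332$ decomposes. By Lemma~\ref{RGF_prop}, avoiding $12332$ is equivalent to avoiding $2332$ (standardizing the pattern and stripping the leading prefix $1$), so I may work with the shorter forbidden pattern. The key structural step is to identify, inside an \rgf{} $R$ of length $n+1$ with $\max(R)=k+1$, a canonical $1221$-avoiding substructure together with the positions that may be inserted ``freely'' without creating a $2332$ (equivalently $12332$). The goal is to show that such an $R$ is determined by the choice of a $1221$-avoiding \rgf{} of some length $j$ and maximum $k+1$ (contributing $\mathfrak{n}_{j,k+1}$ by Theorem~\ref{enum_1221}, reindexed), together with a choice of $n-j$ further positions distributed among the remaining $n$ slots, yielding the factor $\binom{n}{j}$. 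Carefully matching the index shifts — the statement counts length $n+1$ and maximum $k+1$ against $\mathfrak{n}_{j,k}$ for $j$ ranging over $k \le j \le n$ — is where I would spend the most care, since an off-by-one in either the length or the maximum propagates into the summation bounds.

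The main obstacle I anticipate is proving that the decomposition is a genuine bijection rather than merely an inequality of counts: I must show both that every $12332$-avoider arises uniquely from such a (core, insertion) pair, and that every such pair reassembles into a $12332$-avoider with the prescribed maximum. The forward direction requires arguing that deleting the freely-inserted letters from $R$ leaves a $1221$-avoiding \rgf{} of the correct maximum; this should follow because a $1221$ pattern among the core letters would, together with an appropriate first occurrence, lift to a $12332$ in $R$. The reverse direction requires showing the reinsertion never creates $12332$, which is where the ``freeness'' of the inserted positions must be defined exactly so as to preclude completing a $2332$ occurrence. Once the bijection is set up, summing $\mathfrak{n}_{j,k}\binom{n}{j}$ over the admissible range of $j$ is immediate and yields the stated closed form.
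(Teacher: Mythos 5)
Your overall strategy coincides with the paper's own proof, which quotes the observation of Jel\'{\i}nek and Mansour that every $12332$-avoiding {\rgf} of length $n+1$ is obtained by fixing $r_1=1$, choosing $n-j$ of the remaining $n$ positions to hold further $1$s, and filling the other $j$ positions with a $1221$-avoiding {\rgf} incremented by one, after which Theorem~\ref{enum_1221} supplies the Narayana refinement. However, your first step is not correct as stated. Lemma~\ref{RGF_prop} does not reduce $12332$ to $2332$: that lemma prepends (or strips) an initial run $12\dots(t-1)$ only when the \emph{standardized} pattern begins with a letter $t\ge 2$, which is exactly how the paper passes from $12231$ to $2231$. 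Here $12332$ begins with $1$, and the stripped word $2332$ is not standardized --- $\std(2332)=1221$ --- so under the paper's definition of containment it is not a pattern at all; if you instead read $2332$ as its standardization, your claim becomes $\mathcal{R}(12332)=\mathcal{R}(1221)$, which is false (the {\rgf} $1221$ avoids $12332$ but contains $1221$). The statement you actually need, and which must be proved directly rather than cited from Lemma~\ref{RGF_prop}, is: $R$ contains $12332$ if and only if the subword of letters of $R$ that are $\ge 2$ contains $1221$. The point is that the role of the pattern's ``$1$'' can always be played by the initial letter $r_1=1$ of the {\rgf}, while the letters playing $2,3,3,2$ are forced to be $\ge 2$.

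The second gap is that the decomposition carrying your binomial coefficient is never pinned down, and it is the whole content of the argument: the ``free'' letters are exactly the occurrences of $1$ other than $r_1$, and the core is the subword of letters $\ge 2$ with every entry decreased by $1$ (the decrement is what makes the core an honest {\rgf}). This identification also settles the off-by-one you flag: if $\max(R)=k+1$ then the core has maximum exactly $k$, so its count is $\mathfrak{n}_{j,k}$ by Theorem~\ref{enum_1221} (as in your opening paragraph), not $\mathfrak{n}_{j,k+1}$ (as in your later one). With it, both directions of your bijection are immediate: an occurrence of $1221$ in the core lifts, via $r_1$, to an occurrence of $12332$ in $R$; conversely, in any occurrence of $12332$ the letters playing $2,3,3,2$ lie in the core, so the placement of the $1$s is genuinely unconstrained and contributes $\binom{n}{j}$. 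As written, your plan defers precisely these two points, so the route is the right one but the proposal is an outline of the paper's argument rather than a proof of it.
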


\begin{proof}
  As observed in~\cite{JM}, every $12332$-avoiding {\rgf} of length $n+1$ can
  be obtained by choosing $n-j$ positions for the $1$s (except for the first
  $1$, which is fixed) and then choosing a {\rgf}
  $\widehat{R} \in \mathcal{R}_j (1221)$ for the remaining $j$ spots (where
  the elements of $\widehat{R}$ incremented by 1 will be inserted). In
  particular, if the maximum of $\widehat{R}$ is $k$, then the resulting
  {\rgf} has maximum $k+1$. So, as a consequence of Theorem~\ref{enum_1221},
  we have $g(n+1,k+1)= \sum_{j=k}^{n} \binom{n}{j} \mathfrak{n}_{j,k}$.
\end{proof}

As it turns out, the formula in Corollary~\ref{enumeration} also enumerates
$\sigma$-sortable permutations according to the number of their ltr-minima. A
proof will be given in upcoming sections (Proposition~\ref{narayana_321}
and Theorem~\ref{bij_12321_12231}) by means of a bijection between $12231$- and
$12321$-avoiding {\rgfs}. However, although we have a precise geometrical
description of $\Sort(\sigma)$, we have not been able to find a direct proof
of this.

\begin{openpr}\label{open_prob_distr}
  Prove directly (that is, without using a bijection involving different
  objects) that the number of $132$-sortable permutations of length
  $n+1$ with $k+1$ left-to-right minima is given by
  $$\sum_{j=k}^{n} \binom{n}{j} \mathfrak{n}_{j,k}.
  $$
\end{openpr}

\subsection{The patterns $12323$ and $12332$}\label{section_cont_frac}

Let
$$F(x)=
\sum_{n \ge 0} \left( \sum_{k=0}^{n-1}\binom{n-1}{k}\mathfrak{c}_k \right) x^n
$$
be the ordinary
generating function of $\sigma$-sortable permutations (or, equivalently,
of $\mathcal{R}(12323)$ and of $\mathcal{R}(12332)$). Then $F(x)$ can be
expressed using the following continued fraction (see, for
example,~\cite{B,F}):
$$F(x)=
\cfrac{1}{1-2x-\cfrac{x^2}{1-3x-\cfrac{x^2}{1-3x-\cfrac{x^2}{1-3x-\dots}}}}
$$

A nice combinatorial interpretation of this continued fraction can be given
in terms of labeled Motzkin paths, via Flajolet's general
correspondence~\cite{F}. More precisely, $|\Sort_{n+1}(\sigma)|$ is the
number of Motzkin paths of length $n$ such that each horizontal step at
height zero has two types of labels $\ell_0$, $\ell_1$ and each horizontal
step at height at least one has three types of labels
$\ell_0,\ell_1,\ell_2$. Let $\M^{lab}_n$ be the set of such labeled Motzkin
paths of length $n$. We now define a map $\beta$ from $\M^{lab}_n$ to {\rgfs}
of length $n+1$ (see Figure~\ref{Motzkin_figure}). Let $P \in \M^{lab}_n$ and
let $\Delta$ be an initially empty stack. We construct a {\rgf} $R$ by
scanning from left to right the labels of $P$ (including $\U$ and $\D$ for
upstep and downstep, respectively). We start by setting $R=1$. Then we append
a new rightmost element to $R$ according to the following rules, where $L$
denotes the currently scanned label:
\begin{itemize}
\item if $L=\U$ then append a new strict maximum $M$ and push $M$ onto
  $\Delta$;
\item if $L=\D$ then append $\top(\Delta)$ and pop it from $\Delta$;
\item if $L=\ell_0$, then append a new strict maximum (without
  pushing it onto $\Delta$);
\item if $L=\ell_1$ then append $1$;
\item if $L=\ell_2$ then append $\top(\Delta)$ (without popping
  it from $\Delta$).
\end{itemize}

In other words, $\U$ corresponds to the first occurrence of a letter $x$ that
appears at least twice in $R$, $\D$ to the last occurrence of such a letter,
and ${\ell}_2$ to an occurrence of such an $x$ that is neither the first nor
the last.  Moreover, the label $\ell_0$ corresponds to an element $x\neq 1$
appearing only once and the label $\ell_1$ corresponds to the element 1.

It is worth noting the correspondence between the labels of a Motzkin
path~$P$ described above and properties of the set partition associated (in
Section~\ref{prelim}) to the {\rgf} $R=\beta(P)$.  Namely, if $B$ is a block
of cardinality at least 2 in such a partition and $B$ doesn't contain 1, then
$\U$, $\D$ and $\ell_2$ correspond, respectively, to the least, the largest
and any of the remaining elements of the block. Furthermore,~$\ell_0$
corresponds to a singleton block not containing $1$ and~$\ell_1$ corresponds
to the elements of the block containing $1$. With this correspondence the
auxiliary stack~$\Delta$ is seen to keep track, at each stage of the
construction of $R$, of the open blocks in the corresponding partition, that
is those blocks that have not yet received all their elements.  

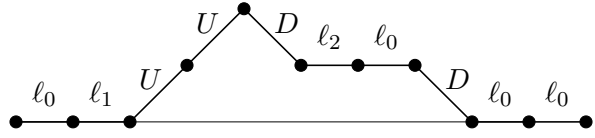
\begin{figure}
  \begin{center}
    \begin{tikzpicture}[scale=0.75, baseline=20.5pt]
      \draw [ultra thin] (0,0) -- (9,0);
      \draw [semithick] (0,0) -- (2,0);
      \draw [semithick] (2,0) -- (4,2);
      \draw [semithick] (4,2) -- (5,1);
      \draw [semithick] (5,1) -- (7,1);
      \draw [semithick] (7,1) -- (8,0);
      \draw [semithick] (8,0) -- (10,0);
      \filldraw (0,0) circle (3pt);
      \filldraw (1,0) circle (3pt);
      \filldraw (2,0) circle (3pt);
      \filldraw (3,1) circle (3pt);
      \filldraw (4,2) circle (3pt);
      \filldraw (5,1) circle (3pt);
      \filldraw (6,1) circle (3pt);
      \filldraw (7,1) circle (3pt);
      \filldraw (8,0) circle (3pt);
      \filldraw (9,0) circle (3pt);
      \filldraw (10,0) circle (3pt);
      \node[] at (0.5,0.5) {$\ell_0$};
      \node[] at (1.5,0.5) {$\ell_1$};
      \node[above,left] at (2.75,0.75) {$U$};
      \node[above,left] at (3.75,1.75) {$U$};
      \node[] at (4.75,1.75) {$D$};
      \node[] at (5.5,1.5) {$\ell_2$};
      \node[] at (6.5,1.5) {$\ell_0$};
      \node[] at (7.75,0.75) {$D$};
      \node[] at (8.5,0.5) {$\ell_0$};
      \node[] at (9.5,0.5) {$\ell_0$};
    \end{tikzpicture}
    \caption{The labeled Motzkin path corresponding, via the bijection
      $\beta$ of Theorem~\ref{bij_Motzkin}, to the restricted growth function
      $12134435367$, which in turn corresponds to the set partition
      13--2--479--56--8--10--11.}\label{Motzkin_figure}
  \end{center}
\end{figure}

\begin{theorem}\label{bij_Motzkin}
  The map $\beta$ is a bijection between $\M^{lab}_n$ and
  $\mathcal{R}_{n+1}(12323)$.
\end{theorem}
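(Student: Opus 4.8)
The plan is to verify that $\beta$ is well defined, that its image lies in $\mathcal{R}_{n+1}(12323)$, and then to exhibit an explicit inverse $\gamma$. The whole argument rests on a single structural principle: the auxiliary stack $\Delta$ forces the occurrences of each repeated letter of $\beta(P)$ to be \emph{nested} according to their values, and $12323$-avoidance is exactly the combinatorial shadow of this nesting.

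First I would check well-definedness. Starting from $R=1$, each $\U$ and each $\ell_0$ appends a letter one larger than the current maximum, whereas $\ell_1$, $\ell_2$ and $\D$ reuse letters already present; hence $\beta(P)$ satisfies the defining inequality $r_i\le 1+\max\{r_1,\dots,r_{i-1}\}$ of a {\rgf}. Because $P$ is a Motzkin path, every $\D$ is matched by an earlier $\U$ and every $\ell_2$ sits at height at least one, so $\Delta$ is nonempty precisely when the rules invoke $\top(\Delta)$. Thus $\beta(P)$ is a well-formed {\rgf}, and it has length $n+1$ since the seed $1$ is followed by one letter per step of $P$.

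For the forward inclusion I would argue that $\beta(P)$ avoids $12323$. Since $\U$ and $\ell_0$ create strictly increasing new maxima, for any two repeated letters $b<c$ the letter $c$ is pushed after $b$; by the LIFO discipline of $\Delta$ it is popped before $b$, so $c$ sits above $b$ on $\Delta$ throughout the entire span between its creation and its removal. As an $\ell_2$ reproduces only the current $\top(\Delta)$, the letter $b$ therefore has no occurrence strictly inside the span of $c$. But an occurrence $a\,b\,c\,b\,c$ of $12323$ (with $a<b<c$) would place an occurrence of $b$ strictly between two occurrences of $c$, hence strictly inside the span of $c$, which is impossible.

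Finally I would build the inverse $\gamma$ by scanning $R\in\mathcal{R}_{n+1}(12323)$, discarding the forced leading $1$ as a seed and converting each of the remaining $n$ letters into a step while maintaining a stack $\Delta$ of currently open letters: a $1$ gives $\ell_1$; the first occurrence of a value $v>1$ gives $\ell_0$ if $v$ never recurs and $\U$ (pushing $v$) if it does; a later occurrence of $v$ gives $\D$ (popping) if it is the last occurrence of $v$ and $\ell_2$ otherwise. I expect the main obstacle to be proving that this is consistent, namely that whenever a non-first, non-$1$ occurrence of $v$ is read, $v$ equals $\top(\Delta)$; this is exactly where $12323$-avoidance enters. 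Indeed, if some $w$ lay above $v$ on $\Delta$, then $w$ would have been opened after $v$ and, first occurrences of a {\rgf} being increasing, $w>v$; since $v\ge 2$ the leading $1$ precedes the first $v$, so the positions carrying $1,v,w,v,w$ would standardize to $12323$, a contradiction. Granting this, $\gamma(R)$ has equally many $\U$ and $\D$ with every prefix containing at least as many $\U$ as $\D$, so it is a genuine path in $\M^{lab}_n$; the rules of $\gamma$ invert those of $\beta$ step by step, giving $\beta\circ\gamma=\mathrm{id}$ and $\gamma\circ\beta=\mathrm{id}$ and hence the desired bijection.
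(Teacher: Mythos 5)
Your proof is correct, but it establishes bijectivity by a genuinely different route from the paper. The paper checks only that $\beta$ is injective and that its image lies in $\mathcal{R}_{n+1}(12323)$ (the image step is essentially your nesting argument: in a purported occurrence $abcb'c'$ of $12323$ one may take $b,c$ to be first occurrences, hence both pushed with $c$ above $b$, and the step producing $b'$ would require $b=\top(\Delta)$, forcing $c$ to have been popped already, contradicting $c'=c$); surjectivity is then obtained for free from a counting argument, since the equality $|\M^{lab}_n|=|\mathcal{R}_{n+1}(12323)|$ is already known by combining Flajolet's continued-fraction correspondence with Jel\'{\i}nek and Mansour's enumeration. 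You instead construct the inverse explicitly and prove it well defined, the crux being that $12323$-avoidance forces every non-first, non-final occurrence of a repeated letter $v$ to equal $\top(\Delta)$: any $w$ sitting above $v$ would be larger, still open (hence recurring later), and together with the leading $1$ would produce $1\,v\,w\,v\,w\simeq 12323$. Your route is longer but self-contained: it consumes no prior enumeration, and so it actually \emph{yields} the count of $\mathcal{R}_{n+1}(12323)$ via Flajolet's correspondence as a corollary, rather than using it as an input; given that the section's stated goal is a transparent combinatorial explanation of these counting sequences, that is a real advantage, whereas the paper's argument is shorter given the known results.

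One small imprecision in your forward direction: the blanket claim that for any two repeated letters $b<c$ the letter $c$ is popped before $b$ (so that the span of $c$ is nested in that of $b$) is false in general --- if $b$ is popped before $c$ is pushed, the two spans are disjoint, as in $R=12233$. The nesting holds exactly when $b$ recurs after the first occurrence of $c$, which is precisely the situation created by an occurrence of $12323$, so your contradiction goes through; you should just state that hypothesis (as the paper does, deducing from $b'=b$ following $c$ that $b$ is still in $\Delta$ when $c$ is pushed) rather than assert nesting for all pairs.
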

\begin{proof}
  It is straightforward to see that $\beta$ is injective and that $\beta(P)$
  is a {\rgf} for every $P \in \M^{lab}_n$. Since
  $|\M^{lab}_n|=|\mathcal{R}_n(12323)|$, we only need to show that $\beta(P)$
  avoids $12323$, for each $P \in \M^{lab}_n$. Suppose, for a contradiction,
  that $abcb'c'$ is an occurrence of $12323$ in $\beta(P)$. This implies, of
  course, that $b,c\neq 1$. Without loss of generality, we may assume 
  that $b$ and $c$ are the first occurrences of the corresponding integers in
  $\beta(P)$; then both $b$ and $c$ correspond to $\U$-steps in $P$ and are
  pushed onto $\Delta$. Moreover, since $b'=b$ and $b'$ follows $c$ in
  $\beta (P)$, when $c$ enters $\Delta$, $b$ is still in, and so $c$ lies
  above $b$ in $\Delta$. Now observe that the element $b'$ must correspond to
  either a $\D$-step or a horizontal step labeled $\ell_2$ of $P$. However,
  in both cases, when $b'$ is inserted into $\beta (P)$, $b$ has to be at the
  top of the stack, hence $c$ should have been popped. This would imply that
  there are no more occurrences of $c$ in $\beta (P)$ after $b'$, which is
  not the case, since $c'=c$.
\end{proof}

\begin{remark}\label{remark_12332_motzkin}
  If we replace the stack $\Delta$ with a queue $\Xi$, then the same map
  gives a bijection with  {\rgfs} avoiding
  $12332$. The proof is analogous to the previous one, and is omitted.
\end{remark}

\begin{remark}
  If we restrict the previous bijections to Motzkin paths with no horizontal
  steps labeled $\ell_1$, then we get bijections with {\rgfs} that avoid
  $1212$ (if we use a stack $\Delta$) or $1221$ (if we use a queue $\Xi$),
  provided that we remove the 1 at the beginning and decrease all the other
  elements by one. This follows again from the characterization of
  $\mathcal{R}(12323)$ and $\mathcal{R}(12332)$ given in~\cite{JM}. The
  corresponding continued fraction is then:
  $$
  F(x)=
  \cfrac{1}{1-x-\cfrac{x^2}{1-2x-\cfrac{x^2}{1-2x-\cfrac{x^2}{1-2x-\cdots}}}}
  $$

  This gives an alternative proof of the fact that
  {\rgfs} avoiding either $1221$ or $1212$ are enumerated by the
  Catalan numbers, whose generating function is known to be given by the
  above continued fraction.
\end{remark}

\begin{remark}\label{sumremark}
  As a consequence of the bijections in Theorem~\ref{bij_Motzkin} and
  Remark~\ref{remark_12332_motzkin}, the statistic ``sum of the numbers of
  $\U$ and $\ell_0$ steps'' in $\M^{lab}_n$ is equidistributed with the
  statistic ``(value of the) maximum minus one'' both in
  $\mathcal{R}_{n+1}(12332)$ and in $\mathcal{R}_{n+1}(12323)$. The same
  holds for the statistics ``number of labels $\ell_0$'' and ``number of
  singletons $\neq \{ 1\}$'', as well as for the statistics ``number of
  labels $\ell_1$'' and ``number of occurrences of $1$ minus one''. Some
  computations seem to suggest that the distribution of the maximum is the
  same for several other patterns of the same Wilf-class, namely
  $12123, 12132, 12213, 12231, 12312, 12321, 12331$, so we suspect that the
  same approach should lead to straightforward bijections, by suitably
  modifying the interpretation of the steps.\\
  For example, define $r_i$ to be a \emph{repeated} ltr-maximum of a {\rgf}
  $r_1r_2\ldots r_n$ if
  $r_i=\max\left\lbrace r_1,\dots,r_{i-1} \right\rbrace$.  Then steps having
  label $\ell_1$ seem to have the same distribution as the repeated
  ltr-maxima in $\mathcal{R}(12321)$ and $\mathcal{R}(12312)$, so in order to
  define a bijection with $\M^{lab}$ it could be enough to find the
  ``correct'' interpretations for steps having labels $\D$ and $\ell_2$.
\end{remark}

\subsection{The patterns $12321$ and $12312$}

In this subsection we deal with {\rgfs} avoiding the patterns $12321$ and 
$12312$, respectively, by exhibiting a connection with permutations avoiding
the patterns 321 and 312, respectively.

Let $R=r_1 \ldots r_n$ be a {\rgf}. Recall from Remark~\ref{sumremark} that
$r_i$ is said to be a repeated ltr-maximum when
$r_i=\max \left\lbrace r_1,\dots,r_{i-1} \right\rbrace$, that is, when $r_i$
is at least as great as all preceding letters, but not a ltr-maximum. Denote
by $\mathcal{R}^{n.r.}$ the set of {\rgfs} with no repeated ltr-maxima. The
notations $\mathcal{R}^{n.r.}_n$ and $\mathcal{R}^{n.r.}(Q)$, for a pattern
$Q$, are defined in the usual way. If $R=r_1\ldots r_n\in\mathcal{R}^{n.r.}$
is a {\rgf} with no repeated ltr-maxima, denote by $\widetilde{R}$ the
subsequence of $R$ obtained by deleting its ltr-maxima. Note that
$\widetilde{R}$ is not necessarily a {\rgf}. For example, if
$R=121311245246$, then $\widetilde{R}=111224$.

\begin{lemma}\label{lemma_321}
  Let $R \in \mathcal{R}^{n.r.}$. Then $R$ avoids $12321$ if and only
  $\widetilde{R}$ is weakly increasing.
\end{lemma}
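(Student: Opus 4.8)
The plan is to prove both implications by contraposition, using the basic record structure of {\rgfs}: in any {\rgf} the ltr-maxima are exactly the first occurrences of the values $1,2,\dots,\max(R)$, and these first occurrences appear in strictly increasing order of position (the first occurrence of a value $v$ precedes that of $v+1$). The hypothesis $R\in\mathcal{R}^{n.r.}$ enters through one sharpened observation: since $R$ has no repeated ltr-maxima, every kept entry of $\widetilde{R}$, i.e.\ every non-ltr-maximum $r_j$, satisfies the \emph{strict} inequality $r_j<\max\{r_1,\dots,r_{j-1}\}$, rather than merely $r_j\le\max\{r_1,\dots,r_{j-1}\}$.

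For the ``only if'' direction I would show the contrapositive: if $\widetilde{R}$ is not weakly increasing, then $R$ contains $12321$. A descent of $\widetilde{R}$ gives positions $p<q$ in $R$, both holding non-ltr-maxima, with $r_p=b>a=r_q$. Because $r_p$ is a non-ltr-maximum of $R\in\mathcal{R}^{n.r.}$, the sharpened observation yields $b<\max\{r_1,\dots,r_{p-1}\}$, so some value $c>b$, for instance $c=b+1$, has already appeared before position $p$. Taking the first occurrences of $a$, $b$ and $c$, one gets three entries at strictly increasing positions, all of them before $p$, carrying the values $a<b<c$; appending $r_p=b$ and then $r_q=a$ produces a subsequence with values $a,b,c,b,a$, which standardizes to $12321$. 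This is the step where the hypothesis is indispensable: without it a descent of $\widetilde{R}$ could have its larger entry equal to the running maximum (a repeated ltr-maximum), leaving no value available to play the ``$3$'', exactly as happens for $R=1221$.

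The ``if'' direction is the easy half, again by contraposition: suppose $R$ contains an occurrence of $12321$ at positions $i_1<\dots<i_5$ with values $a,b,c,b,a$ and $a<b<c$. Since the value $c$ sits at position $i_3<i_4<i_5$, both $r_{i_4}=b$ and $r_{i_5}=a$ are strictly smaller than an earlier entry, so neither is a ltr-maximum; hence both survive in $\widetilde{R}$. As $i_4<i_5$ and $b>a$, they form a descent of $\widetilde{R}$, so $\widetilde{R}$ is not weakly increasing. Note this direction uses only that $\widetilde{R}$ is obtained by deleting ltr-maxima, not the $\mathcal{R}^{n.r.}$ hypothesis.

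The only delicate point, and the one I would write out in full, is the existence and ordering of the first occurrences of $a,b,c$ in the ``only if'' direction: one must check that in an {\rgf} smaller values have earlier first occurrences and that $c=b+1$ genuinely occurs before $p$, both of which follow from the record structure of {\rgfs} together with the strict inequality supplied by $R\in\mathcal{R}^{n.r.}$. Everything else is routine bookkeeping of positions.
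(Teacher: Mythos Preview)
Your proposal is correct and follows essentially the same approach as the paper. The only difference is cosmetic: in the ``only if'' direction the paper finds an element $c>b$ preceding $b$, notes that $cba$ is an occurrence of $321$, and then invokes Lemma~\ref{RGF_prop} to upgrade this to an occurrence of $12321$; your explicit construction of the first occurrences of $a$, $b$, and $c=b+1$ is precisely an unrolling of that lemma.
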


\begin{proof}
  Suppose $\widetilde{R}= \ldots b a \ldots$, where $b>a$. Note that
  $b$ is not a repeated ltr-maximum of $R$, so there has to be an
  element $c$ in $R$ such that $c>b$ and $c$ comes before $b$. Then $R$
  contains an occurrence $cba$ of $321$ and therefore it also contains
  $12321$, by Lemma~\ref{RGF_prop}.

  Conversely, if $R$ contains an occurrence $abcb'a'$ of $12321$, then
  $b'$ precedes $a'$ in $\widetilde{R}$ and $b'>a'$, so $\widetilde{R}$ is not
  weakly increasing.
\end{proof}

We can now define a bijection between $\mathcal{R}^{n.r.}(12321)$ and
$\Av(321)$. In fact, the previous lemma roughly says that the combinatorial
structure of elements of $\mathcal{R}^{n.r.}(12321)$ is analogous to that of
permutations in $\Av(321)$, that is, they can both be written as a shuffle of
two weakly increasing sequences (namely, the strictly increasing sequence of
the ltr-maxima and the weakly increasing sequence of the remaining
elements). Let $R=r_1 \ldots r_n \in \mathcal{R}^{n.r.}(12321)$ and suppose
$\widetilde{R}=r_{i_1} \ldots r_{i_k}$, where $k\ge0$. Construct a
permutation of length $n$ by keeping the same positions for the ltr-maxima
and mapping $\widetilde{R}$ to a strictly increasing sequence
$S=s_1 \ldots s_k$ as follows:
\begin{itemize}
\item $s_1=r_{i_1}$;
\item $s_j=s_{j-1}+(r_{i_j}-r_{i_{j-1}})+1$, for $j \ge 2$.
\end{itemize}

Finally, in order to get a permutation that avoids $321$, insert the
remaining elements in increasing order (they will be the ltr-maxima).  For
instance, if $R=121314234$, then $\widetilde{R}=11234$, so we get $S=12468$
and the resulting permutation is
$\mathbf{3} \mathbf{5} 1 \mathbf{7} 2 \mathbf{9} 4 6 8$ (bold elements are
the ltr-maxima). Note that a {\rgf} having maximum $k$ (equivalently, with
$k$ ltr-maxima) is mapped to a permutation with $k$ ltr-maxima. It is
straightforward to prove that the resulting permutation avoids
$321$. Moreover, since $321$-avoiding permutations are uniquely determined by
positions and values of their ltr-maxima, the strictly increasing sequence
$S$ is enough to uniquely identify one such permutation. Therefore the map
defined above is injective. Finally, the construction proposed can be easily
inverted, so the map is a size-preserving bijection between
$\mathcal{R}^{n.r.}(12321)$ and $\Av(321)$. We thus have the following
result, whose proof immediately follows from the above discussion.

\begin{prop}\label{narayana_321} 
The number of  {\rgfs} in
  $\mathcal{R}_n ^{n.r.}(12321)$ is $\mathfrak{c}_n$. Moreover, the
  number of  {\rgfs} in
  $\mathcal{R}_n ^{n.r.}(12321)$ having maximum $k$ is given by
  $\mathfrak{n}_{n,k}$.
\end{prop}

Next we show that any  {\rgf} avoiding $12321$ is
obtained by choosing a sequence in $\mathcal{R}^{n.r.}(12321)$ and then
inserting some repeated ltr-maxima.

\begin{theorem}\label{thm_321}
  Let $R$ be a  {\rgf} and let $\alpha(R)$ be the
  sequence obtained from~$R$ by removing all the repeated ltr-maxima. 
  Then $\alpha(R)$ is a  {\rgf}. Moreover,
  $R$ avoids $12321$ if and only $\alpha(R)$ avoids $12321$.
\end{theorem}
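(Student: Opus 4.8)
The plan is to prove both assertions by tracking how the deletion $\alpha$ interacts with the running maxima of $R$. Write $R=r_1\cdots r_n$ and recall that in a {\rgf} the strict left-to-right maxima take the consecutive values $1,2,\dots,\max(R)$, whereas a repeated ltr-maximum $r_i$ equals $\max\{r_1,\dots,r_{i-1}\}$, a value already attained by an earlier strict ltr-maximum. The crucial observation is that deleting a repeated ltr-maximum never lowers the running maximum witnessed by the surviving entries: every strict ltr-maximum of $R$ is retained by $\alpha$, so for each surviving position $i$ the maximum of the retained entries preceding $i$ equals $\max\{r_1,\dots,r_{i-1}\}$.

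First I would show that $\alpha(R)$ is a {\rgf}. Its first entry is $r_1=1$, a strict (hence retained) ltr-maximum. For any later surviving entry $r_i$, the observation above gives that the maximum of the preceding surviving entries equals $\max\{r_1,\dots,r_{i-1}\}$, so the defining inequality $r_i\le 1+\max\{r_1,\dots,r_{i-1}\}$ for $R$ transfers verbatim to $\alpha(R)$. The same observation shows that a surviving entry is a strict ltr-maximum of $\alpha(R)$ exactly when it is one of $R$; consequently $\alpha(R)\in\mathcal{R}^{n.r.}$, and its subsequence of non-ltr-maxima---namely $\widetilde{\alpha(R)}$ in the notation of Lemma~\ref{lemma_321}---coincides with the subsequence $R^\ast$ of $R$ formed by the entries $r_i<\max\{r_1,\dots,r_{i-1}\}$ (those that are neither strict nor repeated ltr-maxima).

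For the equivalence I would establish a single characterization: $R$ contains $12321$ if and only if $R^\ast$ is not weakly increasing. For the forward direction, take an occurrence $r_{p_1}\cdots r_{p_5}$ of $12321$, with values $a<b<c$ arranged as $a,b,c,b,a$; the fourth and fifth entries are both strictly below the running maximum $c$ reached at position $p_3$, hence lie in $R^\ast$, where they appear in the decreasing order $b,a$. For the converse, an inversion of $R^\ast$ supplies entries $r_p>r_q$ with $p<q$, both strictly below their running maxima; choosing an earlier entry exceeding $r_p$ produces an occurrence of $321$ in $R$, which Lemma~\ref{RGF_prop} upgrades to an occurrence of $12321$.

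Finally I would combine the pieces. Since $\alpha(R)\in\mathcal{R}^{n.r.}$, Lemma~\ref{lemma_321} asserts that $\alpha(R)$ avoids $12321$ if and only if $\widetilde{\alpha(R)}$ is weakly increasing; by the identification $\widetilde{\alpha(R)}=R^\ast$ this is precisely the condition governing $R$ from the previous paragraph, which yields the stated equivalence. I expect the main obstacle to be the bookkeeping behind $\widetilde{\alpha(R)}=R^\ast$, that is, verifying that $\alpha$ alters neither which positions are strict ltr-maxima nor the relative order of the remaining entries. Once this is secured, both containment statements collapse onto the same monotonicity condition on $R^\ast$, and Lemma~\ref{lemma_321} completes the argument.
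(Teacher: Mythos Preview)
Your proof is correct, but it takes a genuinely different route from the paper. The paper's proof is brief and direct: since $\alpha(R)$ is a subsequence of $R$, the forward implication is immediate; for the converse it observes that in any occurrence $abcb'a'$ of $12321$ in $R$, the entries $b'$ and $a'$ are strictly below the running maximum (hence survive in $\alpha(R)$), while the first occurrence $c'$ of the value $c$ is a strict ltr-maximum (also surviving) and precedes $b'$, yielding $c'b'a'\simeq 321$ in $\alpha(R)$ and hence $12321$ by Lemma~\ref{RGF_prop}. You instead introduce the auxiliary sequence $R^\ast$ of entries strictly below their running maximum, prove the extension of Lemma~\ref{lemma_321} to arbitrary {\rgfs} (namely that $R$ avoids $12321$ if and only if $R^\ast$ is weakly increasing), identify $R^\ast$ with $\widetilde{\alpha(R)}$, and then invoke Lemma~\ref{lemma_321} for $\alpha(R)$. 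Your approach is longer but yields additional structure: the identification $\widetilde{\alpha(R)}=R^\ast$ and the byproduct $\alpha(R)\in\mathcal{R}^{n.r.}$ make transparent why deleting repeated ltr-maxima is harmless, and it unifies both directions into a single monotonicity condition rather than treating them asymmetrically. The paper's argument, on the other hand, is self-contained and does not need Lemma~\ref{lemma_321} at all.
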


\begin{proof}
  It is easy to check that $\alpha(R)$ is still a {\rgf} and clearly
  $\alpha(R)$ avoids $12321$ if $R$ does. On the other hand, suppose that $R$
  contains an occurrence $abcb'a'$ of $12321$. Note that $b'$ and $a'$ are
  not repeated ltr-maxima, so they are elements of $\alpha(R)$ and they
  follow $c$ in $R$. Let $c'$ be the first occurrence of the integer $c$ in
  $R$. Then $c' \in \alpha(R)$ and $c'$ precedes $b'$ in $\alpha(R)$, so
  $\alpha(R)$ contains an occurrence $c'b'a'$ of $321$, which is equivalent
  to containing $12321$.
\end{proof}

\begin{cor}\label{enumer_12321}
  For each $n \ge 1$, we have
  $$
    |\mathcal{R}_{n+1}(12321)|
    =\sum_{k=0}^n \binom{n}{k} \mathfrak{c}_k.
  $$
  Moreover, there are
  $\sum_{j=k}^{n} \binom{n}{j} \mathfrak{n}_{j,k}$
   {\rgfs} in $\mathcal{R}_{n+1}(12321)$ with maximum
  $k$.
\end{cor}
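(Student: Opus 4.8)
The plan is to prove the refined statement first---that $\mathcal{R}_{n+1}(12321)$ has $\sum_{j=k}^n\binom nj\mathfrak n_{j,k}$ elements of maximum $k$---and then recover the first identity by summing over $k$ and using $\sum_k\mathfrak n_{j,k}=\mathfrak c_j$. The engine is Theorem~\ref{thm_321}: removing all repeated ltr-maxima from $R$ produces a {\rgf} $\alpha(R)\in\mathcal R^{n.r.}(12321)$, and $R$ avoids $12321$ exactly when $\alpha(R)$ does. I would read this as a bijection between $\mathcal R_{n+1}(12321)$ and pairs $(B,S)$, where $B=\alpha(R)\in\mathcal R^{n.r.}(12321)$ is the base and $S$ is the set of positions at which the repeated ltr-maxima of $R$ occur. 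The crucial observation is that, once $B$ is fixed, a repeated ltr-maximum inserted at a given position is \emph{forced}: its value must equal the running maximum at that position. Hence $R$ is recovered from $(B,S)$, and conversely, inserting into $B$ at any choice of non-initial positions a letter equal to the current running maximum yields a {\rgf} whose repeated ltr-maxima are exactly the inserted ones and whose $\alpha$-image is again $B$.

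To count, I would group the elements of $\mathcal R_{n+1}(12321)$ by their base $B=\alpha(R)$ and by the common value $k=\max(R)=\max(B)$ (inserting letters equal to the running maximum does not change $\max$). The first letter of $R$ is the initial $1$, a genuine ltr-maximum, so it always belongs to $B$; thus every position of $S$ lies among the $n$ positions $2,\dots,n+1$. If the base retains all but $n-j$ of the letters of $R$, then $|S|=n-j$, and there are $\binom{n}{\,n-j\,}=\binom nj$ ways to choose $S$, each producing a distinct $R$. Invoking Proposition~\ref{narayana_321} to count the admissible bases of maximum $k$ by the Narayana number $\mathfrak n_{j,k}$, the contribution of each base length is $\binom nj\,\mathfrak n_{j,k}$, and summing gives $\sum_{j=k}^n\binom nj\mathfrak n_{j,k}$ {\rgfs} of length $n+1$ with maximum $k$. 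Summing over $k$ and collapsing $\sum_k\mathfrak n_{j,k}=\mathfrak c_j$ yields $\sum_{k=0}^n\binom nk\mathfrak c_k$, the first identity (the $j$-sum renamed).

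The main point to verify carefully is that $(B,S)\mapsto R$ is a genuine bijection: that inserting letters equal to the running maximum never disturbs which letters are genuine ltr-maxima of $R$, that the inserted letters are precisely the repeated ltr-maxima stripped off by $\alpha$, and that each such insertion preserves the {\rgf} property. All of this rests on Theorem~\ref{thm_321} together with the elementary fact that a letter equal to the current maximum is always admissible in a {\rgf}. The other delicate point is the index bookkeeping in the positional factor: one must match the length of the base and the size of $S$ to the parameters $j$ and $k$ so that $\binom nj$ and $\mathfrak n_{j,k}$ align with Proposition~\ref{narayana_321} (in particular the base of a length-$(n+1)$ word contributing the Narayana term $\mathfrak n_{j,k}$ carries $n-j$ inserted repeated ltr-maxima). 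Once this alignment is pinned down, both identities follow at once, the first being the row-sum of the refined one.
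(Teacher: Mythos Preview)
Your argument is exactly the one the paper intends: its one-line proof (``direct consequence of the results proved in this subsection, together with the fact that the first element of a {\rgf} cannot be a repeated ltr-maximum'') is precisely your $(B,S)$-decomposition via Theorem~\ref{thm_321}, with the binomial factor coming from the free choice of repeated-ltr-maximum positions among $\{2,\dots,n+1\}$ and the Narayana factor coming from Proposition~\ref{narayana_321}. The only thing to tighten is the index alignment you yourself flag as delicate: with $|S|=n-j$ the base has length $j+1$, so you should make explicit how the Narayana index from Proposition~\ref{narayana_321} matches the $\mathfrak n_{j,k}$ in the stated formula.
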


\begin{proof}
  This is a direct consequence of the results proved in this subsection,
  together with the fact that the first element of a {\rgf} cannot be a
  repeated ltr-maximum.
\end{proof}

\begin{remark}
  The same approach can be used to find a bijection between
  $\mathcal{R}^{n.r.}(12312)$ and $\Av(312)$. In fact, $312$-avoiding
  permutations are also uniquely determined by the positions and values of
  their ltr-maxima, and a completely analogous argument can be applied.  As a
  consequence, we also have
  $$|\mathcal{R}_{n+1}(12312)|=\sum_{k=0}^n \binom{n}{k} \mathfrak{c}_k.
  $$
\end{remark}

\subsection{A bijection between $\mathcal{R}(12321)$ and
  $\mathcal{R}(12231)$}

In Section~\ref{section_grid} we showed that $\sigma$-sortable permutations
are in bijection with {\rgfs} avoiding $12231$. Although the labeled Motzkin
path approach described in Section~\ref{section_cont_frac} could be fruitful,
a direct combinatorial enumeration for the pattern $12231$ seems to be rather
more complicated than for the patterns treated in the previous section. Here
we illustrate a bijection between $\mathcal{R}(12231)$ and
$\mathcal{R}(12321)$, thus obtaining an independent proof of the enumeration
of $\Sort(\sigma)$.

From now on we say that $r_{i_1} r_{i_2} r_{i_3}$ is an occurrence of the
pattern $\widetilde{2}31$ in $R$ if $r_{i_1} r_{i_2} r_{i_3}$ is an
occurrence of $231$ and $r_{i_1}$ is not a ltr-maximum of $R$ (that
is,~$r_{i_1}$ is not the first occurrence of the corresponding integer). Note
that $\mathcal{R}(12231)=\mathcal{R}(\widetilde{2}31)$ and also
$\mathcal{R}(12321)=\mathcal{R}(321)$, so we can focus on the patterns
$\widetilde{2}31$ and $321$ instead of $12231$ and $12321$,
respectively. Given a {\rgf} $R=r_1 \ldots r_n$, define
$\rmost(R,321)=i_1 i_2 i_3$, where $r_{i_i} r_{i_2} r_{i_3}$ is the
lexicographically rightmost occurrence of $321$ in $R$. In other words, for
any other occurrence $r_{j_i} r_{j_2} r_{j_3}$ of $321$, we must have either
$j_1<i_1$, or $j_1=i_1$ and $j_2<i_2$, or $j_1=i_1$, $j_2=i_2$ and
$j_3<i_3$. If $R$ avoids $321$, set $\rmost(321)= 000$ by
convention. Similarly, denote by $\lmost(R,\widetilde{2}31)=i_1 i_2 i_3$ the
lexicographically leftmost occurrence of $\widetilde{2}31$ in $R$. If $R$
avoids $\widetilde{2}31$, set $\lmost(R,\widetilde{2}31)=(n+1)(n+1)(n+1)$.

Now, let $R=r_1 \ldots r_n \in \mathcal{R}(\widetilde{2}31)$, a hypothesis we
will assume throughout the rest of this section. Define recursively a {\rgf}
$\gamma(R)$ as follows.

\begin{enumerate}
\item $R^{(0)}=R$.
\item For $t \ge 0$, if $R^{(t)}$ contains $321$, then $R^{(t+1)}$ is 
  obtained from $R^{(t)}$ by exchanging the elements $r_{i_1}$ and
  $r_{i_2}$, where $i_1 i_2 i_3=\rmost(R^{(t)},321)$.
\item Finally, define $\gamma(R)=R^{(k)}$, where $k$ is the minimum
  index such that $R^{(k)}$ avoids $321$.
\end{enumerate}

It is easy to verify that, at each step, $R^{(t)}$ is a  {\rgf}; moreover $R^{(k)}$ avoids $321$ by construction. Thus,
in order to prove that the map $\gamma$ is well defined, we have to show
that the integer $k$ indeed exists. This follows from the next lemma.

\begin{lemma}\label{lex_321}
  For every $t \ge 0$, we have $\rmost(R^{(t+1)},321) <_\ell \rmost(R^{(t)},321)$,
  where~$<_\ell$ denotes the lexicographical order.
\end{lemma}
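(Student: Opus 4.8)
The plan is to argue by contradiction, showing that $R^{(t+1)}$ contains no occurrence of $321$ whose position triple is $\ge_\ell (i_1,i_2,i_3)$, where $i_1i_2i_3=\rmost(R^{(t)},321)$. Write $a=r_{i_1}$ and $b=r_{i_2}$ for the swapped values, so that $a>b>r_{i_3}$; in $R^{(t+1)}$ the position $i_1$ now carries $b$ and the position $i_2$ now carries $a$, while every other entry is unchanged. The single structural fact driving the whole argument is that, since $i_1<i_2$ while $b<a$, the two moved entries now appear in \emph{increasing} order of value when read left to right. First I would record that for any $321$-occurrence $(j_1,j_2,j_3)$ of $R^{(t+1)}$ with $(j_1,j_2,j_3)\ge_\ell(i_1,i_2,i_3)$, the lexicographic inequality forces $j_1\ge i_1$.

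Next I would split into cases according to how many of the two moved positions the occurrence uses. If it uses neither $i_1$ nor $i_2$, then it is literally an occurrence of $321$ in $R^{(t)}$ as well; being $\ge_\ell(i_1,i_2,i_3)$ while $(i_1,i_2,i_3)$ is rightmost in $R^{(t)}$ forces equality, which is absurd since this occurrence avoids the positions $i_1$ and $i_2$. If it uses both $i_1$ and $i_2$, then reading the chosen entries of $R^{(t+1)}$ in increasing position order would require $b>a$ (as $i_1$ precedes $i_2$), contradicting $b<a$; so this case is vacuous. If it uses $i_1$ but not $i_2$, then $j_1\ge i_1$ together with $i_1$ lying in the triple forces $j_1=i_1$, whose $R^{(t+1)}$-value is $b$; replacing $b$ by the larger original value $a$ at the same position produces a genuine $321$ in $R^{(t)}$ at $(i_1,j_2,j_3)$, and here $j_2>i_2$, so this occurrence is strictly $>_\ell(i_1,i_2,i_3)$, contradicting rightmostness.

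The delicate case, and the one I expect to be the main obstacle, is when the occurrence uses $i_2$ but not $i_1$: there $i_2$ now carries the \emph{large} value $a$, so a direct substitution need not preserve the pattern. The trick is to not reuse $i_2$ with its new value, but instead to relocate the pattern onto features of $R^{(t)}$ that survived the swap, namely the large value $a$ still sitting at $i_1$ and the descent $b>r_{i_3}$ still sitting at $(i_2,i_3)$. Splitting on the role of $i_2$ in the pattern, in each subcase one of these surviving features combines with the remaining two indices of the occurrence to yield a $321$-occurrence of $R^{(t)}$ whose first index exceeds $i_1$: one uses $(i_1,j_2,j_3)$ when $i_2$ is the leftmost element, $(j_1,i_2,i_3)$ when $i_2$ is the middle element, and $(j_1,j_2,i_2)$ when $i_2$ is the rightmost element. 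Each such triple is $>_\ell(i_1,i_2,i_3)$ and hence contradicts the rightmostness of $(i_1,i_2,i_3)$ in $R^{(t)}$. Having ruled out every case, $R^{(t+1)}$ admits no $321$-occurrence $\ge_\ell(i_1,i_2,i_3)$, so $\rmost(R^{(t+1)},321)<_\ell\rmost(R^{(t)},321)$, as claimed. I note that this argument is purely about the effect of swapping the rightmost $321$ and does not actually invoke the standing $\widetilde{2}31$-avoidance hypothesis.
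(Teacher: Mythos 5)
Your proof is correct, and it takes a recognizably different route through the same basic strategy. Both you and the paper argue by contradiction, taking a hypothetical $321$-occurrence $(j_1,j_2,j_3)\ge_\ell(i_1,i_2,i_3)$ in $R^{(t+1)}$ and producing from it a $321$-occurrence of $R^{(t)}$ that is $>_\ell(i_1,i_2,i_3)$, contradicting rightmostness. The difference is in the decomposition and the supporting machinery. The paper first derives ``forbidden region'' inequalities from rightmostness (those of Figure~\ref{figure_321_231}: $r^{(t)}_j\le r^{(t)}_{i_2}$ for $i_1<j<i_2$, etc.) and then splits on the position of $j_1$ relative to $i_1$ and $i_2$; in the subcase $i_1<j_1<i_2$ it needs those inequalities to show the occurrence cannot touch $i_2$ and hence survives unchanged in $R^{(t)}$. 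You instead split on which of the two swapped positions the occurrence uses (neither, both, only $i_1$, only $i_2$) and repair each case by a direct witness substitution: forcing equality in the ``neither'' case, the ascent argument in the ``both'' case, raising the value at $i_1$ back to $a$ in your case C, and relocating onto the surviving features ($a$ at position $i_1$ in $R^{(t)}$, the descent $b>r_{i_3}$ at $(i_2,i_3)$) in the three subcases of D. This makes your argument self-contained, needing no preliminary structural constraints; the paper's constraints are not wasted, however, as they are reused in the proof of Lemma~\ref{inverse_gamma}. Your closing observation that the $\widetilde{2}31$-avoidance hypothesis plays no role here is also accurate --- the paper's proof of this lemma does not invoke it either; it only enters in Lemma~\ref{inverse_gamma}. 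One minor slip in your write-up: in case D you say each replacement triple has ``first index exceeding $i_1$,'' which is false in the subcase $j_1=i_2$, where the replacement $(i_1,j_2,j_3)$ has first index exactly $i_1$; but your operative claim --- that each replacement triple is $>_\ell(i_1,i_2,i_3)$ --- holds there as well, via $j_2>j_1=i_2$, so the argument stands.
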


\begin{proof}
  Let $R^{(t)}=r^{(t)}_1 \ldots r^{(t)}_n$ and, similarly,
  $R^{(t+1)}=r^{(t+1)}_1 \ldots r^{(t+1)}_n$. Moreover, let
  $\rmost(R^{(t)},321)=i_1 i_2 i_3$ and $\rmost(R^{(t+1)},321)=j_1 j_2 j_3$.
  Note that, as illustrated in Figure~\ref{figure_321_231}, our hypothesis
  imposes some constraints on the elements of $R^{(t)}$. More precisely,
  $r^{(t)}_j \le r^{(t)}_{i_2}$, for each $j=i_1+1,\dots,i_2-1$. Also, for
  each $j=i_2+1,\dots,i_3-1$, either $r^{(t)}_j \le r^{(t)}_{i_3 }$ or
  $r^{(t)}_j \ge r^{(t)}_{i_1}$. Finally, $r^{(t)}_j \ge r^{(t)}_{i_2}$ for
  each $j>i_3$. We will repeatedly use these inequalities throughout this
  proof. Our goal is now to show that $j_1 j_2 j_3 <_\ell i_1 i_2 i_3$.
  Suppose, by contradiction, that $j_1 j_2 j_3 \ge_\ell i_1 i_2
  i_3$. Consider the following case analysis.
  \begin{itemize}
  \item Suppose $j_1 > i_1$. If $j_1 < i_2$, then necessarily
    $r^{(t+1)}_{j_1}=r^{(t)}_{j_1} \le r^{(t)}_{i_2}$, due to the above
    constraints. Hence we must have $j_2 ,j_3 \neq i_2$, since otherwise the
    indices $j_1 ,j_2 ,j_3$ would not correspond to an occurrence of 321 in
    $R^{(t+1)}$. This implies that
    $r^{(t+1)}_{j_1} r^{(t+1)}_{j_2} r^{(t+1)}_{j_3}=r^{(t)}_{j_1}
    r^{(t)}_{j_2} r^{(t)}_{j_3}$
    is an occurrence of $321$ in $R^{(t)}$ as well, with
    $j_1 j_2 j_3 >_\ell i_1 i_2 i_3$: this is a contradiction, since we are
    assuming that $\rmost(R^{(t)},321)=i_1 i_2 i_3$. Next suppose that
    $j_1=i_2$ (and so $j_2 > i_2$). Note that
    $r^{(t)}_{i_1}=r^{(t+1)}_{i_2}=r^{(t+1)}_{j_1}$, hence
    $r^{(t)}_{i_1} r^{(t)}_{j_2} r^{(t)}_{j_3}$ is an occurrence of $321$ in
    $R^{(t)}$ with $i_1 j_2 j_3 >_\ell i_1 i_2 i_3$, which is
    impossible. Finally, suppose that $j_1>i_2$. Then obviously
    $r^{(t)}_{j_1} r^{(t)}_{j_2} r^{(t)}_{j_3}=r^{(t+1)}_{j_1}
    r^{(t+1)}_{j_2} r^{(t+1)}_{j_3}$
    is an occurrence of $321$ in $R^{(t)}$, with
    $j_1 j_2 j_3 >_\ell i_1 i_2 i_3$, again a contradiction.
  \item Suppose instead that $j_1=i_1$ and $j_2>i_2$. Then
    $r^{(t+1)}_{i_1}=r^{(t)}_{i_2}$ and $j_2 > i_2$, so
    $r^{(t)}_{i_2} r^{(t)}_{j_2} r^{(t)}_{j_3}$ is an occurrence of $321$ in
    $R^{(t)}$, with $i_2 j_2 j_3 >_\ell i_1 i_2 i_3$, which is impossible.
  \item Finally, the case $j_1=i_1$ and $j_2=i_2$ is clearly impossible,
    since we have
    $r^{(t+1)}_{i_1}=r^{(t)}_{i_2} < r^{(t)}_{i_1}=r^{(t+1)}_{i_2}$. \qedhere
  \end{itemize}
\end{proof}

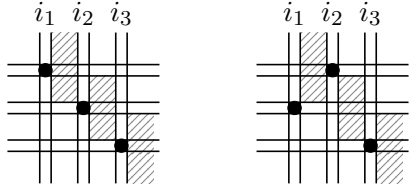
\begin{figure}
  \begin{center}
    \begin{tikzpicture}[scale=0.5, baseline=19pt]
      \fill[NE-lines] (1.15,2.15) rectangle (1.85,4);
      \fill[NE-lines] (2.15,1.15) rectangle (2.85,2.85);
      \fill[NE-lines] (3.15,0) rectangle (3.85,1.85);
      \draw [semithick] (0,0.85) -- (4,0.85);
      \draw [semithick] (0,1.15) -- (4,1.15);
      \draw [semithick] (0,1.85) -- (4,1.85);
      \draw [semithick] (0,2.15) -- (4,2.15);
      \draw [semithick] (0,2.85) -- (4,2.85);
      \draw [semithick] (0,3.15) -- (4,3.15);
      \draw [semithick] (0.85,0) -- (0.85,4);
      \draw [semithick] (1.15,0) -- (1.15,4);
      \draw [semithick] (1.85,0) -- (1.85,4);
      \draw [semithick] (2.15,0) -- (2.15,4);
      \draw [semithick] (2.85,0) -- (2.85,4);
      \draw [semithick] (3.15,0) -- (3.15,4);
      \filldraw (1,3) circle(5pt);
      \filldraw (2,2) circle (5pt);
      \filldraw (3,1) circle (5pt);
      \node[] at (1,4.5){$i_1$};
      \node[] at (2,4.5){$i_2$};
      \node[] at (3,4.5){$i_3$};
    \end{tikzpicture}
    \hspace{1cm}
    \begin{tikzpicture}[scale=0.5, baseline=19pt]
      \fill[NE-lines] (1.15,2.15) rectangle (1.85,4);
      \fill[NE-lines] (2.15,1.15) rectangle (2.85,2.85);
      \fill[NE-lines] (3.15,0) rectangle (3.85,1.85);
      \draw [semithick] (0,0.85) -- (4,0.85);
      \draw [semithick] (0,1.15) -- (4,1.15);
      \draw [semithick] (0,1.85) -- (4,1.85);
      \draw [semithick] (0,2.15) -- (4,2.15);
      \draw [semithick] (0,2.85) -- (4,2.85);
      \draw [semithick] (0,3.15) -- (4,3.15);
      \draw [semithick] (0.85,0) -- (0.85,4);
      \draw [semithick] (1.15,0) -- (1.15,4);
      \draw [semithick] (1.85,0) -- (1.85,4);
      \draw [semithick] (2.15,0) -- (2.15,4);
      \draw [semithick] (2.85,0) -- (2.85,4);
      \draw [semithick] (3.15,0) -- (3.15,4);
      \filldraw (1,2) circle (5pt);
      \filldraw (2,3) circle (5pt);
      \filldraw (3,1) circle (5pt);
      \node[] at (1,4.5){$i_1$};
      \node[] at (2,4.5){$i_2$};
      \node[] at (3,4.5){$i_3$};
    \end{tikzpicture}
  \end{center}
  \caption{On the left, the rightmost occurrence of the pattern $321$ in
    $R^{(t)}$, with indices $i_1 i_2 i_3$. Shaded boxes correspond to
    forbidden regions. On the right, the resulting pattern in $R^{(t+1)}$,
    obtained by exchanging the elements in positions $i_1$ and
    $i_2$.}\label{figure_321_231}
\end{figure}

Next we show that $\gamma$ is a bijection by proving that the recursive
construction defined above can be reversed. More precisely, $R^{(t)}$
can obtained from $R^{(t+1)}$ by transforming the leftmost occurrence of
$\widetilde{2}31$ into an occurrence of $321$ (see
Figure~\ref{diagram_gamma}).

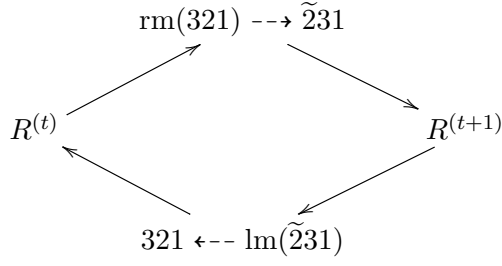
\begin{figure}[h]
  $$
  \xymatrix{
    & \rmost(321) \dashrightarrow \widetilde{2}31 \ar@{<-}[dl] \ar@{->}[dr] & \\
    R^{(t)} \ar@{<-}[dr] & & R^{(t+1)} \ar@{->}[dl] \\
    & 321 \dashleftarrow \lmost(\widetilde{2}31) &
  }
  $$
  \caption{The diagram of
    Lemma~\ref{inverse_gamma}.}\label{diagram_gamma}
\end{figure}

\begin{lemma}\label{inverse_gamma}
  Let $t\ge0$. Let $\rmost(R^{(t)},321)=i_1 i_2 i_3$ and
  $\lmost(R^{(t+1)},\widetilde{2}31)=j_1 j_2 j_3$. Then $i_1=j_1$ and
  $i_2=j_2$.
\end{lemma}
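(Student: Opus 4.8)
The plan is to pin down $\lmost(R^{(t+1)},\widetilde{2}31)$ by first exhibiting one explicit occurrence of $\widetilde{2}31$ in $R^{(t+1)}$ and then showing nothing lies to its left. Write $a=r^{(t)}_{i_1}>b=r^{(t)}_{i_2}>c=r^{(t)}_{i_3}$ for the rightmost $321$ of $R^{(t)}$; after the swap, positions $i_1,i_2,i_3$ of $R^{(t+1)}$ carry $b,a,c$, which is an occurrence of $231$. First I would check that this is in fact an occurrence of $\widetilde{2}31$: since $a$ occurs at position $i_1$ of the {\rgf}~$R^{(t)}$, the {\rgf} property forces all of $1,\dots,a-1$ — in particular $b$ — to occur before position $i_1$, and those positions are untouched by the swap, so $b$ is not the first occurrence of its value in $R^{(t+1)}$. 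Hence $(i_1,i_2,i_3)$ is a $\widetilde{2}31$ of $R^{(t+1)}$, which already gives $j_1 j_2 j_3\le_\ell i_1 i_2 i_3$, so $j_1\le i_1$ and, once we know $j_1=i_1$, also $j_2\le i_2$.

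The reverse inequalities are where the work lies. For $j_2\ge i_2$ (assuming $j_1=i_1$), the first element of the occurrence sits at $i_1$ with value $b$, so its peak would need a value exceeding $b$ strictly between positions $i_1$ and $i_2$; but the constraints recorded in the proof of Lemma~\ref{lex_321} give $r^{(t)}_\ell\le r^{(t)}_{i_2}=b$ for all $i_1<\ell<i_2$, and these entries are unchanged by the swap, ruling out any peak there. Thus $j_2=i_2$ as soon as $j_1=i_1$.

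The main obstacle is proving $j_1\ge i_1$, that is, that $R^{(t+1)}$ has no $\widetilde{2}31$ starting strictly before $i_1$. Here the constraints from Lemma~\ref{lex_321} alone do not suffice — they say nothing about entries to the left of $i_1$ — and one really must use that $R^{(t)}$ arises from the iterative process. I would therefore prove, by induction on $t$, the invariant $(\star_t)$: \emph{$R^{(t)}$ has no occurrence of $\widetilde{2}31$ whose first index is smaller than the first index of $\rmost(R^{(t)},321)$.} The base case $t=0$ is immediate since $R^{(0)}\in\mathcal{R}(\widetilde{2}31)$. For the inductive step I would use Lemma~\ref{lex_321} to get $i_1^{(t+1)}\le i_1^{(t)}$, so it is enough to forbid occurrences of $\widetilde{2}31$ in $R^{(t+1)}$ with first index $p<i_1^{(t)}$; given such an occurrence $(p,q,s)$, a short case analysis on whether $q$ or $s$ equals one of the swapped positions $i_1,i_2$ produces an occurrence of $\widetilde{2}31$ in $R^{(t)}$ with the same first index $p$ (the earlier copy of $r^{(t)}_p$ needed for the ``$\widetilde{2}$'' survives because it sits before $p<i_1$), contradicting $(\star_t)$. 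Applying $(\star_t)$ at the fixed $t$ of the lemma then yields $j_1\ge i_1$, completing the argument. I expect this case analysis to be the delicate point, because the swap genuinely creates new $231$ occurrences — for instance using the displaced large value $a$ now sitting at $i_2$ as a peak, or the displaced value $b$ at $i_1$ as a valley — and each such configuration must be traced back to a forbidden occurrence in $R^{(t)}$.
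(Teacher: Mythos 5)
Your proposal is correct and is essentially the paper's own proof: the paper likewise obtains $j_1j_2j_3\le_\ell i_1i_2i_3$ by exhibiting the swapped triple as an occurrence of $\widetilde{2}31$ (via the {\rgf} property), rules out $j_1=i_1$, $j_2<i_2$ by the region constraints of Figure~\ref{figure_321_231}, and disposes of $j_1<i_1$ by induction on $t$ with exactly your trace-back case analysis on whether the later indices of the offending occurrence meet the swapped positions $i_1,i_2$. The only (immaterial) difference is that your inductive invariant $(\star_t)$ tracks just the first index of the leftmost $\widetilde{2}31$, whereas the paper inducts on the lemma statement itself, which together with Lemma~\ref{lex_321} gives the analogous invariant for the first two indices.
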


\begin{proof}
  We again refer to Figure~\ref{figure_321_231} for an illustration of the
  constraints imposed on the elements of $R^{(t)}$ by the position of the
  rightmost occurrence of $321$ inside $R^{(t)}$. We proceed by induction on
  $t$.

  Suppose first that $t=0$, that is, $R^{(0)}=r^{(0)}_1 \ldots r^{(0)}_n$
  avoids $\widetilde{2}31$, but contains $321$. Set
  $R^{(1)}=r^{(1)}_1 \ldots r^{(1)}_n$, $\rmost(R^{(0)},321)=i_1 i_2 i_3$ and
  $\lmost(R^{(1)},\widetilde{2}31)=j_1 j_2 j_3$. Note that
  $r^{(1)}_{i_1} r^{(1)}_{i_2} r^{(1)}_{i_3}$ is an occurrence of
  $\widetilde{2}31$ in $R^{(1)}$. Indeed, by Lemma~\ref{RGF_prop}, the first
  occurrence of the integer $r^{(0)}_{i_2}$ in $R^{(0)}$ precedes
  $r^{(0)}_{i_1}$, since $r^{(0)}_{i_1}>r^{(0)}_{i_2}$. Therefore
  $j_1 j_2 j_3 \le_{\ell} i_1 i_2 i_3$. We have to show that $i_1=j_1$ and
  $i_2=j_2$. Suppose, to the contrary, that $j_1 <i_1$. If either $j_2=i_1$
  or $j_2=i_2$, then $r^{(0)}_{j_1} r^{(0)}_{i_1} r^{(0)}_{j_3}$ would be an
  occurrence of $\widetilde{2}31$ in $R^{(0)}$, which is impossible since
  $R^{(0)}\in \mathcal{R}(\widetilde{2}31)$. Thus we must have $j_2 \neq i_1$
  and $j_2 \neq i_2$. In particular, since $j_2 \neq i_2$, we must have
  either $j_3=i_1$ or $j_3=i_2$, otherwise
  $r^{(0)}_{j_1} r^{(0)}_{j_2} r^{(0)}_{j_3}=r^{(1)}_{j_1} r^{(1)}_{j_2}
  r^{(1)}_{j_3}$
  would be an occurrence of $\widetilde{2}31$ in $R^{(0)}$ as well. However,
  if either $j_3=i_1$ or $j_3=i_2$, then
  $r^{(0)}_{j_1} r^{(0)}_{j_2} r^{(0)}_{i_2}$ would be an occurrence of
  $\widetilde{2}31$ in $R^{(0)}$, which is again a contradiction. Therefore
  it has to be $i_1=j_1$. Finally, the case $j_1=i_1$ and $j_2<i_2$ is
  forbidden, due to the restrictions depicted in
  Figure~\ref{figure_321_231}. Summing up, we must have $i_1=j_1$ and
  $i_2=j_2$, as desired.

  Now suppose that $t \ge 1$. Let $R^{(t)}=r^{(t)}_1 \ldots
  r^{(t)}_n$. For the rest of this proof, we fix the following
  notation:
  \begin{itemize}
  \item[-] $\rmost(R^{(t-1)},321)=t_1 t_2 t_3$;
  \item[-] $\lmost(R^{(t)},\widetilde{2}31)=s_1 s_2 s_3$;
  \item[-] $\rmost(R^{(t)},321)=i_1 i_2 i_3$;
  \item[-] $\lmost(R^{(t+1)},\widetilde{2}31)=j_1 j_2 j_3$.
  \end{itemize}
  By the inductive hypothesis we have $s_1=t_1$ and $s_2=t_2$. Moreover,
  Lemma~\ref{lex_321} implies that $t_1 t_2 t_3 >_{\ell} i_1 i_2 i_3$, hence
  $t_1 t_2 \ge_{\ell} i_1 i_2$ and $s_1 s_2 \ge_{\ell} i_1 i_2$. Note that
  $r^{(t+1)}_{i_1} r^{(t+1)}_{i_2} r^{(t+1)}_{i_3}$ is an occurrence of
  $\widetilde{2}31$ in $R^{(t+1)}$, so we must have
  $j_1 j_2 j_3 \le_{\ell} i_1 i_2 i_3$. Our goal is to show that $i_1=j_1$
  and $i_2=j_2$. We shall proceed by contradiction, so we assume that
  $j_1 <i_1$ or $j_2 <i_2$. Our strategy consists in finding an occurrence of
  $\widetilde{2}31$ in $R^{(t)}$ such that the indices of its first two
  elements strictly precede $i_1 i_2$ (in the lexicographical order). Indeed,
  this would imply that $s_1 s_2 <_{\ell} i_1 i_2$, since
  $s_1 s_2 s_3 = \lmost(R^{(t)},\widetilde{2}31)$, which is impossible since
  we know that $s_1 s_2 \ge_{\ell} i_1 i_2$.

  Suppose first that $j_1 < i_1$. If
  $\{ j_2 ,j_3 \} \cap \{ i_1 ,i_2 \} =\emptyset$, then
  $r^{(t)}_{j_1} r^{(t)}_{j_2} r^{(t)}_{j_3}$ is the desired occurrence
  of $\widetilde{2}31$ in $R^{(t)}$, since in this case $j_1,j_2,j_3$ are
  not involved in the transition from $R^{(t)}$ to $R^{(t+1)}$ and we
  are assuming that $j_1 < i_1$. Therefore at least one of $j_2$
  and $j_3$ must coincide with either $i_1$ or $i_2$. We will now show
  that, in each case, we are able to find an occurrence of $\widetilde{2}31$
  in $R^{(t)}$ with the desired property.
  \begin{itemize}
  \item If $j_2=i_1$, then
    $r^{(t+1)}_{j_2}=r^{(t+1)}_{i_1}<r^{(t)}_{i_1}$, hence
    $r^{(t)}_{j_1} r^{(t)}_{j_2} r^{(t)}_{j_3}$ is an occurrence of
    $\widetilde{2}31$ in $R^{(t)}$, and $j_1 j_2 <_{\ell} i_1 i_2$.
  \item If $j_2=i_2$, then $r^{(t)}_{j_1} r^{(t)}_{i_1} r^{(t)}_{j_3}$
    is an occurrence $\widetilde{2}31$ in $R^{(t)}$, and
    $j_1 i_1 <_{\ell} i_1 i_2$.
  \item If $j_3=i_1$, then $r^{(t)}_{j_1} r^{(t)}_{j_2} r^{(t)}_{i_2}$
    is an occurrence of $\widetilde{2}31$ in $R^{(t)}$, and
    $j_1 j_2 <_{\ell} i_1 i_2$.
  \item If $j_3=i_2$, then
    $r^{(t)}_{i_2} < r^{(t)}_{i_1} = r^{(t+1)}_{i_2}$, hence
    $r^{(t)}_{j_1} r^{(t)}_{j_2} r^{(t)}_{i_2}$ is an occurrence of
    $\widetilde{2}31$ in $R^{(t)}$, and $j_1 j_2 < i_1 i_2$.
  \end{itemize}
  The above case-by-case analysis shows that $i_1=j_1$. Moreover, we cannot
  have $j_2 < i_2$; this is again due to the restrictions illustrated in
  Figure~\ref{figure_321_231}.
\end{proof}

\begin{theorem}\label{bij_12321_12231}
  The map $\gamma$ is a size-preserving bijection between
  $\mathcal{R}(12321)$ and $\mathcal{R}(12231)$. Moreover, $\gamma$
  preserves the maximum value of a  {\rgf}.
\end{theorem}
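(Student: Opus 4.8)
The plan is to dispatch the easy structural claims first and then focus all effort on bijectivity. Since $\mathcal{R}(12231)=\mathcal{R}(\widetilde{2}31)$ and $\mathcal{R}(12321)=\mathcal{R}(321)$, I would treat $\gamma$ as a map from $\mathcal{R}(\widetilde{2}31)$ to $\mathcal{R}(321)$. By construction $\gamma(R)=R^{(k)}$ avoids $321$, and Lemma~\ref{lex_321} guarantees that $k$ exists: the triple $\rmost(R^{(t)},321)$ strictly decreases in $<_{\ell}$, and since it ranges over a finite set the process must terminate. Thus $\gamma$ is well defined with image in $\mathcal{R}(321)$. Both size- and maximum-preservation are then immediate, because every step merely transposes two entries, leaving the length and the multiset of letters — and in particular the largest letter — unchanged, so $|\gamma(R)|=|R|$ and $\max(\gamma(R))=\max(R)$. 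Combined with Corollary~\ref{enumer_12321}, this maximum-preservation also transfers the distribution by largest letter from $\mathcal{R}(12321)$ to $\mathcal{R}(12231)$.

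To prove that $\gamma$ is a bijection I would exhibit a two-sided inverse $\delta$ defined by running the construction backwards: given $S\in\mathcal{R}(321)$, set $S^{(0)}=S$ and, as long as $S^{(t)}$ contains $\widetilde{2}31$, obtain $S^{(t+1)}$ by swapping the first two entries of the leftmost occurrence $\lmost(S^{(t)},\widetilde{2}31)$; put $\delta(S)=S^{(m)}$ for the first $m$ with $S^{(m)}\in\mathcal{R}(\widetilde{2}31)$. That each $S^{(t)}$ is again a {\rgf}, and that $\delta$ is size- and maximum-preserving, follow exactly as for $\gamma$. Termination of $\delta$ requires the dual of Lemma~\ref{lex_321}, namely that $\lmost(S^{(t)},\widetilde{2}31)$ strictly \emph{increases} in $<_{\ell}$ at each backward step; this I would prove by the mirror image of the case analysis used for Lemma~\ref{lex_321}.

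The heart of the argument is that a forward step and a backward step undo each other, which is precisely Lemma~\ref{inverse_gamma}. Running $\gamma$ on $R$ produces a sequence $R=R^{(0)},R^{(1)},\dots,R^{(k)}=\gamma(R)$, and I would show that applying $\delta$ to $R^{(k)}$ retraces it in reverse. Indeed, the passage $R^{(t)}\to R^{(t+1)}$ swaps the positions $i_1,i_2$ of $\rmost(R^{(t)},321)$, and Lemma~\ref{inverse_gamma} asserts that $\lmost(R^{(t+1)},\widetilde{2}31)$ has the \emph{same} first two indices $i_1,i_2$; since a transposition of two fixed positions is involutive, the corresponding $\delta$-step sends $R^{(t+1)}$ back to $R^{(t)}$. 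Because each $R^{(t)}$ with $t\ge1$ contains $\widetilde{2}31$ (at the positions $i_1i_2i_3$ created in the previous step, as noted in the proof of Lemma~\ref{inverse_gamma}) while $R^{(0)}=R$ avoids $\widetilde{2}31$, the $\delta$-process starting from $R^{(k)}$ halts exactly at $R^{(0)}=R$. Hence $\delta\circ\gamma=\mathrm{id}$, so $\gamma$ is injective. The identity $\gamma\circ\delta=\mathrm{id}$ is obtained symmetrically from the dual of Lemma~\ref{inverse_gamma} (a backward step followed by a forward step is the identity), and together these show $\gamma$ and $\delta$ are mutually inverse.

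I expect the main obstacle to be the reverse direction, that is, establishing the duals of Lemmas~\ref{lex_321} and~\ref{inverse_gamma} for the process $\delta$. Although they mirror the forward proofs, the two patterns are not perfectly symmetric: $321$ is an ordinary pattern, whereas $\widetilde{2}31$ carries the extra requirement that its first element not be a ltr-maximum. One must therefore verify that in a backward step the newly created occurrence of $321$ is genuine and is in fact the lexicographically rightmost one, and that the non-ltr-maximum condition — supplied for free in the forward direction by Lemma~\ref{RGF_prop} — does not cause the two processes to drift apart. Checking this compatibility, and that every intermediate sequence stays a {\rgf}, is the delicate part; by contrast the size- and maximum-preservation are essentially automatic.
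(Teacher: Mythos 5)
Your proposal follows the same route as the paper — in fact the paper states Theorem~\ref{bij_12321_12231} with no separate proof at all, treating it as an immediate consequence of Lemmas~\ref{lex_321} and~\ref{inverse_gamma}, and your skeleton (well-definedness from Lemma~\ref{lex_321}, size- and maximum-preservation because every step is a transposition of two entries, invertibility from Lemma~\ref{inverse_gamma}) is exactly that implicit argument. Your half of the argument giving $\delta\circ\gamma=\mathrm{id}$ is complete and correct: Lemma~\ref{inverse_gamma} identifies the swapped positions of $\rmost(R^{(t)},321)$ with the first two positions of $\lmost(R^{(t+1)},\widetilde{2}31)$, each $R^{(t)}$ with $t\ge1$ contains $\widetilde{2}31$ while $R^{(0)}$ avoids it, so the backward process started at $\gamma(R)$ halts precisely at $R$. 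This yields injectivity of $\gamma$ using only what the paper proves.

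The gap is exactly where you locate it: $\gamma\circ\delta=\mathrm{id}$. Lemma~\ref{inverse_gamma} is proved by induction along a forward trajectory whose starting point avoids $\widetilde{2}31$ (the base case uses this hypothesis explicitly), so it cannot be invoked for an arbitrary $S\in\mathcal{R}(12321)$; one needs the genuinely dual statement that swapping the first two entries of $\lmost(S,\widetilde{2}31)$ yields a sequence whose \emph{rightmost} occurrence of $321$ sits at those same two positions, and, as you observe, the asymmetry between the classical pattern $321$ and the pattern $\widetilde{2}31$ (with its non-ltr-maximum condition) means this is not a literal mirror image — it requires its own case analysis, which your proposal defers. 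Without it you have an injection $\gamma$ and a map $\delta$, but no proof that $\delta(S)$ is a preimage of $S$, hence no surjectivity; and a counting shortcut (injectivity plus $|\mathcal{R}_n(12231)|=|\mathcal{R}_n(12321)|$ from~\cite{JM}) would defeat the section's stated purpose of an independent enumeration. To be fair, the paper is silent on this very point, so your proposal is no less rigorous than the published treatment, and indeed more explicit about the logical structure; but as a self-contained proof it remains incomplete until the dual lemma is actually proved. One simplification you can make: termination of $\delta$ does not need a dual of Lemma~\ref{lex_321}, since each backward step swaps an earlier, smaller entry with a later, larger one and therefore strictly increases the number of inversions, which is bounded above; only the dual of Lemma~\ref{inverse_gamma} is truly missing.
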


By Theorem~\ref{bij_12321_12231} and Corollary~\ref{enumer_12321}, the
distribution of the maximum letter in {\rgfs} over $\mathcal{R}_n(12231)$ is
given by $\sum_{i=k}^{n} \binom{n}{i} \mathfrak{n}_{i,k}$. This provides a
combinatorial (even if not direct) proof of the formula stated in Open
Problem~\ref{open_prob_distr} for the distribution of ltr-minima of
$\Sort(132)$.

\section{Final remarks and future work}\label{section_final_remarks}

In Sections~\ref{section_mesh} and~\ref{section_grid} we have characterized
the elements of the set $\Sort(132)$, thus solving one of the open problems
for pattern-avoiding machines introduced in~\cite{CCF}. For three
remaining patterns $\sigma$ of length $3$, namely $213$, $231$ and $312$, a
characterization of the $\sigma$-sortable permutations remains to be found,
as well as their enumeration. The pattern $231$ seems to be significantly
more challenging than the others. This is arguably due to what seems to be
the case, according to computational evidence, namely that the 231-machine
can sort more permutations of length $n$, for each $n\ge3$, than the machines
associated to any other pattern of length $3$ (in particular, it is the only
one that can sort every permutation of length $3$).

The enumeration of $132$-sortable permutations has been obtained by means of
a bijection with {\rgfs} avoiding $12231$, whose enumeration can be found
in~\cite{JM} as an application of a much more general mechanism. In
Section~\ref{section_enum} we have found new direct proofs for related
classes of {\rgfs}, exhibiting connections with other well known
combinatorial objects, such as lattice paths and pattern-avoiding
permutations. In particular, the bijection with labeled Motzkin paths in
Theorem~\ref{bij_Motzkin} seems amenable to being extended and generalized,
in order to cover the enumeration of many patterns in the same
Wilf-equivalence class.

\end{document}